\newcommand{\ubar}[1]{\underaccent{\bar}{#1}}
\def\dbE{\mathbb{E}}
\def\dbF{\mathbb{F}}
\def\dbN{\mathbb{N}}
\def\dbP{\mathbb{P}}
\def\dbR{\mathbb{R}}
\def\dbQ{\mathbb{Q}}
\def\cA{{\cal A}}
\def\cC{{\cal C}}
\def\cD{{\cal D}}
\def\cF{{\cal F}}
\def\fF{{\cal F}}
\def\cK{{\cal K}}
\def\cL{{\cal L}}
\def\cP{{\cal P}}
\def\cT{{\cal T}}
\def\cW{{\cal W}}
\def\fF{{\mathfrak{F}}}
\def\a{\alpha}
\def\d{\delta}
\def\e{\varepsilon}
\def\l{\lambda}
\def\si{\sigma}
\def\t{\tau}
\def\f{\varphi}
\def\cd{\cdot}
\def\rc{{\rm rc}}
\def\syc{{\rm sc}}
\def\ol{\bar}
\def\ul{\ubar}
\def\ie{\textit{i.e.}}
\def\q{\quad}
\DeclareMathOperator*{\argmin}{\arg\!\min}
\DeclareMathOperator{\Expect}{\mathbb E}
\newcommand{\be}{\begin{equation}}
\newcommand{\ee}{\end{equation}}
\newcommand{\bee}{\begin{equation*}}
\newcommand{\eee}{\end{equation*}}
\newcommand{\bea}{\begin{eqnarray}}
\newcommand{\eea}{\end{eqnarray}}
\newcommand{\beaa}{\begin{eqnarray*}}
\newcommand{\eeaa}{\end{eqnarray*}}
\newtheorem{thm}{Theorem}[section]
\newtheorem{lem}[thm]{Lemma}
\newtheorem{cor}[thm]{Corollary}
\newtheorem{prop}[thm]{Proposition}
\newtheorem{rem}[thm]{Remark}
\newtheorem{defn}[thm]{Definition}
\newtheorem{assum}[thm]{Assumption}
\crefname{thm}{theorem}{theorems}
\begin{document}

\title{Mean Field Optimization Problem Regularized by Fisher Information}

\author{Julien CLAISSE \footnote{Ceremade, Universit\'e Paris-Dauphine, PSL Research University, 75016 Paris, France (\it{claisse@ceremade.dauphine.fr}).}
\and Giovanni CONFORTI \footnote{Centre de Math\'ematiques Appliqu\'ees, Ecole Polytechnique, IP Paris, 91128 Palaiseau Cedex, France (\it{giovanni.conforti@polytechnique.edu}).}
\and Zhenjie REN \footnote{Ceremade, Universit\'e Paris-Dauphine, PSL Research University, 75016 Paris, France (\it{ren@ceremade.dauphine.fr}).}
\and Songbo WANG \footnote{Centre de Math\'ematiques Appliqu\'ees, Ecole Polytechnique, IP Paris, 91128 Palaiseau Cedex, France (\it{songbo.wang@polytechnique.edu}).}
}



\maketitle

\renewcommand {\theequation}{\arabic{section}.\arabic{equation}}
\def\thesection{\arabic{section}}

\numberwithin{equation}{section}
\numberwithin{thm}{section}

\begin{abstract}
Recently there is a rising interest in the research of mean field optimization, in particular because of its role in analyzing the training of neural networks. In this paper by adding the Fisher Information as the regularizer, we relate the  regularized mean field optimization problem to a so-called mean field Schr\"odinger (MFS for short) dynamics. We develop an energy-dissipation method to show that the marginal distributions of the MFS dynamics converge exponentially quickly towards the unique minimizer of the regularized optimization problem. Remarkably, the MFS dynamics is  proved to be a gradient flow on the probability measure space with respect to the relative entropy. Finally we propose a Monte Carlo method to sample the marginal distributions of the MFS dynamics. 
\end{abstract}

\section{Introduction}

Recently the mean field optimization problem, namely 
\[\inf_{p\in \cP} \fF(p), \quad\mbox{for a function $\fF:\cP\rightarrow\dbR$, where $\cP$ is a set of probability measures},\]
attracts increasing attention, in particular because of its role in analysing the training of artificial neural networks. The Universal Representation Theorem (see e.g. \cite{Hor91}) ensures that a given continous function $f:\dbR^d\rightarrow \dbR$ can be approximated by the parametric form:
\[f(x)  \approx \sum_{i=1}^N c_i \f(a_i\cdot x + b_i),
\quad\mbox{with $c_i\in \dbR, ~ a_i\in \dbR^d, ~b_i\in \dbR$ for $1\le i\le N$},\]
where $\f$ is a fixed non-constant, bounded, continuous activation function. This particular parametrization is called a two-layer neural network (with one hidden layer). In order to train the optimal parameters, one need to solve the optimization problem:
\[\inf_{(c_i, a_i, b_i)_{1\le i\le N}} \sum_{j=1}^M L\left(f(x_j), \sum_{i=1}^N c_i \f(a_i\cdot x_j + b_i)\right),\]
where $L: (y,z) \mapsto L(y,z)$ is a loss function, typically convex in $z$. Here we face an overparametrized, non-convex optimization, and have no theory for an efficient solution. However it has been recently observed (see e.g. \cite{mei2019mean, HRSS19, chizat2018global, KRTY20}) that by lifting the optimization problem to the space of probability measures, namely   
\[\inf_{p\in \cP}\sum_{j=1}^M L\Big(f(x_j), \dbE^p[C\f(A\cdot x +B)] \Big), \]
with random variables $(C, A, B)$ taking values in $\dbR\times\dbR^d\times\dbR$ following the distribution $p$, one makes the optimization convex (the function $F: p\mapsto \sum_{j=1}^M L\big(f(x_j), \dbE^p[C\f(A\cdot x +B)] \big) $ is convex), and has extensive tools to find the minimizers. 

Unlike in \cite{chizat2018global} where the authors address the mean field optimization directly, in \cite{mei2019mean, HRSS19} the authors add the entropy regularizer $H(p):=\int p(x) \log p(x) dx $, that is, they aim at solving the regularized optimization problem:
\begin{equation}\label{eq:intro_entropy_reg}
\inf_{p\in \cP} F(p) + \frac{\si^2}{2} H(p).
\end{equation}
Recall the definition of the linear derivative $\frac{\d F}{\d p}$ and the intrinsic derivative $D_p F$ (see \Cref{rem:intrinsic derv} below) for functions on the space of probability measures. In \cite{HRSS19} the authors introduce the mean field Langevin (MFL for short) dynamics:
\[dX_t = - D_p F(p_t, X_t) dt + \si dW_t,\]
where $p_t = {\rm Law}(X_t)$ and $W$ is a standard Brownian motion, and prove that the marginal laws $(p_t)_{t\ge 0}$ of the MFL dynamics converge towards the minimizer of the entropic regularization \eqref{eq:intro_entropy_reg}. In the following works \cite{NWS22, Chizat22} it has been shown that the convergence is exponentially quick. 

In this paper we try to look into the mean field optimization problem from another perspective, by adding the Fisher information $I(p):= \int |\nabla \log p(x)|^2 p(x)dx$ instead of the entropy as the regularizer, namely solving the regularized optimization
\[\inf_{p\in \cP} \fF^\si(p)  , \quad \fF^\si(p):= F(p) +\frac{\si^2}{4}I(p).\]
By convexity and calculus of variation (see \Cref{thm:foc}), it is not hard to see that $p^*\in \argmin_{p\in \cP} \fF^\si(p)$ if 
\begin{equation}\label{eq:intro-FOE}
    \frac{\d \fF^\si}{\d p}(p^*, x) := \frac{\delta F}{\delta p} \left(p^*, x\right) - \frac{\sigma^2}{4} \left( 2 \Delta \log p^* + \left|\nabla\log p^*\right|^2 \right)= {\rm constant}.
\end{equation} 
We shall introduce the mean field Schr\"odinger (MFS for short) dynamics:
\[\partial_t p_t = -  \frac{\d \fF^\si}{\d p}(p_t, \cdot) p_t, \]
prove its wellposedness and show that its marginal distributions $(p_t)_{t\ge 0}$ converges (uniformly) towards the minimizer of the free energy function $\fF^\si$. One crucial observation is that the free energy function decays along the MFS dynamics:
\[\frac{d \fF^\si(p_t)}{dt} =  - \int \left|\frac{\d \fF^\si}{\d p}(p_t, x)\right|^2 p_t(dx).\]
In order to prove it rigorously, we develop a probabilistic argument (coupling of diffusions) to estimate $(\nabla \log p_t,\nabla^2 \log p_t)_{t\ge 0}$. Remarkably, the estimate we obtain is uniform in time. Using the energy dissipation we can show that $(p_t)_{t\ge 0}$ converges  exponentially quickly with help of the convexity of $F$ and the Poincar\'e inequality. Another main contribution of this paper is to show that the MFS dynamics is a gradient flow of the free energy function $\fF^\si$ on the space of probability measures, provided that the `distance' between the probability measures is measured by relative entropy. Finally it is noteworthy that MFS dynamics  is numerically implementable, and we shall briefly propose a Monte Carlo simulation method. 

\paragraph{Related works.} Assume $F$ to be linear, i.e. $F(p):= \int f(x) p(dx)$ with a real potential function $f$ and denote the wave function by $\psi:= \sqrt{p}$. Then the function $\fF^\si$ reduces to the conventional energy function in quantum mechanics, composed of the potential energy $\langle \psi, f \psi\rangle_{L^2}$ and the kinetic energy $\si^2\langle \nabla \psi,\nabla \psi\rangle_{L^2}.$ Meanwhile, the MFS dynamics is reduced to the semigroup generated by the Schr\"odinger operator:
\begin{equation}\label{eq:intro-Schr\"odinger eq}
\partial_t \psi = - \mathcal{H} \psi, \quad\text{with } \mathcal{H}:= -\frac{\si^2}{2} \Delta + \frac12 f.
\end{equation}
The properties of the classical Schr\"odinger operator, including its longtime behavior, have been extensively studied in the literature, see e.g. the monographs \cite{RS1234, Lewin22}.
There are also profound studies in  cases where $F$ is nonlinear, notably the density functional theory \cite{ED11, Eschrig}.  However, to our knowledge there is no literature dedicated to the category of convex potential $F:\cP\rightarrow \dbR$, and studying the longtime behavior of such nonlinear Schr\"odinger operator by exploiting the convexity. In addition, the probabilistic nature of our arguments seems novel. 

Using the change of variable: $u:=-\log p^*$, the first order equation \eqref{eq:intro-FOE} can be rewritten as
\[\frac{\si^2}{2}\Delta u -\frac{\si^2}{4}|\nabla u|^2 + \frac{\d F}{\d p}(p^*, x)=\mbox{\rm constant}.\]
So the function $u$ solves an ergodic Hamilton-Jacobi-Bellman equation, and its gradient $\nabla u$ is the optimal control for the ergodic stochastic control problem:
\begin{gather*}
    \lim_{T\rightarrow\infty} \frac1T \sup_\a \dbE\left[\int_0^T \left(\frac12|\a_t|^2 + \frac{2}{\si^2}\frac{\d F}{\d p}(p^*, X^\a_t)\right)dt\right], 
    \intertext{where}
     dX^\a_t = \a_t dt + \sqrt{2}dW_t.
\end{gather*}
Further note that the probability $p^*=e^{-u}$ coincides with the invariant measure of the optimal controlled diffusion: $dX^*_t = -\nabla u(X^*_t)dt + \sqrt{2}dW_t$, so that $p^*$ is the Nash equilibrium of the corresponding ergodic mean field game. For more details on the ergodic mean field game, we refer to the seminal paper \cite{lasry2007}, and for more general mean field games we refer to the recent monographs \cite{CarmonaDelarueMFGBook1, CarmonaDelarueMFGBook2}. Our convergence result of the MFS dynamics $(p_t)_{t\ge 0}$ towards $p^*$ offers an approximation to the equilibrium of the ergodic mean field game. 

Our result on the gradient flow, as far as we know, is new to the literature. It is well known to the community of computational physics that the normalized solution $(\psi_t)_{t\ge 0}$ to the imaginary time Schr\"odinger equation \eqref{eq:intro-Schr\"odinger eq} is the gradient flow of the free energy $\fF^\si$ on the $L^2$-unit ball. On the other hand, in \cite{RJBV19} the authors discuss the (linear) optimization problem without Fisher information regularizer, and formally show that the dynamics,
\(\partial_t p  = - f p,\)
is the gradient flow of the potential functional $\int f dp$ on the  space of probability measures provided that the distance between the measures are measured by the relative entropy. Inspired by these works, we prove in the current paper that the solution to the variational problem:
\[ p^h_{i+1} : = \argmin_{p\in\cP} \left\{ \fF^\si(p) + h^{-1} H(p | p^h_i)\right\}, \quad\mbox{for $h>0, ~i\ge 0$},\]
converges to the continuous-time flow of the MFS dynamics as $h\rightarrow 0$. This result can be viewed as a counterpart of seminal paper \cite{JKO98} on the Wasserstein-$2$ gradient flow. 
\vspace{5mm}

The rest of the paper is organized as follows. In \Cref{sec:main result} we formulate the problem and state the main results of the paper. The proofs are postponed to the subsequent sections. In \Cref{sec:mfSchr\"odinger}, we show that the MFS dynamic is well-defined and admits an important decomposition as the exponential of a sum of a convex and a Lipschitz function. Then we study the long time behavior of this dynamic in \Cref{sec:mfconvergence}  and we prove that it converges exponentially fast to the unique minimizer of the mean field optimization problem regularized by Fisher information.  Finally we establish in \Cref{sec:gradientflow}  that the MFS dynamic corresponds to the gradient flow with respect to the relative entropy. Some technical results including a refined reflection coupling result are also gathered in Appendix.

\paragraph{Notations} 
(i) For each \(T >0\), we denote by \(Q_T = \left(0, T\right] \times \mathbb R^d, ~\bar Q_T = \left[0, T\right] \times \mathbb R^d\) and by $C^n(Q_T)$ the set of functions $f$ such that $\partial_t^k\nabla^m f$ is continuous on $Q_T$ for $2k+m\le n$.  In the case $T=+\infty,$ we simply write \(Q = \left(0, \infty\right) \times \mathbb R^d, ~\bar Q = \left[0, +\infty\right) \times \mathbb R^d.\)\\
(ii) Given a measure $\mu$ on $\dbR^d,$ let  $W^{k,p}(\mu)$ be the Sobolev space of functions $f:\dbR^d\to\dbR$ such $f\in L^p(\mu)$ and $\nabla^l f\in L^p(\mu)$ for all $l\leq p.$ In particular, we denote $H^1(\mu) := W^{1,2}(\mu).$ We simply write $W^{k,p}$ and $H^1$ when $\mu$ is the Lebesgue measure. \\
(iii) Let $\cP_p(\dbR^d)$ be the collection of distribution on $\dbR^d$ with finite first $p$ moments. It is equipped with $\cW_p$ the Wasserstein distance of order $p.$ \\
(iv) Given $u:\dbR^d\to\dbR,$ we consider the functional norms $\| u \|_{(2)}:= \sup_{x\in \dbR^d} \frac{|u(x)|}{1+|x|^2}$  and $\| u \|_{\infty}:= \sup_{x\in \dbR^d} |u(x)|.$

\section{Main Results}\label{sec:main result}

\subsection{Free Energy with Fisher Information}

Denote by $\cP_2(\dbR^d)$ the set of all probability measures on $\dbR^d$ with finite second moments, endowed with $\cW_2$ the Wasserstein distance of order $2$. 
We focus on the probability measures admitting densities, and denote the density of $p\in \cP_2(\dbR^d)$ still by $p : \dbR^d \rightarrow \dbR$ if it exists. In particular we are interested in the probability measures of density satifying:
\[
\cP_H := \left\{ p\in \cP_2(\dbR^d): ~ \sqrt{p} \in H^1 \right\}.
\]
In this paper we study a regularized mean field  optimization problem, namely, given a potential function $F:\cP_2(\dbR^d) \rightarrow \dbR$ we aim at solving
\begin{equation}
\label{eq:definition-free energy}
\inf_{p \in \cP_H } ~ \fF^\si(p), \q\mbox{with} \q \fF^\si(p): = F(p) + \si^2 I (p),
\end{equation}
where $\sigma>0$ and  $I$ is the Fisher information defined by
\begin{equation}
 I(p) := \int_{\dbR^d} |\nabla \sqrt p(x)|^2 dx.
\end{equation}
In the literature, $\fF^\si$ is called the Ginzburg--Landau energy function with temperature $\si$. Note that for $p\in \cP_H$ and $p>0,$ it holds
\[
4 \int_{\dbR^d} |\nabla \sqrt p(x)|^2 dx= \int_{\dbR^d} \left| \nabla \log p(x) \right|^2 p(x)d x.
\]
Throughout the paper, we assume that the potential function $F$ is smooth, convex and coercive as stated in the following assumption.

\begin{defn}
We say that a function $F: \cP_2(\dbR^d)\rightarrow\dbR$ is $\cC^1$ if there exist $\frac{\d F}{\d p}: \cP_2(\dbR^d)\times \dbR^d \rightarrow\dbR$ continuous with quadratic growth in the second variable such that for all $p, q\in \cP_2(\dbR^d),$
\[F(q)-F(p) = \int_0^1 \int_{\dbR^d} \frac{\d F}{\d p}\big(t q+(1-t)p, x\big) (q-p)(dx) dt. \]
\end{defn}

\begin{rem}\label{rem:intrinsic derv}\
Note that $F\in \cC^1$ is $\cW_2$-continuous and $\frac{\d F}{\d p}$ is defined up to constant. We call $\frac{\d F}{\d p}$ the linear derivative and we may further define the intrinsic derivative $D_p F(p,x):= \nabla \frac{\d F}{\d p}(p,x)$.  
\end{rem}

\begin{assum}\label{assum:potential}
Assume that $F$ is $\cC^1,$ convex and
\[
F(p) \ge \l \int_{\dbR^d} |x|^2 p(dx) \q \mbox{for some $\l >0$.}
\]
\end{assum}

The following proposition states that the bias caused by the regularizer vanishes as the temperature $\si\rightarrow 0$. It ensures that the Fisher information is efficient as regularizer in this mean field  optimization problem.

\begin{prop}
\label{thm:gamma-convergence}
It holds
\[
\lim_{\si\rightarrow 0} \left( \inf_{p \in \cP_H } ~ \fF^\si(p) \right) = \inf_{p \in \mathcal P_2 } ~ F(p).
\]
\end{prop}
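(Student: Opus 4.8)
\emph{Proof strategy.} The plan is to reduce the statement to the $\cW_2$-density of $\cP_H$ in $\cP_2(\dbR^d)$. First I would observe that, since $I\ge 0$, for each fixed $p$ the map $\si\mapsto F(p)+\si^2 I(p)$ is nondecreasing, hence so is $\si\mapsto\inf_{p\in\cP_H}\fF^\si(p)$; consequently the limit as $\si\rightarrow 0$ exists and equals $\inf_{\si>0}\inf_{p\in\cP_H}\fF^\si(p)$. Next I would exchange the two infima (everything is nonnegative, since $\fF^\si(p)\ge F(p)\ge\l\int|x|^2p(dx)\ge 0$ by \Cref{assum:potential}). Because $p\in\cP_H$ forces $\sqrt p\in H^1$, hence $\nabla\sqrt p\in L^2$ and $I(p)=\int|\nabla\sqrt p|^2<\infty$, we have $\inf_{\si>0}\big(F(p)+\si^2 I(p)\big)=F(p)$ for every $p\in\cP_H$, and therefore
\[
\lim_{\si\rightarrow 0}\Big(\inf_{p\in\cP_H}\fF^\si(p)\Big)=\inf_{p\in\cP_H}F(p).
\]
Since $\cP_H\subseteq\cP_2(\dbR^d)$, the inequality $\inf_{\cP_H}F\ge\inf_{\cP_2(\dbR^d)}F$ is immediate, so the whole statement boils down to proving $\inf_{p\in\cP_H}F(p)\le\inf_{p\in\cP_2(\dbR^d)}F(p)$.

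For that I would use Gaussian mollification. Fix $p\in\cP_2(\dbR^d)$, let $\gamma_\e$ denote the centred Gaussian with covariance $\e^2 I_d$, and set $p_\e:=p*\gamma_\e$. Then $p_\e$ has a smooth, strictly positive density; it has finite second moment (bounded by $2\int|x|^2 p(dx)+2d\e^2$); and, coupling $X\sim p$ with $X+Z$ for an independent $Z\sim\gamma_\e$, one gets $\cW_2(p_\e,p)\le\sqrt d\,\e$. The technical point is that $I(p_\e)<\infty$: writing $p_\e(x)=\int\gamma_\e(x-y)\,p(dy)$, differentiating under the integral, and invoking the joint convexity of $(a,b)\mapsto|b|^2/(4a)$ on $(0,\infty)\times\dbR^d$ together with Jensen's inequality for the probability measure $p(dy)$, one obtains
\[
I(p_\e)=\int_{\dbR^d}\frac{|\nabla p_\e(x)|^2}{4p_\e(x)}\,dx\le\int_{\dbR^d}I\big(\gamma_\e(\cdot-y)\big)\,p(dy)=I(\gamma_\e)<\infty .
\]
Hence $\sqrt{p_\e}\in L^2$ and $\nabla\sqrt{p_\e}=\nabla p_\e/(2\sqrt{p_\e})\in L^2$, so $p_\e\in\cP_H$. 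Since $F\in\cC^1$ is $\cW_2$-continuous (see \Cref{rem:intrinsic derv}) and $\cW_2(p_\e,p)\to 0$, it follows that $F(p_\e)\to F(p)$, whence $\inf_{\cP_H}F\le F(p)$; taking the infimum over $p\in\cP_2(\dbR^d)$ closes the argument.

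The proof is short, and the only slightly delicate step is the finiteness of $I(p_\e)$ for an \emph{arbitrary} $p\in\cP_2(\dbR^d)$: a priori the mollified density could have a very steep logarithmic gradient, and the clean way around this is the convexity of the Fisher information functional used above. Should one prefer to avoid invoking that convexity, an alternative is to first approximate $p$ in $\cW_2$ by a compactly supported measure and only then mollify, this time estimating $|\nabla p_\e|^2/p_\e$ directly from the explicit Gaussian kernel. Everything else — the monotonicity in $\si$, the exchange of infima, and the $\cW_2$-continuity of $F$ — is routine.
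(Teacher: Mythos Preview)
Your proof is correct and rests on the same core idea as the paper's: approximate an arbitrary $p\in\cP_2$ by a Gaussian mollification lying in $\cP_H$, then invoke the $\cW_2$-continuity of $F$. There are two minor differences in execution. First, you cleanly reduce the statement to the identity $\inf_{\cP_H}F=\inf_{\cP_2}F$ via monotonicity in $\si$ and exchange of infima, whereas the paper carries both $\si$ and the approximation parameter together in a single $\e$-argument. Second, for the finiteness of $I(p_\e)$ you use the joint convexity of $(a,b)\mapsto|b|^2/a$ and Jensen's inequality, which handles an arbitrary $p\in\cP_2$ directly; the paper instead truncates $p$ to a compact set first and then mollifies, so that $I(p_{K,\d})<\infty$ follows by direct computation on a compactly supported measure. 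The alternative you sketch at the end---truncate, then mollify---is exactly the paper's route.
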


\begin{proof}
Given $\varepsilon>0,$ let \(p \in \mathcal P_2\) be such that \(F\left(p\right) < \inf_{p \in \mathcal P_2} F\left(p\right) +
\varepsilon\). 
By truncation and mollification, define $p_{K,\d} := p_K * \varphi_\delta$ where \(p_K := \frac{p \mathbbm 1_{|x| \leq K}}{p(|x| \leq K)}\) and  \(\varphi_\delta (x) := \frac{1}{(2\pi \delta)^{\frac d2}} \exp (- \frac{|x|^2}{2\delta})\). 
It is clear that $p_{K,\d}$ converges to $p$ in $\cW_2$ as $K\to\infty$ and $\delta\to 0.$ Additionally, one easily checks by direct computation that $I(p_{K,\d})<+\infty.$
By $\cW_2$--continuity of $F,$ we deduce by choosing $K$ large and $\delta$ small enough that 
\begin{equation*}
\inf_{p \in \mathcal P_H} \mathfrak F^\sigma \left(p\right) \leq F\left(p_{K,\d}\right) + \frac{\sigma^2}{2} I\left(p_{K,\d}\right) \leq F\left(p\right) + \varepsilon+ \frac{\sigma^2}{2} I\left(p_{K,\d}\right)  \leq \inf_{p \in \mathcal P_2} F\left(p\right) + 2 \varepsilon + \frac{\sigma^2}{2} I\left(p_{K,\d}\right).
\end{equation*}
 We conclude by taking the limit \(\sigma\to 0\). 
\end{proof}

For the gradient flow analysis of \Cref{sec:gradient} below,  we shall actually consider a slightly more general mean field optimization problem.  
Namely, we aim at minimizing the following generalized free energy function: for all $p\in\cP_H,$
\begin{equation}\label{eq:general free energy}
 \fF^{\si, \gamma}(p) := F(p) + \si^2 I (p) + \gamma H(p),
\end{equation}
where $\gamma\geq 0$ and $H$ is the entropy defined as \[H(p):= \int_{\dbR^d} p(x)\log p(x) dx.\] 
By considering the limit of the rate of change $\frac{\fF^{\sigma, \gamma}(p+t(q-p))- \fF^{\sigma, \gamma}(p)}{t}$ as $t\to 0, $ a formal calculus leads to define by abuse of notation
\begin{equation}\label{eq:dFsigmadp}
\frac{\delta \fF^{\sigma, \gamma}}{\delta p}(p,\cd) ~ : =~ \frac{\delta F}{\delta p} \left(p, \cd \right) - \frac{\sigma^2}{2} \Delta \log p - \frac{\sigma^2}{4} \left|\nabla \log p\right|^2 + \gamma \log p - \lambda(p),
\end{equation}
where $\lambda(p)\in\dbR$ is chosen so that
\begin{equation}\label{eq:normaldFsigmadp}
 \int_{\dbR^d} \frac{\delta \fF^{\sigma,\gamma}}{\delta p}(p,x) p(x) dx =0.
\end{equation}
The details of this calculation can be found within the proof of \Cref{thm:foc} below. 
Note also that equivalent formulas for $\frac{\delta \fF^{\sigma, \gamma}}{\delta p}$ can be obtained by observing that 
\begin{equation*}
 \Delta \log p + \frac{1}{2} \left|\nabla \log p\right|^2 = \frac{\Delta p}{p} - \frac{1}{2} \frac{\left|\nabla p\right|^2}{p^2} = 2 \frac{\Delta \sqrt{p}}{\sqrt{p}}.
\end{equation*}

\subsection{Mean Field Schr\"odinger Dynamics}

Given the definition in \cref{eq:dFsigmadp}, we will consider the following generalized mean field  Schr\"odinger (MFS for short) dynamics
\[\partial_t p_t = - \frac{\delta \fF^{\sigma, \gamma}}{\delta p} \left(p_t, \cdot\right) p_t.\]
Thanks to the normalization in \cref{eq:normaldFsigmadp}, the mass of \(p_t\) is conserved to \(1\).
Writing the functional derivative explicitly, we have the following dynamics
\begin{equation}
\label{eq:gradient-flow}
\partial_t p_t = - \left( \frac{\delta F}{\delta p} \left(p_t, \cdot\right) - \frac{\sigma^2}{2} \Delta \log p_t - \frac{\sigma^2}{4} \left|\nabla \log p_t\right|^2 + \gamma \log p_t - \lambda_t \right) p_t
\end{equation}
where \(p_t = p\left(t, \cdot\right)\) and \(\lambda_t = \lambda(p_t)\) satisfies
\[
\lambda_t = \int_{\dbR^d} \left( \frac{\delta F}{\delta p} \left(p_t, x\right) - \frac{\sigma^2}{2} \Delta \log p_t (x) - \frac{\sigma^2}{4} \left|\nabla \log p_t (x)\right|^2 + \gamma \log p_t(x) \right) p_t(x) dx.
\]
In particular, the important case $\gamma=0$ is called the MFS dynamics, namely,
\begin{equation}\label{eq:mfSchr\"odinger}
 \partial_t p_t = - \frac{\delta \fF^{\sigma}}{\delta p} \left(p_t, \cdot\right) p_t.
\end{equation}

Intuitively the generalized MFS dynamics follows the direction of steepest descent as it moves in the opposite direction of the derivative $\frac{\delta \fF^{\sigma, \gamma}}{\delta p}.$ 
To ensure that it is indeed converging towards a minimizer of $\fF^{\sigma, \gamma},$  
the crucial assumption in this paper is that the derivative $\frac{\delta F}{\delta p}$ decomposes into the sum of a convex potential and a Lipschitz perturbation as stated below.

\begin{assum}
\label{assum:decomposition-F-derivative}
The linear derivative admits the decomposition \(\frac{\delta F}{\delta p} \left(p, x\right) = g\left(x\right) + G\left(p, x\right) \) where $g$ and $G(p,\cdot)$ are $C^2$ such that
\begin{itemize}
 \item[\rm (i)] \(g\) is $\ul \kappa$--convex and has bounded Hessian, \ie,
 \[ \ul\kappa I_d \le \nabla^2 g \le \ol\kappa I_d, \quad\mbox{for some \(\ol\kappa \ge \ul \kappa\)}.\]

 \item[\rm (ii)] $G$ is $\cW_1$--continuous in $p$ and Lipschitz continuous in $x$, \ie,  for all $x,y\in \dbR^d, \, p\in\cP_2(\dbR^d),$
 \[ | G(p,x) - G(p,y)| \le L_G |x -y|.\]

 \item[\rm (iii)] \(\nabla G\) is Lipschitz continuous, \ie, for all $x,y\in \dbR^d, \, p,q\in \cP_2(\dbR^d),$
 \[ |\nabla G(p,x) - \nabla G(q,y)| \le L_{G}
 \left(|x -y| + \cW_1(p,q)\right).\]

\end{itemize}
\end{assum}

\begin{assum}\label{assum:initialdistribution}
The initial distribution admits the decomposition $p_0(x) = e^{-(v_0(x) + w_0(x))}$ where $v_0$ and $w_0$ are $C^1$  such that
\begin{itemize}

 \item[\rm (i)] $v_0$ is $\ul\eta_0$--convex and $\nabla v_0$ is Lipschitz continuous,  \ie, for all $x,y\in \dbR^d,$
 \[  | \nabla v_0(x) - \nabla v_0(y)| \le \ol\eta_0 |x -y|,\quad \left(\nabla v_0(x) - \nabla v_0(y)\right)\cdot (x-y) \ge \ul\eta_0 |x-y|^2.\]
 \item[\rm (ii)] $w_0$ and $\nabla w_0$ are both Lipschitz continuous, \ie, for all $x,y\in \dbR^d,$
 \[  | w_0(x) - w_0(y)| + |\nabla w_0(x) - \nabla w_0(y)| \le L_0 |x -y|.\]
\end{itemize}
\end{assum}

In the sequel, we assume that Assumptions~\ref{assum:potential}, \ref{assum:decomposition-F-derivative} and \ref{assum:initialdistribution} hold. First we show that the generalized MFS dynamic is well-defined and that it decomposes as the exponential of a sum of a convex and a Lipschitz function.  The proof is postponed to \Cref{sec:proof-wellposedness}.

\begin{thm}\label{thm:wellposedness}
Under the assumptions above, the generalized MFS dynamics \cref{eq:gradient-flow} admits a unique positive classical solution $p\in C^3(Q)\cap C(\bar Q).$
In addition,  it admits the decomposition $p_t = e^{-(v_t + w_t)}$ where there exist $\ul \eta, \ol \eta, L>0,$ such that
 \begin{equation}\label{eq:wellposedness}
  \ul \eta I_d \le \nabla^2 v_t \le \ol \eta I_d, \qquad \| \nabla w_t\|_{\infty}\vee \| \nabla^2 w_t\|_{\infty} \le L,\qquad \forall\,t > 0.
 \end{equation}
\end{thm}

Then we study the long-time behaviour of the generalized MFS dynamics and  establish convergence toward the unique minimizer of the generalized free energy function. The proof is postponed to \Cref{sec:convergence}.  It essentially relies on energy dissipation which can be derived formally as follows:
\begin{equation*}
 \frac{d}{dt} \fF^{\si, \gamma} (p_t) =  \int_{\dbR^d} \frac{\d \fF^{\si, \gamma}}{ \d p}(p_t, x) \partial_t p_t(x) dx = - \int_{\dbR^d} \left|\frac{\d \fF^{\si, \gamma}}{ \d p}(p_t, x)\right|^2 p_t(x) dx,
\end{equation*}
See \Cref{thm:energydecrease} below for a proof. 
It follows that the generalized free energy monotonously decreases along the generalized MFS dynamics \eqref{eq:gradient-flow}. Intuitively, the dissipation of energy only stops at the moment \(\frac{\d \fF^{\si, \gamma}}{ \d p}(p^*, \cdot) = 0\).  Since $\fF^{\si, \gamma}$ is (strictly) convex, it is a sufficient condition for $p^*$ to be the minimizer, see  \Cref{thm:foc} below.
  
\begin{thm}\label{thm:convergence}
Under the assumptions above, the solution $(p_t)_{t\ge 0}$ to~\cref{eq:gradient-flow}  converges uniformly on $\dbR^d$ to $p^*$, the unique minimizer of $\fF^{\si, \gamma}$ in $\cP_H$. 
In addition, the optimizer $p^*$ satisfies~\cref{eq:wellposedness} and it is a stationary solution to \cref{eq:gradient-flow}, \ie,
\begin{equation}\label{eq:mfs-foc}
\frac{\d \fF^{\si, \gamma}}{ \d p}(p^*, \cdot) = 0.
\end{equation}
\end{thm}

\begin{rem}
 By \Cref{lem:property-p-uniform} below,  the family of distributions $(p_t)_{t\geq 0}$ admits uniform Gaussian bounds and thus it also converges to $p^*$ for the $L^p$--norm or the $\cW_p$--distance for any $p\geq 1.$
\end{rem}

\begin{rem}\label{rem:linear case}
In case that the function $p\mapsto F(p)$ is linear, \ie, $F(p) = \int_{\dbR^d} f(x) p(dx)$ with some potential $f$, the function $\fF^\si$ is the classical energy function in quantum mechanics composed of the potential energy $F$ and the kinetic one $\int_{\dbR^d} |\nabla \sqrt p(x)|^2 dx $. Let $p^*$ be the minimizer of $\fF^\si$, and denote by $\psi^*:=\sqrt{p^*}$ the corresponding wave function. 
Then the first order equation~\eqref{eq:mfs-foc} reads
\[-\si^2 \Delta \psi^* + f \psi^* = c \psi^*, \quad\mbox{with}\quad c = \fF^\si(p^*) = \min_{p\in \cP_H} \fF^\si(p). \]
It is well known that $c$ is the smallest eigenvalue of the Schr\"odinger operator $-\si^2\Delta + f$ and that $\psi^*$ is the ground state of the quantum system.
\end{rem}

Further we shall prove that the convergence for the MFS dynamics (with $\gamma=0$) is exponentially quick. See~\Cref{sec:exp-cvg} below for a proof. As a byproduct,  we establish a functional inequality in \Cref{thm:functional_ineq} which may carry independent interest.

\begin{thm}\label{thm:exp_convergence}
There exists a constant \(c(\ul\eta,\ol\eta, L,d,\sigma) >0\) such that
\begin{equation}\label{eq:exp_convergence_energy}
    \fF^\sigma(p_t) - \fF^\sigma(p^*) \leq e^{-ct} (\fF^\sigma(p_0) - \fF^\sigma(p^*)). 
\end{equation}
Moreover,  it holds 
\[\frac{\si^2}{4}I(p_t| p^*) \le e^{-ct} (\fF^\sigma(p_0) - \fF^\sigma(p^*)),\]
where $I(p_t| p^*):= \int p_t |\nabla\log(p_t/p^*)|^2$ is the relative Fisher information.
\end{thm}

\subsection{Gradient Flow with Relative Entropy}
\label{sec:gradient}

In this paper, we shall further investigate the gradient flow of the free energy function $\fF^\si$ with respect to the relative entropy.
 First, given $h>0$ and a distribution $\tilde p$ satisfying  \Cref{assum:initialdistribution}, consider the variational problem:
\begin{equation}\label{eq:mfo-entropy}
 \inf_{p\in \cP_H} \left\{ \fF^\si(p) + h^{-1} H(p | \tilde p)\right\}.
\end{equation}
where $H(p|\tilde p):=\int p \log(p/\tilde p)$ is the relative entropy.
In view of \Cref{assum:initialdistribution}, we have the decomposition $\tilde p = e^{-\tilde u}$ with $\tilde u = \tilde v + \tilde w$. Denoting by
\[\tilde F(p):= F(p) + h^{-1} \int_{\dbR^d}\tilde u (x) p(dx)\]
the new potential function, we may rewrite the objective function in the optimization \eqref{eq:mfo-entropy} in the form of the generalized free energy function \eqref{eq:general free energy}, \ie 
\[\tilde\fF^{\si, h^{-1}}(p) = \tilde{F}(p) + \si^2 I (p) + h^{-1} H(p).\]
Moreover, the new potential function $\tilde F$ still satisfies \Cref{assum:decomposition-F-derivative} with $\tilde g = g + h^{-1}\tilde v$ and $\tilde G = G + h^{-1}\tilde w$. 
Therefore, the following result is a straightforward consequence of \Cref{thm:convergence}.

\begin{cor}\label{cor:existence variational form}
If $\tilde p$ satisfies \Cref{assum:initialdistribution}, the minimization problem \eqref{eq:mfo-entropy} admits a unique minimizer $p^*\in\cP_H$ still satisfying \Cref{assum:initialdistribution} (with different coefficients) and it satisfies the first order condition  \[\frac{\d \tilde{\fF}^{\si, h^{-1}}}{ \d p}(p^*, \cdot) = 0.\]
\end{cor}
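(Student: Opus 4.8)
The plan is to reduce the statement to \Cref{thm:convergence} applied to the tilted potential $\tilde F$ introduced just above the corollary, so that the objective of \eqref{eq:mfo-entropy} becomes literally an instance of the generalized free energy $\fF^{\si,\gamma}$ of \eqref{eq:general free energy} with $\gamma = h^{-1}$. All the work lies in checking that $\tilde F$ inherits \Cref{assum:potential} and \Cref{assum:decomposition-F-derivative}; once this is done, there is nothing left to prove.

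First I would make the reduction explicit. Writing $\tilde p = e^{-\tilde u}$ with $\tilde u = \tilde v + \tilde w$, \Cref{assum:initialdistribution} gives $\ul\eta I_d \le \nabla^2\tilde v \le \ol\eta I_d$ (in the $C^{1,1}$ sense), with $\ul\eta > 0$ — which is forced, since otherwise $e^{-\tilde u}$ could not be a probability density — together with $\|\nabla\tilde w\|_\infty < \infty$. Hence $\tilde u$ has at most quadratic growth in both directions, $-C(1+|x|^2) \le \tilde u(x) \le C(1+|x|^2)$, so $\int_{\dbR^d}\tilde u\, dp$ is finite for every $p\in\cP_2(\dbR^d)$ and, for $p\in\cP_H$,
\[
\fF^\si(p) + h^{-1}H(p\,|\,\tilde p) = F(p) + \si^2 I(p) + h^{-1}H(p) + h^{-1}\int_{\dbR^d}\tilde u\, dp = \tilde F(p) + \si^2 I(p) + h^{-1}H(p) = \tilde\fF^{\si,h^{-1}}(p),
\]
with $\tilde F(p) := F(p) + h^{-1}\int_{\dbR^d}\tilde u\, dp$.

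Next I would verify the structural hypotheses for $\tilde F$. Convexity is immediate ($\tilde F$ is $F$ plus a linear functional), and $\tilde F\in\cC^1$ with $\frac{\d\tilde F}{\d p}(p,x) = \frac{\d F}{\d p}(p,x) + h^{-1}\tilde u(x)$, continuous with quadratic growth in $x$. For the quadratic lower bound of \Cref{assum:potential}, combining $F(p)\ge\l\int|x|^2\, dp$ with $\tilde u(x)\ge -C(1+|x|^2)$ and Young's inequality yields $\tilde F(p) \ge \tilde\l\int|x|^2\, dp - \tilde C$ for some $\tilde\l>0$ and $\tilde C\ge 0$; the harmless additive constant $\tilde C$ — irrelevant both for the minimization and for the tightness and functional-inequality arguments behind \Cref{thm:convergence} — is the only point where $\tilde F$ deviates from the literal statement of \Cref{assum:potential}. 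For \Cref{assum:decomposition-F-derivative}, take $\tilde g := g + h^{-1}\tilde v$ and $\tilde G := G + h^{-1}\tilde w$: then $(\ul\kappa + h^{-1}\ul\eta)I_d \le \nabla^2\tilde g \le (\ol\kappa + h^{-1}\ol\eta)I_d$ (item (i)); since the $p$-dependence of $\tilde G$ is exactly that of $G$, the $\cW_1$-continuity in $p$ is inherited, while the spatial Lipschitz constants of $\tilde G$ and of $\nabla\tilde G$ are $L_G + h^{-1}\|\nabla\tilde w\|_\infty$ and $L_{\nabla G} + h^{-1}\ol\eta$ respectively (items (ii)-(iii)).

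With these verifications in place, \Cref{thm:convergence} applied to $\tilde\fF^{\si,h^{-1}}$ gives at once: existence and uniqueness of a minimizer $p^*\in\cP_H$ of \eqref{eq:mfo-entropy}, the fact that $p^*$ again satisfies \Cref{assum:initialdistribution} (with the new coefficients produced by the theorem), and the Euler--Lagrange identity $\frac{\d\tilde\fF^{\si,h^{-1}}}{\d p}(p^*,\cdot)=0$. The main — and essentially only — obstacle is the bookkeeping above: confirming that adding the linear perturbation $h^{-1}\int\tilde u\, dp$, whose density $\tilde u$ is merely $C^{1,1}$ and may be negative, preserves the convexity, regularity and (up to a harmless constant) coercivity required by \Cref{thm:convergence}, for which one uses that $\tilde u$ has quadratic growth with the correct sign.
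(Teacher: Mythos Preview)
Your proposal is correct and follows exactly the approach the paper takes: the paper states, immediately before the corollary, that $\tilde F$ still satisfies \Cref{assum:decomposition-F-derivative} with $\tilde g = g + h^{-1}\tilde v$ and $\tilde G = G + h^{-1}\tilde w$, and declares the result a direct consequence of \Cref{thm:convergence}. Your write-up is in fact more careful than the paper's one-line justification, since you also verify \Cref{assum:potential} (up to the harmless additive constant you flag) and explicitly note the $C^{1,1}$-versus-$C^2$ regularity point that the paper passes over in silence.
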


Now given $p^h_0:= p_0 $ satisfying \Cref{assum:initialdistribution}, we may define a sequence of probability measures using the variational problem \eqref{eq:mfo-entropy}:
\begin{equation}\label{eq:discrete GD}
 p^h_i : = \argmin_{p\in \cP_H} \left\{ \fF^\si(p) + h^{-1} H(p | p^h_{i-1})\right\}, \quad\mbox{for $i\ge 1$}.
\end{equation}
It corresponds to the so--called \emph{minimizing movement scheme} in the optimal transport literature.
According to \Cref{cor:existence variational form}, the minimizer $p^h_i$ is well-defined and it satisfies the first order condition:
\begin{equation}\label{eq:p^h_i}
\frac{\delta \fF^\si}{\delta p} \left(p^h_i, \cdot \right) + h^{-1} (\log p^h_i - \log p^h_{i-1}) = \int  h^{-1} (\log p^h_i - \log p^h_{i-1}) p^h_i. 
\end{equation}
Thus we expect as $h\to 0$ that the minimizing movement scheme $p^h$ converges to the corresponding gradient flow $p$ satisfying
\begin{equation*}
  \frac{\delta \fF^{\sigma}}{\delta p} \left(p_t, \cdot\right) + \partial_t \log p_t = 0,
\end{equation*}
which corresponds to the MFS dynamics~\eqref{eq:mfSchr\"odinger}.

This result is proved rigorously in \Cref{sec:gradientflow-proof} below.  By slightly abusing the notations,  define the continuous-time flow of probability measures:
\begin{equation*}
     p^h_t : =  p^h_i, \quad\mbox{for $t\in [hi, h(i+1))$}.
\end{equation*}

\begin{thm}\label{thm:gradientflow}
The sequence of functions
\((p^h)_{h>0}\) converges, uniformly on $[0,T]\times\dbR^d$ for any $T>0,$  to $p$ the MFS dynamics~\eqref{eq:mfSchr\"odinger}.
\end{thm}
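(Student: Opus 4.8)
Taking $p=p^h_{i-1}$ as a competitor in \eqref{eq:discrete GD} gives $\fF^\si(p^h_i)+h^{-1}H(p^h_i\,|\,p^h_{i-1})\le \fF^\si(p^h_{i-1})$, so $i\mapsto\fF^\si(p^h_i)$ is non-increasing and, since $\fF^\si\ge F\ge 0$ by \Cref{assum:potential}, bounded. Summing yields $\sum_{i\ge1}H(p^h_i\,|\,p^h_{i-1})\le h\big(\fF^\si(p_0)-\inf_{\cP_H}\fF^\si\big)$. In particular $\sup_{i,h}\big(F(p^h_i)+I(p^h_i)\big)<\infty$, which already controls the second moments of the $p^h_i$ and the $H^1$--norms of the $\sqrt{p^h_i}$.

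\textbf{Step 2: uniform regularity of the iterates.} By \Cref{cor:existence variational form} each $p^h_i=e^{-(v^h_i+w^h_i)}$ satisfies \Cref{assum:initialdistribution} and solves the Euler--Lagrange equation \eqref{eq:p^h_i}. The key point is that the constants in this decomposition can be chosen \emph{uniformly in $i$ and in $h\in(0,h_0]$ with $ih\le T$}. Combining \eqref{eq:p^h_i} with \eqref{eq:dFsigmadp} (for $\gamma=0$), the Lagrange multiplier $\lambda(p^h_i)$ cancels and one rewrites the scheme as
\[
h^{-1}\big(\log p^h_i-\log p^h_{i-1}\big)=-\frac{\delta\fF^\si}{\delta p}(p^h_i,\cdot)+c^h_i,\qquad c^h_i:=\int_{\dbR^d}h^{-1}\big(\log p^h_i-\log p^h_{i-1}\big)\,p^h_i\,dx,
\]
which exhibits \eqref{eq:discrete GD} as a semi-implicit Euler discretisation of $\partial_t\log p_t=-\tfrac{\delta\fF^\si}{\delta p}(p_t,\cdot)$. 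One then argues by induction on $i$: assuming the decomposition bounds of \Cref{thm:wellposedness}/\Cref{lem:property-p-uniform} type for $p^h_{i-1}$ with the candidate uniform constants, a quantitative, $h$--explicit version of the a priori analysis behind \Cref{thm:wellposedness,thm:convergence} (run directly on the elliptic equation \eqref{eq:p^h_i} via the reflection--coupling/maximum-principle arguments of the Appendix) shows that the singular zeroth-order term $h^{-1}\log p^h_i$ and the singular source $h^{-1}\log p^h_{i-1}$ combine into the $O(1)$ quantity displayed above, so that $\big\|\log p^h_i-\log p^h_{i-1}\big\|_{C^2(K)}\le C_K h$ on every compact $K$, together with uniform Gaussian upper and lower bounds propagated from $p_0$. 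Iterating over $i\le T/h$ the total drift is $O(T)$, so the $p^h_i$ stay in a fixed bounded subset of $C^2_{\mathrm{loc}}$ (in fact $C^k_{\mathrm{loc}}$ for every $k$, by Schauder), and $\tfrac{\delta\fF^\si}{\delta p}(p^h_i,\cdot)$, the constants $c^h_i$, and the time-increments $h^{-1}(\log p^h_i-\log p^h_{i-1})$ are bounded on compacts uniformly in $i,h$.

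\textbf{Step 3: compactness, identification of the limit, and conclusion.} Let $p^h$ be the piecewise-constant-in-time interpolant. Step~2 provides, on every $[0,T]\times K$, uniform-in-$h$ bounds on $\log p^h$ in $C^2$ in $x$ and an $O(h)$--Lipschitz modulus in $t$; by Arzel\`a--Ascoli some sequence $h_k\to0$ satisfies $\log p^{h_k}\to\log p$ in $C([0,T];C^2(K))$ for all $K,T$, hence $p^{h_k}\to p:=e^{-\log p}$ in the same topology, with $p_t$ a probability measure (uniform tightness from the Gaussian bounds). Summing the rescaled relation of Step~2 gives, for $t\in h_k\mathbb N$,
\[
\log p^{h_k}_t-\log p_0=-\sum_{j\le t/h_k}h_k\Big(\tfrac{\delta\fF^\si}{\delta p}(p^{h_k}_j,\cdot)-c^{h_k}_j\Big),
\]
a Riemann sum that, using the established convergences together with the uniform Gaussian domination to pass the limit through the integrals defining $\lambda^h_i$ and $\lambda(p^h_i)$, converges to $-\int_0^t\big(\tfrac{\delta\fF^\si}{\delta p}(p_s,\cdot)-c_s\big)\,ds$; integrating $\partial_t p_t=\big(-\tfrac{\delta\fF^\si}{\delta p}(p_t,\cdot)+c_t\big)p_t$ against $dx$ and using mass conservation forces $c_t\equiv0$, so $\partial_t p_t=-\tfrac{\delta\fF^\si}{\delta p}(p_t,\cdot)\,p_t$ with $p|_{t=0}=p_0$; the $C^2_{\mathrm{loc}}$--regularity makes this a classical solution, which by the uniqueness part of \Cref{thm:wellposedness} (with $\gamma=0$) is the mean-field Schr\"odinger dynamics \eqref{eq:mfSchr\"odinger}. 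Since the limit is independent of the subsequence, the whole family $(p^h)_{h>0}$ converges; finally the uniform Gaussian upper bounds $p^h_t(x),p_t(x)\le Ce^{-c|x|^2}$ control the tails and upgrade the local uniform convergence to uniform convergence on $[0,T]\times\dbR^d$.

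\textbf{Main obstacle.} The crux is Step~2, namely that the regularity estimates for the implicit iterates are uniform as $h\to0$: one must show that a single step of \eqref{eq:discrete GD} perturbs $p^h_{i-1}$ by only $O(h)$ in a norm strong enough ($C^2_{\mathrm{loc}}$ plus Gaussian tails) to pass to the limit in the nonlinear and second-order term $\tfrac{\delta\fF^\si}{\delta p}$. This requires redoing the coupling/maximum-principle estimates of the Appendix and of \Cref{thm:wellposedness,thm:convergence} with explicit tracking of the $h^{-1}$--weighted terms, exploiting that the (favourably signed) singular zeroth-order term exactly offsets the singular potential $h^{-1}\log p^h_{i-1}$, and handling carefully the at-most-quadratic growth in $x$ of $\tfrac{\delta F}{\delta p}$ and the nonlocal normalisers $\lambda^h_i$.
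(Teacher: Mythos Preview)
Your overall architecture---uniform-in-$h$ estimates on the iterates, Arzel\`a--Ascoli compactness, identification of the limit, uniqueness---matches the paper's, and you correctly single out the uniform regularity of Step~2 as the crux (the paper devotes Lemmas~5.2--5.6 to exactly this, via the reflection-coupling/Feynman--Kac arguments you allude to).

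There is, however, a genuine gap in how you pass to the limit. In Step~3 you want convergence in $C\big([0,T];C^2(K)\big)$ of $\log p^{h}$, because the quantity $\frac{\delta\fF^\si}{\delta p}(p^h_j,\cdot)$ in your Riemann sum contains $\Delta\log p^h_j$ and $|\nabla\log p^h_j|^2$. For Arzel\`a--Ascoli in that space you need a time-modulus on $\nabla^2 u^h_i$, which you assert in Step~2 via ``$\|\log p^h_i-\log p^h_{i-1}\|_{C^2(K)}\le C_Kh$''. But the methods you invoke only yield $|u^h_j-u^h_i|\le C(j-i)h(1+|x|^2)$ and, with more work (heat-kernel iteration), $|\nabla u^h_j-\nabla u^h_i|\le C\big((j-i)h\big)^{1/2}(1+|x|)$; no modulus on $\nabla^2 u^h_i$ comes out. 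Your Schauder bootstrap does not close either: to apply interior estimates to the elliptic equation for $u^h_i$ you would need the source $h^{-1}(u^h_i-u^h_{i-1})$ to be uniformly H\"older, yet from the $O(h^{1/2})$ gradient increment its Lipschitz constant is $O(h^{-1/2})$.

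The paper sidesteps this by passing to the limit in the equation for $\psi^h=\sqrt{p^h}$, which (for fixed measure argument) is linear: $\partial_t\psi=\tfrac{\sigma^2}{2}\Delta\psi-\tfrac12\big(\tfrac{\delta F}{\delta p}(p_t,\cdot)-\lambda_t\big)\psi$. In the weak form against $\varphi\in C^2_c$, the Laplacian lands on $\varphi$, so only uniform $C^0$-compactness of $\psi^h$ and compactness of $\lambda^h$ are needed; the required time modulus on $\psi^h$ is then merely $O(h^{1/2})$, which follows from the $O(h)$ bound on $u^h_i-u^h_{i-1}$. After identifying a weak limit the paper upgrades to a classical solution via the Duhamel formula for the heat semigroup, and concludes by uniqueness. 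If you keep your Steps~1--2 (with the realistic moduli: $O(h)$ on $u$, $O(h^{1/2})$ on $\nabla u$, uniform bounds on $\nabla^2 u$) and replace your Step~3 by this $\psi$-formulation, the argument goes through.
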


\begin{rem}
 In view of \Cref{cor:p properties} below,  the family of distributions $(p^h)_{h>0}$ admits uniform Gaussian bounds and thus we also have for any $p\geq 1,$
\begin{equation*}
 \sup_{t\in[0,T]} \big\|p_t^h - p_t\big\|_{L^p}\xrightarrow[h\to 0]{} 0, \qquad \sup_{t\in[0,T]} \cW_p\big(p_t^h, p_t\big)\xrightarrow[h\to 0]{} 0.
\end{equation*} 
\end{rem}

\subsection{Numerical Simulation}\label{sec:algorithm}

In this section we shall briefly report how to sample $\frac{1}{N}\sum_{i=1}^{N} \d_{X^i_t}$ to approximate the probability law $p_t$ in the MFS dynamics \eqref{eq:mfSchr\"odinger}, without pursuing mathematical rigorism.

Observe first that the MFS dynamics \eqref{eq:mfSchr\"odinger} can be rewritten as
\[\partial_t p_t = \frac{\si^2}{2} \Delta p_t - \left( \frac{\delta F}{\delta p} \left(p_t, \cdot\right) + \frac{\sigma^2}{4} \left|\nabla\log p_t\right|^2 - \lambda_t \right) p_t.\]
This can viewed as the Fokker--Planck equation describing the marginal distribution of a Brownian motion $(X_t)_{t\ge 0}$ killed at rate $ \eta(t,x):= \frac{\delta F}{\delta p} (p_t, x) + \frac{\sigma^2}{4}\left|\nabla \log p_t (x)\right|^2 $ conditionned on not being killed. 
In other words,
the particle $X$ moves freely in the space $\dbR^d$ as a Brownian motion $(\si W_t)_{t\ge 0}$ before it gets killed with conditional probability
\[\dbP\left(~\mbox{X gets killed in}~[t, t+\Delta t]~| ~ X_t\right)\approx \eta(t, X_t)\Delta t , \quad\mbox{for small $\Delta t$}.\]
Meanwhile the killed particle gets reborn instantaneously according to the distribution $p_t$. This interpretation of the MFS dynamics offers an insight on how to sample the marginal law $p_t.$ However, in order to evaluate the death rate $\eta(t, X_t),$ one needs to evaluate $|\nabla \log p_t|^2$, which can be hard if not impossible in practice. 
This difficulty forces us to find a more sophisticated way to sample $p_t$.

Now observe that $\psi_t := \sqrt{ p_t}$ solves the PDE:
\begin{equation}\label{eq:mfSchr\"odinger-root}
\partial_t \psi_t = \frac{\si^2}{2}\Delta \psi_t - \frac12 \left(\frac{\d F}{\d p}(p_t, \cdot) -\lambda_t\right)\psi_t. 
\end{equation}
Then introduce two scaling of $\psi_t$, namely, $\bar \psi_t := e^{- \frac12\int_0^t \lambda_s \,ds} \psi_t$ and $\hat \psi_t := (\int \psi_t)^{-1} \psi_t$ so that
\[\partial_t \bar \psi_t = \frac{\si^2}{2}\Delta \bar \psi_t - \frac12 \frac{\d F}{\d p}(p_t, \cdot)\bar \psi_t,\qquad
\partial_t \hat \psi_t = \frac{\si^2}{2}\Delta \hat \psi_t - \frac12 \left(\frac{\d F}{\d p}(p_t, \cdot)- \hat \lambda_t\right) \hat \psi_t,\]
where the constant $\hat \lambda_t\in\dbR$ is chosen so that $\hat\psi_t$ is a probability density. Observe that:
\begin{itemize}
 \item By the Feynman--Kac formula, the function $\bar\psi$ has the probabilistic representation:
 \begin{align*}
 \bar\psi_t(x) &= \dbE\left[\exp\Big(-\int_0^t \frac12 \frac{\d F}{\d p}(p_{t-s}, x+ \si W_s)ds\Big)\psi_0(x+\si W_t)\right]\\
 & \approx \frac{1}{M}\sum_{j=1}^M \exp\Big(-\int_0^t \frac12 \frac{\d F}{\d p}(p_{t-s}, x+ \si W^j_s)ds\Big)\psi_0(x+\si W^j_t),
 \end{align*}
 where the latter is the standard Monte Carlo approximation of the expectation.
 
  \item The probability law $\hat \psi$ is the marginal distribution of a Brownian motion killed at rate  $\eta(t,x):=\frac12 \frac{\d F}{\d p} (p_t, x)$ conditioned on not being killed. 
  It can be sampled by simulating a large number $(\hat X^i)_{1\le i\le N}$ of independent Brownian particles killed at rate $\eta$ which upon dying are instantaneously reborn by duplicating one of the living particles. 

 \item Eventually,  the distribution $p_t$ can be approximately sampled as the following weighted empirical measure
 \[p_t  = \frac{\bar\psi_t }{\int_{\dbR^d} \bar\psi_t(x)\hat\psi_t(x)dx}\hat\psi_t \approx \frac{1}{N} \sum_{i=1}^{N}\frac{\bar\psi(t, \hat X^i_t) }{\frac{1}{N} \sum_{k=1}^{N} \bar\psi(t, \hat X^k_t)} \d_{\hat X^i_t}.\]
\end{itemize}

\begin{rem}
In particular, in view of \Cref{rem:linear case}, the Monte Carlo method above offers an efficient way to sample the ground state of a high dimensional quantum system. To our knowledge there is little discussion on similar numerical schemes in the literature.
\end{rem}

\section{Mean Field Schr\"odinger Dynamics}
\label{sec:mfSchr\"odinger}

In order to study the generalized MFS dynamics in \cref{eq:gradient-flow},  we introduce a change of variable $p_t = e^{-u_t} / \int e^{-u_t}$ where $u$ satisfies  the
following equation:
\begin{equation}
\label{eq:HJB}
\partial_t u_t = \frac{\sigma^2}{2} \Delta u_t - \frac{\sigma^2}{4} \left|\nabla u_t\right|^2 + \frac{\delta F}{\delta p} \left(p_t, \cdot\right) - \gamma u_t,
\end{equation}
with initial condition $u_0=-\log{p_0}.$
Clearly,  $u$ is a classical solution to \cref{eq:HJB} if and only if the probability density \(p\) is a positive classical solution to \cref{eq:gradient-flow}.
Thus we consider the mapping
\begin{equation}\label{eq:contraction}
(m_t)_{t\in [0,T]} \mapsto (u_t)_{t\in [0,T]} \mapsto (p_t)_{t\in [0,T]}
\end{equation} 
where $p_t = e^{-u_t} / \int e^{-u_t}$ and $u$ solves the equation
\begin{equation}\label{eq:fixpoint_m-to-u}
 \partial_t u_t = \frac{\sigma^2}{2} \Delta u_t - \frac{\sigma^2}{4} \left|\nabla u_t\right|^2 + \frac{\delta F}{\delta p} \left(m_t, \cdot\right) -\gamma u_t,
\end{equation}
 and we look for a fixed point to this mapping as it corresponds to a solution to \cref{eq:HJB}. 
 Note that \cref{eq:fixpoint_m-to-u} corresponds to the Hamilton--Jacobi--Bellman (HJB for short) equation of a classical linear-quadratic stochastic control problem and so $u$ is well-defined as the unique viscosity solution of this equation by standard arguments.

In this section, we first show that the solution to the HJB equation \eqref{eq:fixpoint_m-to-u} can be decomposed as the sum of a convex and a Lipschitz function. This allows us to apply a reflection coupling argument to show that the mapping~\eqref{eq:contraction} is a contraction on short horizon and thus to ensure existence and uniqueness of the solution to \eqref{eq:HJB}.This completes the proof of \Cref{thm:wellposedness}. Finally we gather some properties of the solution to \cref{eq:gradient-flow} for later use.

\subsection{Hamilton--Jacobi--Bellman Equation}
\label{sec:decomposition}

The aim of this section is to prove that the solution to the HJB equation \eqref{eq:fixpoint_m-to-u} is smooth and can be decomposed into the sum of a convex and a Lipschitz function as stated in Proposition~\ref{prop:hjb-main} below.  Throughout this section we assume that the following assumption holds.

\begin{assum}\label{assum:mLipschitz}
 Assume that the mapping $t\mapsto m_t$ is $\cW_1$--continuous, \ie,
 \[\lim_{s\rightarrow t}\cW_1(m_t, m_s) =0, \quad\mbox{for all $t\ge 0$}.\]
\end{assum}

\begin{prop}\label{prop:hjb-main}
 There exists a unique classical solution $u\in C^3(Q)\cap C(\bar Q)$ to the HJB equation~\eqref{eq:fixpoint_m-to-u}. 
 In addition,  $u=v+w$ where there exist $\ul \eta, \ol \eta, L>0,$ independent of $m,$ such that
 \begin{equation*}
  \ul \eta I_d \le \nabla^2 v_t \le \ol \eta I_d, \qquad \| \nabla w_t\|_{\infty}\vee \| \nabla^2 w_t\|_{\infty} \le L,\qquad \forall\,t > 0.
 \end{equation*}
\end{prop}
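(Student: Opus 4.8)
The plan is to treat the HJB equation \eqref{eq:fixpoint_m-to-u} by the Cole--Hopf-type substitution that linearises the quadratic gradient term, then to recover the convex-plus-Lipschitz decomposition by a separate treatment of the two source terms $g$ and $G(m_t,\cdot)$ coming from Assumption~\ref{assum:decomposition-F-derivative}. Concretely, writing $\psi_t = \exp(-u_t/2)$ (possibly after absorbing the zeroth-order term $-\gamma u$, which is the only obstruction to an exact linearisation — one can handle it by a Duhamel/fixed-point argument on a short time interval, or keep it as a lower-order perturbation since it is globally Lipschitz in $u$ once $u$ is known to have bounded gradient), the equation for $\psi$ becomes a linear parabolic equation $\partial_t\psi = \tfrac{\sigma^2}{2}\Delta\psi - \tfrac12(\tfrac{\delta F}{\delta p}(m_t,\cdot))\psi + \tfrac{\gamma}{2}(\log\psi^2)\psi$ with a potential of quadratic growth (because $g$ is $\ol\kappa$-smooth and $\ul\kappa$-convex, hence $g$ itself grows at most quadratically and at least... actually here one should be careful: $\tfrac{\delta F}{\delta p}$ only has quadratic growth, so the potential is controlled both above and below by quadratics). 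Standard Schauder theory for linear parabolic equations with locally Hölder coefficients of controlled growth — e.g. as in Friedman or Ladyzhenskaya--Solonnikov--Uraltseva — gives a unique classical solution $\psi \in C^{2,1}$ with Gaussian-type two-sided bounds and gradient estimates, and bootstrapping using the $C^2$ regularity of $g$ and $G(m_t,\cdot)$ (Assumption~\ref{assum:decomposition-F-derivative}(i)--(iii)) upgrades this to $C^3$; transferring back through $u = -2\log\psi$ yields the $C^3$ classical solution to \eqref{eq:fixpoint_m-to-u}.

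For the decomposition $u = v + w$, I would split the source term $\tfrac{\delta F}{\delta p}(m_t,x) = g(x) + G(m_t,x)$ and define $v$ as the solution of the HJB equation driven by $g$ alone, and $w := u - v$. The point is that the equation for $v$, namely $\partial_t v = \tfrac{\sigma^2}{2}\Delta v - \tfrac{\sigma^2}{4}|\nabla v|^2 + g - \gamma v$, has a $\ul\kappa$-convex, $\ol\kappa$-smooth source, and one expects the convexity to propagate: I would prove $\ul c\,I_d \le \nabla^2 v_t \le \ol c\,I_d$ uniformly in $t$ by differentiating the equation twice in space and running a maximum-principle (or, in the probabilistic spirit of the paper, a coupling) argument on the matrix $\nabla^2 v_t$, exploiting that the quadratic term $-\tfrac{\sigma^2}{4}|\nabla v|^2$ contributes $-\tfrac{\sigma^2}{2}\nabla^2 v\,(\nabla^2 v)$ which is dissipative for the upper bound, and the $-\gamma v$ term (differentiated) contributes $-\gamma\nabla^2 v$ which only helps; the lower bound $\ul c > 0$ uses $\ul\kappa$-convexity of $g$ together with the coercivity built into Assumption~\ref{assum:potential}. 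For $w$, subtracting the two equations gives $\partial_t w = \tfrac{\sigma^2}{2}\Delta w - \tfrac{\sigma^2}{4}(|\nabla u|^2 - |\nabla v|^2) + G(m_t,x) - \gamma w$, i.e. $\partial_t w = \tfrac{\sigma^2}{2}\Delta w - \tfrac{\sigma^2}{4}\nabla(u+v)\cdot\nabla w + G(m_t,x) - \gamma w$, a linear equation in $w$ with a transport term; since $G(m_t,\cdot)$ is uniformly Lipschitz in $x$ and $\nabla G$ is Lipschitz (Assumption~\ref{assum:decomposition-F-derivative}(ii)--(iii)), differentiating once in space and applying the comparison principle (or Bismut--Elworthy--Li / coupling) yields $\|\nabla w_t\|_\infty \le C$ and $\|\nabla^2 w_t\|_\infty \le C$ uniformly in $t$, with $C$ independent of $m$ because only the Lipschitz constants $L_G, L_{\nabla G}$ and the structural constants enter.

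The main obstacle, I expect, is making the uniform-in-time Hessian bounds on $v$ rigorous: the naive maximum principle argument requires either a priori control ruling out loss of compactness at spatial infinity or a careful use of the coercivity/confinement — this is exactly where the paper's reflection-coupling machinery (alluded to after Proposition~\ref{prop:hjb-main} and deferred to the appendix) should enter, giving a contraction estimate with a rate independent of the horizon $T$ and hence time-uniform bounds. A secondary technical point is the treatment of the $-\gamma u$ term in the Cole--Hopf step: it obstructs an exact linearisation, so one either argues by a short-time fixed point (then iterates, using the time-uniform estimates to glue) or treats it perturbatively after establishing $\|\nabla u_t\|_\infty$ bounds so that $\gamma u$ is effectively a bounded-gradient zeroth-order term. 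Finally, one must check that all constants $\ul c, \ol c, C$ genuinely do not depend on $m$: this follows because $m$ enters only through $G(m_t,\cdot)$, and Assumption~\ref{assum:decomposition-F-derivative} bounds the relevant norms of $G$ and $\nabla G$ uniformly over $p \in \cP_2(\dbR^d)$.
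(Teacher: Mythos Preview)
Your high-level plan --- Cole--Hopf for existence/regularity, then the splitting $u=v+w$ with $v$ driven by $g$ alone and $w:=u-v$ --- is exactly the paper's architecture. The technical implementation, however, diverges in instructive ways.

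For the Hessian bounds on $v$, the paper does \emph{not} differentiate twice and run a maximum principle. Instead it (i) establishes a short-time bound $\sup_{t\le\delta}\|\nabla^2 v_t\|_\infty<\infty$ via well-posedness of a decoupled FBSDE (Lemma~3.3), then (ii) obtains the sharp convexity lower bound $\nabla^2 v_t\ge\theta_t I_d$ by a BSDE comparison argument, where $\theta_t$ solves the Riccati ODE $\dot\theta=\ul\kappa-\gamma\theta-\sigma^2\theta^2$, $\theta_0=\ul\eta$ (Proposition~3.4(i)); (iii) uses this convexity together with a Feynman--Kac representation of $\nabla v$ and a \emph{synchronous} coupling of the associated diffusions (exploiting $\theta_t\ge\ul\theta>0$) to get a $\delta$-independent upper bound on $\nabla^2 v$; and (iv) iterates on intervals of length $\delta$. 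Your maximum-principle sketch is plausible in spirit, but the paper's route gives the explicit constant $\ul\theta=\min(\ul\eta,\theta^*)$ and sidesteps the issue of compactness at spatial infinity that you correctly flag.

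For $w$, the paper does not linearise around $\nabla(u+v)$ but writes $w$ as the value function of a stochastic control problem whose drift is $-\tfrac{\sigma^2}{2}(\nabla v+\alpha)$; comparing two controlled trajectories with the same control and different starting points, the strict convexity of $v$ alone yields exponential contraction, hence the uniform Lipschitz bound on $w$ (Proposition~3.5). Your linearised form has drift involving $\nabla u$, which is only ``convex plus bounded'' --- this would already require reflection coupling and a priori knowledge that $\nabla w$ is bounded, making the argument circular. The paper's stochastic-control detour isolates the purely convex drift and is cleaner.

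Finally, you place the reflection coupling at the $v$-Hessian step; in the paper it appears only later, to bound $\|\nabla^2 u_t\|_\infty$ uniformly in time (Lemma~3.6) --- and hence $\|\nabla^2 w_t\|_\infty$ by subtraction --- precisely because the drift $-\nabla u=-\nabla v-\nabla w$ is convex-plus-bounded rather than convex. For $v$ itself, synchronous coupling is enough. Also, the coercivity in Assumption~2.1 plays no role here; the lower Hessian bound on $v$ comes solely from $\ul\kappa$ and $\ul\eta$.
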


By the Cole--Hopf transformation, we may prove in a rather classical way that there exists a unique smooth solution to \cref{eq:fixpoint_m-to-u}. 
We refer to Appendix \ref{sec:regularity} for a complete proof.  
Further, given the decomposition $\frac{\d F}{\d p}(p,x) = g(x) + G(p,x)$ in \Cref{assum:decomposition-F-derivative} and $u_0=v_0+w_0$ in \Cref{assum:initialdistribution}, we are tempted to decompose the solution to \cref{eq:fixpoint_m-to-u} as \(u = v + w\), where \(v\) solves the HJB equation corresponding to the convex part
\begin{equation}
\label{eq:HJB-strongly-convex}
\partial_t v_t = \frac{\sigma^2}{2} \Delta v_t - \frac{\sigma^2}{4} \left|\nabla v_t\right|^2 + g -\gamma v_t,
\end{equation}
and \(w\) solves the remaining part
\begin{equation}
\label{eq:HJB-MF-part}
\partial_t w_t = \frac{\sigma^2}{2} \Delta w_t - \frac{\sigma^2}{2} \nabla v_t \cdot \nabla w_t - \frac{\sigma^2}{4} \left|\nabla w_t\right|^2 + G\left(m_t, \cdot\right) - \gamma w_t.
\end{equation}
Because it is a special case of \cref{eq:fixpoint_m-to-u},
 \cref{eq:HJB-strongly-convex} also admits a unique classical solution, and therefore so does \cref{eq:HJB-MF-part}. 
The proof of Proposition \ref{prop:hjb-main} is completed through Propositions \ref{prop:V-convexity},  \ref{prop:W-Lipschitz} and \ref{lem:hessian-u-bounded} below.

\begin{rem}\label{rmk:u_to_v}
In case $G=0$ and $w_0 = 0,$ we have $u=v$. Therefore all the properties proved for the function $u$ are shared by the function $v$.
\end{rem}

\begin{lem}\label{lem:FBSDEshorthorizon}
Let $u$ be the classical solution to \cref{eq:fixpoint_m-to-u}. There exists a constant $\d>0$ only depending on $\ol\kappa,  \ol\eta_0, L_0, L_G$ from Assumption~\ref{assum:decomposition-F-derivative} and \ref{assum:initialdistribution}
such that $\sup_{T\le \delta} \|\nabla^2 u(T,\cdot)\|_\infty <\infty$.
\end{lem}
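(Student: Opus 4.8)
The plan is to establish the short-horizon Hessian bound for $u$ by representing $\nabla u$ through a forward–backward stochastic system (an FBSDE) coming from the stochastic-control interpretation of the HJB equation~\eqref{eq:fixpoint_m-to-u}, and then propagating regularity from the terminal datum $u_0 = v_0 + w_0$. Via the Cole–Hopf transform $\psi = e^{-\frac12 u}$ we already know from Proposition~\ref{prop:regularity-heat-equation} that $u\in C^3(Q_T)$ and that $\nabla u$ has at most linear growth on compact time intervals (\Cref{cor:uniform in time gradient}); so the issue is purely to get a bound on $\|\nabla^2 u(T,\cdot)\|_\infty$ that is \emph{uniform in} $x$ and holds for $T$ below a threshold $\delta$ depending only on $\ul\kappa,\ol\kappa,\ol\eta$. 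First I would fix a point $x_0$, differentiate the HJB equation once in space, and write the resulting linear parabolic equation for $\nabla u$ probabilistically: with the controlled diffusion $dX_s = -\tfrac{\sigma^2}{2}\nabla u(T-s, X_s)\,ds + \sigma\,dB_s$ started at $x_0$, one has $\nabla u(T,x_0) = \mathbb E\big[\,\cdots\,\big]$ in terms of $\nabla u_0(X_T)$ and a time integral of $\nabla\big(\tfrac{\delta F}{\delta p}\big)(m_s,X_s) = \nabla g(X_s) + \nabla G(m_s,X_s)$, weighted so that the decay term $-\gamma u$ contributes a factor $e^{-\gamma s}$ (harmless since $\gamma\ge 0$).

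The second step is to differentiate this representation once more in $x_0$. The derivative $\partial_{x_0} X_s =: J_s$ solves a linearized (matrix) SDE whose drift involves $\nabla^2 u(T-s, X_s)$, exactly the quantity we are trying to control, so a priori this is circular. The way out is a contraction / Gronwall argument on the short interval $[0,\delta]$: denote $R_T := \|\nabla^2 u(T,\cdot)\|_\infty$ (finite for each fixed $T$ by the $C^3$ regularity, but possibly blowing up as $T$ grows — that is precisely what we want to preclude for $T\le\delta$). Differentiating the Feynman–Kac formula and using $\|\nabla^2 u_0\|_\infty \le \ol\eta$ (from \Cref{assum:initialdistribution}(ii)), $\|\nabla^2 g\|_\infty \le \max(|\ul\kappa|,\ol\kappa)$ and the Lipschitz bound $L_{\nabla G}$ on $\nabla G$ (\Cref{assum:decomposition-F-derivative}), together with $\mathbb E|J_s|\le \exp\!\big(\tfrac{\sigma^2}{2}\int_0^s R_{T-r}\,dr\big)$, one obtains an estimate of the schematic form
\[
R_T \;\le\; \Big(\ol\eta + C\!\int_0^T \big(1 + R_{T-s}\big)\,ds\Big)\exp\!\Big(\tfrac{\sigma^2}{2}\int_0^T R_{T-r}\,dr\Big),
\]
valid for all $T$ for which the right-hand side is finite. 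A standard continuation argument then shows that if $\delta$ is chosen small enough (depending only on $\ol\eta$, $\ol\kappa$, $|\ul\kappa|$, $L_{\nabla G}$, $\sigma$), the quantity $\sup_{T\le\delta} R_T$ is bounded by, say, $2\ol\eta+1$: one notes $R_T$ is continuous in $T$ (again from the $C^3$ regularity), $R_0 = \|\nabla^2 u_0\|_\infty\le\ol\eta$, and the inequality above forbids $R_T$ from first reaching the level $2\ol\eta + 1$ on $[0,\delta]$.

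Two technical points need care. First, the Feynman–Kac representation and its $x$-derivative must be justified despite the merely quadratic (not bounded) growth of $u$ and linear growth of $\nabla u$; here \Cref{cor:uniform in time gradient} supplies the moment bounds on $X_s$ needed to make the localization/dominated-convergence arguments go through, and one can use the $\psi$-form~\eqref{eq:roughestimate nabla u} to control the weight terms. Second, and this is the main obstacle, is the self-referential nature of the Jacobian flow $J_s$: the bound on $\mathbb E|J_s|$ genuinely depends on $\nabla^2 u$ along the trajectory, so the argument only closes because we restrict to a short horizon on which the Gronwall factor $\exp(\tfrac{\sigma^2}{2}\int_0^T R)$ stays close to $1$. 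Getting the dependence of $\delta$ to involve only $\ul\kappa,\ol\kappa,\ol\eta$ — and, in particular, \emph{not} $L_G$ or $L_{\nabla G}$ if the statement is read literally — may require isolating the convex part: decompose $u = v + w$ as in~\eqref{eq:HJB-strongly-convex}--\eqref{eq:HJB-MF-part}, run the above FBSDE argument for $v$ alone (whose coefficients involve only $g$, hence only $\ul\kappa,\ol\kappa$), and handle $w$ by the separate Lipschitz estimates of \Cref{prop:W-Lipschitz}; then $\|\nabla^2 u\|_\infty \le \|\nabla^2 v\|_\infty + \|\nabla^2 w\|_\infty$ gives the claim. I would present the $v$-case in detail and invoke \Cref{rmk:u_to_v} to transfer it.
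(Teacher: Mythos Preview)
Your Jacobian-flow/Gronwall strategy is natural, but it has a genuine gap. You assert that $R_T := \|\nabla^2 u(T,\cdot)\|_\infty$ is ``finite for each fixed $T$ by the $C^3$ regularity'' and that $T\mapsto R_T$ is continuous for the same reason. Neither follows: $u\in C^3(Q_T)$ only gives continuity of $\nabla^2 u$ on $(0,T]\times\dbR^d$, not a global-in-$x$ bound at any fixed time, and continuity of a sup over $\dbR^d$ is not automatic either. Your continuation argument needs both facts to close the self-referential inequality $R_T \le (\ol\eta + CT)\exp\big(\tfrac{\sigma^2}{2}\int_0^T R_r\,dr\big)$: without them you cannot control the Jacobian $J_s$, and the scheme is circular as written. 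The same issue persists if you restrict to $v$.

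The paper sidesteps this circularity by a different mechanism. Rather than differentiating the flow at one point, it identifies $(X_t,Y_t,Z_t)=(X_t,\nabla u(T-t,X_t),\sigma\nabla^2 u(T-t,X_t))$ as a solution of the coupled FBSDE
\[
dX_t=-\tfrac{\sigma^2}{2}Y_t\,dt+\sigma\,dW_t,\qquad dY_t=\big(\gamma Y_t-\nabla\tfrac{\delta F}{\delta p}(m_{T-t},X_t)\big)dt+Z_t\,dW_t,\qquad Y_T=\nabla u_0(X_T),
\]
whose coefficients are Lipschitz with constants determined by $\ul\kappa,\ol\kappa,\ol\eta$. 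The real work is then to show that this candidate $(Y,Z)$ lies in the solution space $\|(Y,Z)\|_{\cD}<\infty$ \emph{without} assuming $\nabla^2 u$ is bounded: one uses the crude Cole--Hopf estimate $|\nabla u(t,x)|\le C_T(1+|x|^2)e^{C_T|x|^2}$ and the explicit Girsanov density $\Lambda_T\le C_Te^{C_T(|x|^2+\sup_t|\bar X_t|^2)}$ to obtain $\dbE[\sup_t|Y_t|^2]<\infty$ for $T$ small enough that $\dbE^{\dbP}[e^{C_T\sup_t|\bar X_t|^2}]<\infty$; It\^o on $|Y_t|^2$ then yields $\dbE[\int_0^T|Z_t|^2\,dt]<\infty$. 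Once $(Y,Z)$ is in the space, the short-horizon FBSDE wellposedness and stability theorem (Ma--Yong \cite[Theorem~I.5.1]{MY99}) gives, for $T\le\delta$ depending only on the Lipschitz data, the estimate $|Y_0-Y_0'|\le C|x-x'|$, i.e.\ $|\nabla u(T,x)-\nabla u(T,x')|\le C|x-x'|$. No a priori bound on $\|\nabla^2 u\|_\infty$ is ever invoked; the Lipschitz stability in $x$ comes from a contraction in $\|\cdot\|_{\cD}$ that compares two initial points directly, which is exactly what your Jacobian linearisation cannot do without already knowing $R_T<\infty$.
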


\begin{proof}
\noindent{\rm (i).}\quad We first show that the SDE \eqref{eq:forwardSDE} below admits a unique strong solution. Define $\psi(t,x): = e^{-\frac12 u(t,x)}.$ By \Cref{lem:feynmankac psi} in appendix, we have
\begin{equation}\label{eq:FeynmanKacLemma f}
\psi(t,x) = \dbE\left[e^{-\frac12 \int_0^t \big(\frac{\d F}{\d p}(m_{t-s}, x+\si W_s)- \gamma u(t-s, x+\si W_s)\big)ds} \psi_0(x+\si W_t)\right].
\end{equation}
Now consider the continuous paths space $C([0,T])$ as the canonical space. Denote by $\ol \dbF:=(\ol\cF_t)_{t\le T}$ the canonical filtration and $\ol X$ the canonical process. Let $\dbP$ be the probability measure such that $(\ol X-x)/\si$ is a $\dbP$--Brownian motion starting from the origin. 
We may define an equivalent probability measure $\dbQ$ on the canonical space via
\begin{equation}\label{eq:dQdPdensity}
 \frac{d \dbQ}{d \dbP}\Big|_{\ol\cF_T} = \Lambda_T
  := \frac{e^{-\int_0^T \frac12 \big( \frac{\d F}{\d p}(m_{T-s}, \ol X_s)- \gamma u(t-s, \ol X_s)\big) dt} \psi_0(\ol X_T)}{\psi(T,x)}.
\end{equation}
 By It\^o's formula, we may identify that
 \begin{align*}
 \dbE^\dbP\left[\Lambda_T \,|\, \ol\cF_t \right] & = \frac{e^{-\int_0^t \frac12 \big( \frac{\d F}{\d p}(m_{T-s}, \ol X_s)- \gamma u(t-s, \ol X_s)\big) dt} \psi(T-t, \ol X_t)}{\psi(T,x)} \\
  & = \exp\left(-\int_0^t \frac12 \nabla u(t-s, \ol X_s) \cdot d\ol X_s - \int_0^t \frac{\si^2}{8} |\nabla u(t-s, \ol X_s)|^2 ds \right).
\end{align*}
Using the Girsanov's theorem, we may conclude that the SDE
\begin{equation}\label{eq:forwardSDE}
 X_t =x -\int_0^t \frac{\si^2}{2}\nabla u(T- s, X_s )ds + \si W_t,
\end{equation}
admits a weak solution. In addition, since $x\mapsto \nabla u(t,x)$ is locally Lipschitz, the SDE above has the property of pathwise uniqueness. Therefore, we can conclude by Yamada--Watanabe's theorem.

\noindent {\rm (ii).}\quad Next we observe that $\nabla u$ is the classical solution to
\begin{equation}\label{eq:pde-nabla-u}
\partial_t \nabla u_t = \frac{\si^2}{2} \Delta \nabla u_t - \frac{\si^2}{2} \nabla^2 u_t \nabla u_t + \nabla \frac{\d F}{\d p}(m_t, \cdot) - \gamma \nabla u_t.
\end{equation}
By denoting
\( Y_t: = \nabla u(T-t, X_t),\)
it follows from It\^o's formula that $(X, Y)$ solves the forward-backward SDE (FBSDE for short):
\begin{align*}
\begin{cases}
d X_t = -\frac{\si^2}{2} Y_t dt + \si dW_t, & X_0 =x, \\
d Y_t = \big( \gamma Y_t - \nabla \frac{\d F}{\d p}(m_{T-t},  X_t)\big) dt + Z_t dW_t , & Y_T = \nabla u_0(X_T),
\end{cases}
\end{align*}
where $Z_t = \si\nabla^2 u(T-t, X_t)$. Introduce the norm
\[\|(Y, Z)\|_{\cD}: = \sup_{t\le T}\left\{ \dbE\left[ |Y_t|^2 + \int_t^T |Z_s|^2 ds
\right] \right\}^\frac12.\]
We are going to show that $\|(Y, Z)\|_{\cD}<\infty$, provided that $T$ is small enough.

By \Cref{lem:bound of u} and \Cref{prop:regularity-heat-equation} in appendix, we have
\[e^{-C_T(1+|x|^2)} \le \psi(t,x) \le C_T,\qquad |\nabla \psi(t,x)| \le C_T(1+|x|^2).\]
Therefore,
\[|\nabla u(t,x)| =2 \frac{|\nabla \psi|}{\psi}(t,x) \le C_T (1+|x|^2) e^{C_T|x|^2}. \]
On the other hand, by the definition of $\Lambda_T$ in \eqref{eq:dQdPdensity}, we have
\[\Lambda_T\le C_Te^{C_T(|x|^2 + \sup_{t\le T} |\ol X_t|^2)}.\]
Now we may provide the following estimate
\begin{align*}
 \dbE\left[\sup_{t\le T} |Y_t|^2\right]
 &= \dbE\left[\sup_{t\le T} |\nabla u(T-t, X_t)|^2\right]
 = \dbE^\dbP\left[ \Lambda_T \sup_{t\le T} |\nabla u(T-t, \ol X_t)|^2\right] \\
& \le C_T e^{C_T|x|^2} \dbE^\dbP\left[(1+\sup_{t\le T}|\ol X_t|^2 )e^{C_T \sup_{t\le T}|\ol X_t|^2}\right].
\end{align*}
In particular, if $T$ is small enough, we have $\dbE^\dbP\left[(1+\sup_{t\le T}|\ol X_t|^2 ) e^{C_T \sup_{t\le T}|\ol X_t|^2}\right]<\infty$.

 Moreover, by It\^o's formula, we obtain
\begin{align*}
 d |Y_t|^2
 &= \left(2\gamma |Y_t|^2-2Y_t\cdot\nabla \frac{\d F}{\d p}(m_{T-t},  X_t) + |Z_t|^2\right) dt + 2Y_t \cdot Z_t dW_t\\
 &\ge \left((2\gamma -1) |Y_t|^2 - |\nabla \frac{\d F}{\d p}(m_{T-t},  X_t)|^2 + |Z_t|^2\right) dt + 2Y_t \cdot Z_t dW_t.
\end{align*}
Define the stopping time $\t_n : = \inf\{t\ge0:~ |Z_t |\ge n\}$, and note that
\begin{multline*}
\dbE\left[\int_0^{T\wedge\t_n} |Z_t|^2 dt\right] \le \dbE[|Y_{T\wedge\t_n}|^2] -\dbE[|Y_0|^2] \\ 
+ \dbE\left[\int_0^{T\wedge\t_n} \Big( (1-2\gamma)|Y_t|^2 + |\nabla \frac{\d F}{\d p}(m_{T-t},  X_t)|^2 dt \Big)\right]. 
\end{multline*}
Since we have proved $ \dbE\left[\sup_{t\le T} |Y_t|^2\right]<\infty $, by monotone and dominated convergence theorem, we obtain
\[\dbE\left[\int_0^{T } |Z_t|^2 dt\right] \le \dbE[|Y_{T }|^2] + \dbE\left[\int_0^{T } \Big((1-2\gamma)|Y_t|^2 + |\nabla \frac{\d F}{\d p}(m_{T-t},  X_t)|^2 dt \Big)\right] <\infty. \]
Therefore, we have $\|(Y, Z)\|_{\cD} <\infty$.

\noindent {\rm (iii).}\quad It is known (see e.g. \cite[Theorem I.5.1]{MY99}) that there exists $\d>0$ only depending on $\ol\kappa,  \ol\eta_0, L_0, L_G$ such that for $T\le \d$ the process $(Y,Z)$ here is the unique solution to the FBSDE such that $\|(Y,Z)\|_{\cD}<\infty$. Moreover, by standard a priori estimate (again see \cite[Theorem I.5.1]{MY99}) we may find a constant $C\ge 0$ only depending on $\ol\kappa,  \ol\eta_0, L_0, L_G$ such that for $(Y', Z')$ solution to the FBSDE above starting from $X_0 =x'$ we have
\[\|(Y, Z)-(Y', Z')\|_{\cD} \le C |x-x'|, \quad\mbox{for}\quad T\le \d. \]
In particular,  it implies that
\begin{equation*}
 \left|Y_0-Y_0'\right| =\left|\nabla u(T,x) - \nabla u(T,x')\right|\leq C|x-x'|,
\end{equation*}
so that $\sup_{T\le \delta} \|\nabla^2 u(T,\cdot)\|_\infty <\infty.$
\end{proof}

\begin{prop}\label{prop:V-convexity}
Let $v$ be the classical solution to \cref{eq:HJB-strongly-convex}. It holds:
\begin{enumerate}

 \item[\rm(i)] The function $v_t$ is $\eta_t$--convex,  \ie, $\nabla^2 v_t \ge \eta_t I_d,$ with
\begin{equation}
\label{eq:theta-t-evolution}
\frac{d \eta_t}{d t} = \ul\kappa - \gamma\eta_t - \si^2 \eta_t^2, \qquad \eta_0 = \ul\eta_0.
\end{equation}
In particular,  $v_t$ is  $\ul{\eta}$--convex with $\ul{\eta} := \min(\eta_0,\frac{\sqrt{\gamma^2+ 4\si^2 \ul\kappa} -\gamma}{2\si^2}).$

\item[\rm (ii)] The Hessian of $v$ is bounded uniformly w.r.t. $t>0$ and $m.$
\end{enumerate}
\end{prop}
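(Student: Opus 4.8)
The plan is to analyze the evolution of the Hessian of $v$ along the flow of the HJB equation \eqref{eq:HJB-strongly-convex} by differentiating the equation twice in space. Writing $A(t,x) := \nabla^2 v(t,x)$ and differentiating \eqref{eq:HJB-strongly-convex} twice in $x$, one obtains a matrix Riccati-type equation of the form
\[
\partial_t A = \frac{\si^2}{2}\Delta A - \frac{\si^2}{2}\nabla v\cdot\nabla A - \si^2 A^2 + \nabla^2 g - \gamma A.
\]
For part~(i), I would fix a unit vector $e$ and consider $\phi(t,x):= e^\top A(t,x) e$, whose evolution reads $\partial_t\phi = \frac{\si^2}{2}\Delta\phi - \frac{\si^2}{2}\nabla v\cdot\nabla\phi - \si^2 e^\top A^2 e + e^\top\nabla^2 g\, e - \gamma\phi$. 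Using $e^\top A^2 e \ge (e^\top A e)^2 = \phi^2$ (Cauchy--Schwarz on the symmetric matrix $A$) and $e^\top\nabla^2 g\,e \ge \ul\kappa$ from Assumption~\ref{assum:decomposition-F-derivative}(i), I get the differential inequality $\partial_t\phi \ge \frac{\si^2}{2}\Delta\phi - \frac{\si^2}{2}\nabla v\cdot\nabla\phi - \si^2\phi^2 + \ul\kappa - \gamma\phi$, with $\phi(0,\cdot)\ge\ul\eta$ by Assumption~\ref{assum:initialdistribution}(iii). The natural move is then a comparison argument: let $\theta_t$ solve the ODE \eqref{eq:theta-t-evolution}, which is a spatially constant subsolution of the same parabolic inequality (the diffusion and drift terms vanish on constants, and $f(\theta):=\ul\kappa-\gamma\theta-\si^2\theta^2$ is decreasing in $\theta$, so the ODE comparison is stable), and conclude $\phi(t,x)\ge\theta_t$ for all $t,x$ by a parabolic maximum principle. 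To run the maximum principle cleanly on $\dbR^d$ I would invoke the regularity and growth bounds already established: $v\in C^3$ with $\nabla v$ of linear growth and $\nabla^2 v$ locally bounded (Corollary~\ref{cor:uniform in time gradient}, \eqref{eq:roughestimate nabla u}, and Remark~\ref{rmk:u_to_v} applied to the $G=0$ case), plus the short-horizon bound $\sup_{T\le\delta}\|\nabla^2 u(T,\cdot)\|_\infty<\infty$ from Lemma~\ref{lem:FBSDEshorthorizon} (equivalently for $v$), which gives a finite starting point for the comparison and, together with a localization/barrier argument at spatial infinity, legitimizes the comparison principle. Finally, solving the Riccati ODE \eqref{eq:theta-t-evolution} explicitly: its unique positive stationary point is $\theta_\infty = \frac{-\gamma+\sqrt{\gamma^2+4\si^2\ul\kappa}}{2\si^2}>0$, and since $f$ is strictly decreasing and positive on $[0,\theta_\infty)$, the solution $\theta_t$ with $\theta_0=\ul\eta>0$ stays bounded below by $\min(\ul\eta,\theta_\infty)>0$ for all $t\ge0$; this yields strict convexity uniform in $t$ and $m$, since none of the constants $\ul\kappa,\gamma,\si,\ul\eta$ depend on $m$.

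For part~(ii), I would symmetrically produce an \emph{upper} bound on the Hessian. The obstacle here is that the quadratic term $-\si^2 A^2$ has a favorable sign for lower bounds but an unfavorable one for upper bounds — one cannot simply bound $e^\top A^2 e$ from above by $\phi^2$. Instead I would use the eigenvalue structure: let $\Lambda_t := \|\nabla^2 v(t,\cdot)\|_\infty$ (finite for $t\le\delta$ by Lemma~\ref{lem:FBSDEshorthorizon}), and track the largest eigenvalue $\mu(t,x)$ of $A(t,x)$. At an interior spatial maximum of $\mu$ the diffusion term is $\le 0$, the drift term vanishes, and for the top eigenvector $e$ one has $e^\top A^2 e = \mu^2$, $e^\top\nabla^2 g\,e\le\ol\kappa$, so $\partial_t\mu \le -\si^2\mu^2 + \ol\kappa - \gamma\mu$ at such a point, which forces $\mu$ to decrease once it exceeds the positive root $\Theta_\infty := \frac{-\gamma+\sqrt{\gamma^2+4\si^2\ol\kappa}}{2\si^2}$. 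Hence $\Lambda_t \le \max(\Lambda_\delta, \Theta_\infty)$ for all $t\ge\delta$ by a comparison with the corresponding ODE, and $\Lambda_t\le\Lambda_\delta$ on $[0,\delta]$ — again uniformly in $m$ since $\Lambda_\delta$ depends only on $\ul\kappa,\ol\kappa,\ol\eta$ via Lemma~\ref{lem:FBSDEshorthorizon}. Making the "top eigenvalue maximum principle" rigorous on the whole space (eigenvalues are only Lipschitz, not smooth, and one must handle spatial infinity) is the main technical point; I would either regularize by working with $\trace(A^{2k})^{1/2k}$ and letting $k\to\infty$, or apply the scalar maximum principle to $\phi_e = e^\top A e$ uniformly over unit vectors $e$ together with the linear-growth/Gaussian bounds on $v$ that guarantee $\nabla^2 v$ vanishes at infinity in a controlled way (via \eqref{eq:roughestimate nabla u} and the Gaussian lower bound on $\psi$). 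The upper bound in part~(ii) is exactly what feeds the decomposition $p_t=e^{-(v_t+w_t)}$ with two-sided Hessian control in Theorem~\ref{thm:wellposedness}, so I would state it with the explicit constant $\ol c := \max(\Lambda_\delta,\Theta_\infty)$.
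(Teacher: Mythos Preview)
Your argument for part~(i) contains a sign error that breaks the comparison. From $e^\top A^2 e \ge (e^\top A e)^2 = \phi^2$ you get $-\tfrac{\si^2}{2}e^\top A^2 e \le -\tfrac{\si^2}{2}\phi^2$, so the resulting differential inequality is $\partial_t\phi \le \tfrac{\si^2}{2}\Delta\phi - \tfrac{\si^2}{2}\nabla v\cdot\nabla\phi - \tfrac{\si^2}{2}\phi^2 + e^\top\nabla^2 g\,e - \gamma\phi$, not $\ge$. (Incidentally, differentiating $-\tfrac{\si^2}{4}|\nabla v|^2$ twice produces $-\tfrac{\si^2}{2}A^2$, not $-\si^2 A^2$.) Thus for a \emph{fixed} direction $e$ the quadratic term is favorable for \emph{upper} bounds and unfavorable for \emph{lower} bounds --- exactly the opposite of what you wrote. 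Your eigenvalue trick from part~(ii) is what is actually needed in part~(i): only when $e$ is the eigenvector for $\lambda_{\min}(A)$ does $e^\top A^2 e = \phi^2$ hold with equality, giving the desired lower differential inequality at that point. Conversely, for part~(ii) the fixed-$e$ argument works directly, since $\partial_t\phi_e \le \cdots - \tfrac{\si^2}{2}\phi_e^2 + \ol\kappa - \gamma\phi_e$ for every $e$, and comparison with the Riccati ODE with $\ol\kappa$ bounds $\sup_e\phi_e$. A secondary issue: writing $\Delta A$ classically requires $v\in C^4$, whereas the paper only establishes $v\in C^3$.

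The paper circumvents both the sign obstruction and the regularity issue by working probabilistically rather than with the matrix Riccati PDE. For the lower bound it couples two characteristic flows $X,X'$ starting at $x,x'$, sets $Y_t=\nabla v(T-t,X_t)$, and studies $\widehat Y_t := \dfrac{(X_t-X'_t)\cdot(Y_t-Y'_t)}{|X_t-X'_t|^2}$; here the direction $e_t=(X_t-X'_t)/|X_t-X'_t|$ is not fixed but co-evolves with the dynamics, and the It\^o computation produces a BSDE for $\widehat Y$ whose drift, after dropping a nonpositive term, is bounded above by $\gamma\widehat Y + \si^2\widehat Y^2 - \ul\kappa$. Backward comparison with $\widehat\theta_t=\theta_{T-t}$ then yields $\widehat Y_0\ge\theta_T$. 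This is why the paper's Riccati ODE carries the coefficient $\si^2\theta^2$ rather than $\tfrac{\si^2}{2}\theta^2$. For the upper bound the paper does not use the Riccati structure at all: once $v(t,\cdot)$ is $\ul\theta$-convex, the flow contracts as $|X_s-X'_s|\le e^{-\si^2\ul\theta s/2}|x-x'|$, and the Feynman--Kac representation of $\nabla v$ together with Lipschitz continuity of $\nabla g$ and $\nabla v_0$ gives a $T$-independent Lipschitz bound on $\nabla v(t,\cdot)$; an induction in steps of size $\delta$ (from Lemma~\ref{lem:FBSDEshorthorizon}) extends both bounds to all $t\ge 0$.
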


\begin{proof}
We divide the following discussion into $3$ steps.

 \noindent {\rm (i).} \quad We first prove the strict convexity of the solution $v$ on a short horizon. Fix $T:= \d$ small enough so that, thanks to \Cref{lem:FBSDEshorthorizon}, $\nabla^2 v$ is uniformly bounded on $(0,T]$. We shall prove that not only $\nabla^2 v$ has a positive lower bound, but also the bound does not depend on $T$.

 As in Step {\rm (i)} of the proof of \Cref{lem:FBSDEshorthorizon}, we may define the strong solution 
 \[X_t = x -\int_0^t \frac{\si^2}{2} \nabla v(T-s,X_s) ds +\si W_t.\] 
 Further define $Y_t :=\nabla v(T-t, X_t)$ and $Z_t:= \si \nabla^2 v(T-t, X_t)$ so that $\|(Y, Z)\|_{\cD}<\infty$ and $(Y,Z)$ is the unique solution to the FBSDE on the short horizon $[0,T]$:
 \begin{align*}
 \begin{cases}
 d X_t = -\frac{\si^2}{2} Y_t dt + \si dW_t, & X_0 =x, \\
 d Y_t = \big( \gamma Y_t - \nabla g( X_t) \big)dt + Z_t dW_t , & Y_T = \nabla v_0( X_T).
 \end{cases}
 \end{align*}
 Define $(X',Y', Z')$ similarly with $X'_0 =x'$, and further denote by $\d X_t := X_t - X'_t, ~ \d Y_t := Y_t - Y'_t, ~ \d Z_t := Z_t - Z'_t $. Note that due to the uniqueness of the solution to the FBSDE, we have $\d X_t = \d Y_t = \d Z_t = 0$ for $t\ge \tau:=\inf\{t\ge 0: ~ \d X_t =0\}$. By It\^o's formula, it is easy to verify that
 \begin{multline*}
 d \frac{\d X_t \cdot \d Y_t}{|\d X_t|^2}
 = \left( -\frac{\si^2|\d Y_t|^2 }{2{|\d X_t|^2}}
 +\gamma \frac{\d X_t\cdot \d Y_t}{|\d X_t|^2}
 - \frac{\d X_t \cdot \big(\nabla g(X_t) - \nabla g(X'_t)\big)}{|\d X_t|^2} + \si^2\frac{|\d X_t \cdot \d Y_t|^2 }{|\d X_t|^4} \right) dt\\
 + \frac{\d X_t \cdot \d Z_t dW_t}{|\d X_t|^2 }.
 \end{multline*}
 Therefore, the pair $(\hat Y_t, \hat Z_t):= \left(\frac{\d X_t \cdot \d Y_t}{|\d X_t|^2},\frac{\d X_t^\top \d Z_t }{|\d X_t|^2 } \right)$ solves the BSDE:
 \[ d \hat Y_t = \left( -\frac{\si^2|\d Y_t|^2 }{2{|\d X_t|^2}} +\gamma \hat Y_t - \frac{\d X_t \cdot \big(\nabla g(X_t) - \nabla g(X'_t)\big)}{|\d X_t|^2} +\si^2 \hat Y_t^2 \right)dt + \hat Z_t dW_t. \]
 According to \Cref{lem:FBSDEshorthorizon}, the process $\hat Y$ is bounded on $[0,T]$ and so is the coefficient in front of $dt$ above. By the It\^o isometry, we clearly have $\dbE[\int_0^T |\hat Z_t|^2 dt] <\infty$. 
 
 We aim at providing a lower bound for $\hat Y$. Consider the Riccati equation \eqref{eq:theta-t-evolution} and note that the solution $(\eta_t)_{t\ge 0}$ evolves monotonously from the initial condition $\eta_0>0$ to the positive equilibrium  $\eta^*:=(\sqrt{\gamma^2+ 4\si^2 \ul\kappa} -\gamma)/2\si^2.$ In particular, it holds
 \begin{equation}\label{eq:eta_bound}
   \ul\eta=\min(\eta_0,\eta^*) \leq \eta_t \le \max(\eta_0,\eta^*).
 \end{equation}
 Define $\hat \eta_t := \eta_{T-t} $ for $t\le T$ so that
 \[d \hat \eta_t = (-\ul \kappa + \gamma \hat \eta_t +\si^2 \hat\eta^2_t )dt, \quad \hat\eta_T \le \hat Y_T. \]
 Since $g$ is $\ul \kappa$--convex, we have
 \begin{align*}
 d (\hat Y_t -\hat\eta_t)
 &= \left( -\frac{\si^2|\d Y_t|^2 }{2{|\d X_t|^2}}- \frac{\d X_t \cdot \big(\nabla g(X_t) - \nabla g(X'_t)\big)}{|\d X_t|^2} + \ul \kappa + \gamma (\hat Y_t -\hat \eta_t) +\si^2 (\hat Y_t^2- \hat\eta_t^2) \right)dt + \hat Z_t dW_t\\
 &\le \left(\gamma (\hat Y_t -\hat \eta_t) + \si^2 (\hat Y_t+ \hat\eta_t) (\hat Y_t- \hat\eta_t) \right) dt + \hat Z_t dW_t
 \end{align*}
 Since $\hat Y_t, ~\hat\eta_t$ are both bounded and $\dbE[\int_0^T |\hat Z_t|^2 dt] <\infty$, it follows from the standard comparison principle for BSDE that $\hat Y_t - \hat \eta_t \ge 0$, \ie, the function $v_t$ is $\eta_t$--convex for $t\in [0,T]$.

\noindent {\rm (ii).}\quad We shall improve the bound of $|\nabla^2 v|$ to get a bound independent of the horizon $T = \d$. Note that $\nabla v$ satisfies the equation
\[\partial_t \nabla v_t = \frac{\sigma^2}{2} \Delta \nabla v_t - \frac{\sigma^2}{2} \nabla^2 v_t \nabla v_t + \nabla g -\gamma \nabla v_t.\]
Thus it admits the probabilistic representation
\begin{gather*}
 \nabla v (t,x) = \dbE\left[\int_0^t e^{-\gamma s}\nabla g(X_s) ds + e^{-\gamma t}\nabla v_0(X_t)\right],
 \intertext{with}
 X_s = x -\int_0^s\frac{\si^2}{2} \nabla v(t-r, X_r) dr + \si W_s.
\end{gather*}
Let $X'$ be the solution to the SDE above with $X'_0=x'.$ Since $\nabla g$ and $\nabla v_0$ are both Lipschitz continuous, we have
\begin{equation}\label{eq:diff nabla v}
|\nabla v (t,x) -\nabla v (t,x')|
\le \dbE\left[\bar{\kappa} \int_0^t |X_s - X'_s| ds + \bar{\eta}_0|X_t - X'_t|\right],
\end{equation}
Now recall that we have proved in Step {\rm (i)} that the function $v_s$ is $\eta_s$--convex for $s\in [0,t]$ so that 
\begin{align*}
 \frac 12 d \left|X_s - X'_s\right|^2 &= \left(X_s - X'_s \right) \cdot \left( dX_s - dX'_s \right) \\
 &= - \frac{\sigma^2}{2} \left(X_s - X'_s \right) \cdot \left(\nabla v \left(t-s, X_s\right) - \nabla v \left(t-s, X'_s\right)\right) ds \\
 &\leq - \frac{\sigma^2 \eta_{t-s}}{2} \left|X_s - X'_s\right|^2 ds,
\end{align*}
Furthermore recall that  $\eta_s\geq \ul\eta$ for all $s\geq 0$ by \cref{eq:eta_bound} so that
\begin{equation}\label{eq:flow-lipschitz}
 |X_s - X'_s|\le e^{-\frac{\si^2\ul\eta s}{2} }|x-x'|.
\end{equation}
Together with \cref{eq:diff nabla v}, we obtain
\[|\nabla v (t,x) -\nabla v (t,x')| \le C\left(1 + \frac{2}{\si^2 \ul\eta }\right)|x-x'|.\]
Therefore $|\nabla^2 v(t,\cdot)| \le C\left(1 + \frac{2}{\si^2 \ul\eta }\right)$, in particular the bound does not depend on $T=\d$.

\noindent {\rm (iii).}\quad By the result of Step {\rm (ii)}, we know that 
$\nabla^2 v(\d, \cdot)$ is bounded and the bound does not depend on $\d$. Together with \Cref{lem:FBSDEshorthorizon}, we conclude that $\nabla^2 v$ is bounded on $[\d, 2\d]$, and further deduce that $v_t$ is $\eta_t$--convex and $\nabla^2 v$ has a $\d$-independent bound again on $[\d, 2\d]$ thanks to the results of Step {\rm (i)--(ii)}. Therefore the desired result follows from induction.
\end{proof}

\begin{prop}
\label{prop:W-Lipschitz}
 Let $w$ be the classical solution to \cref{eq:HJB-MF-part}. 
 Then the function $x\mapsto w(t,x)$ is Lipschitz continuous uniformly w.r.t. $t \geq 0$ and $m.$
\end{prop}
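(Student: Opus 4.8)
The plan is to obtain a closed, horizon- and $m$-independent bound on $\|\nabla w(t,\cdot)\|_\infty$ from a Feynman--Kac representation of $\nabla w$ in which the strong convexity of $v$ (Proposition~\ref{prop:V-convexity}) and the damping $-\gamma w$ produce an exponentially decaying multiplier. Differentiating \eqref{eq:HJB-MF-part} in the space variable and using that $(\nabla u\cdot\nabla)\nabla w = \nabla^2 w\,\nabla v+\nabla^2 w\,\nabla w$ with $u=v+w$, one checks that $\nabla w$ solves the \emph{linear} (in $\nabla w$) equation
\[
\partial_t\nabla w = \tfrac{\si^2}{2}\Delta\nabla w - \tfrac{\si^2}{2}(\nabla u\cdot\nabla)\nabla w - \Big(\tfrac{\si^2}{2}\nabla^2 v + \gamma I_d\Big)\nabla w + \nabla G(m_t,\cdot).
\]
On a short horizon $[0,T]$ with $T\le\delta$ ($\delta$ from Lemma~\ref{lem:FBSDEshorthorizon}, so that $\nabla w$ and $\nabla^2 w$ are finite and at most polynomially growing on $(0,T]$), I would let $X$ solve $dX_s = -\tfrac{\si^2}{2}\nabla u(T-s,X_s)\,ds + \si\,dW_s$, constructed as in step~(i) of the proof of Lemma~\ref{lem:FBSDEshorthorizon}, and introduce the pathwise matrix flow $\Gamma$ solving $\dot\Gamma_s = -\Gamma_s\big(\tfrac{\si^2}{2}\nabla^2 v(T-s,X_s)+\gamma I_d\big)$ with $\Gamma_0=I_d$. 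Applying It\^o's formula to $\Gamma_s\,\nabla w(T-s,X_s)$ cancels the transport and the zeroth-order terms, and --- once the resulting stochastic integral is checked to be a true martingale (by the polynomial-in-$x$ control of $\nabla w,\nabla^2 w$ and the Gaussian tails of $X$, as in step~(ii) there) --- one obtains
\[
\nabla w(T,x) = \dbE\big[\Gamma_T\,\nabla w_0(X_T)\big] + \int_0^T \dbE\big[\Gamma_s\,\nabla G(m_{T-s},X_s)\big]\,ds.
\]

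The key point is the exponential decay of $\Gamma$. Since $\nabla^2 v(t,\cdot)\ge\ul c\,I_d$ for all $t$ with some $\ul c>0$ independent of $m$ by Proposition~\ref{prop:V-convexity}, for any fixed $\eta$ the curve $\zeta_s^\top:=\eta^\top\Gamma_s$ satisfies $\tfrac{d}{ds}|\zeta_s|^2 = -2\,\zeta_s^\top\big(\tfrac{\si^2}{2}\nabla^2 v(T-s,X_s)+\gamma I_d\big)\zeta_s \le -2\beta|\zeta_s|^2$ with $\beta:=\tfrac{\si^2\ul c}{2}+\gamma>0$, hence $\|\Gamma_s\|_{\mathrm{op}}\le e^{-\beta s}$. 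Combined with $\|\nabla G(m,\cdot)\|_\infty\le L_G$ (Assumption~\ref{assum:decomposition-F-derivative}(ii)) and $\|\nabla w_0\|_\infty<\infty$ (Assumption~\ref{assum:initialdistribution}(i)), the representation yields
\[
|\nabla w(T,x)| \le e^{-\beta T}\|\nabla w_0\|_\infty + L_G\!\int_0^T\! e^{-\beta s}\,ds \le \max\!\Big(\|\nabla w_0\|_\infty,\,\tfrac{L_G}{\beta}\Big)=:C_w ,
\]
a bound independent of $T\le\delta$, of $x$, and of $m$.

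It remains to remove the restriction $T\le\delta$ by iteration, exactly as in the proof of Proposition~\ref{prop:V-convexity}: restarting the same construction on $[k\delta,(k+1)\delta]$ with initial datum $w(k\delta,\cdot)$ --- which, together with $v(k\delta,\cdot)$, still obeys the structural hypotheses since $\nabla^2 v$ is controlled uniformly in $t$ and $m$ by Proposition~\ref{prop:V-convexity} --- propagates the estimate, because each step feeds back only through $\|\nabla w(k\delta,\cdot)\|_\infty$; induction over $k$ gives $\|\nabla w(t,\cdot)\|_\infty\le C_w$ for all $t\ge0$. For the iteration to proceed with a fixed step $\delta$ one also checks that $\|\nabla^2 w(k\delta,\cdot)\|_\infty$ stays bounded uniformly in $k$, which is obtained by running the same scheme on the once-more-differentiated equation --- its representation for $\nabla^2 w$ carries a product of two copies of the decaying flow $\Gamma$ and the source $\nabla^2 G$, admissible because $\nabla G$ is Lipschitz (Assumption~\ref{assum:decomposition-F-derivative}(iii)). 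I expect the real work to lie not in this algebra but in the bookkeeping: verifying the true-martingale property and the well-posedness of the auxiliary SDE on each short interval, and checking that no constant picks up a dependence on the horizon or on $m$ so that the iteration closes --- all of which is supplied by Lemma~\ref{lem:FBSDEshorthorizon}, Proposition~\ref{prop:V-convexity}, and the a priori regularity of Appendix~\ref{sec:regularity}. (For $\gamma=0$ one could instead linearize via $\phi=e^{-w/2}$, but the matrix-multiplier argument handles every $\gamma\ge0$, with a bound that if anything improves as $\gamma$ grows.)
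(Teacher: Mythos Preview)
Your approach is genuinely different from the paper's and contains a gap in the iteration step.

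\textbf{What the paper does.} The paper does not differentiate the PDE for $w$. Instead it uses that \eqref{eq:HJB-MF-part} is itself an HJB equation with the \emph{uncontrolled} drift $-\tfrac{\si^2}{2}\nabla v$ and control entering additively:
\[
w(T,x)=\inf_{\alpha}\dbE\Big[\int_0^T e^{-\gamma s}\big(G(m_{T-s},X^\alpha_s)+\tfrac{\si^2}{4}|\alpha_s|^2\big)ds+e^{-\gamma T}w_0(X^\alpha_T)\Big],
\]
with $dX^\alpha_s=-\tfrac{\si^2}{2}\big(\nabla v(T-s,X^\alpha_s)+\alpha_s\big)ds+\si dW_s$. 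Because $\nabla v$ is globally Lipschitz and uniformly $\ul\theta$-monotone by Proposition~\ref{prop:V-convexity}, this SDE is well posed for \emph{every} $T$ and any two trajectories driven by the \emph{same} control contract: $|Y^\alpha_s-X^\alpha_s|\le e^{-\si^2\ul\theta s/2}|y-x|$. Comparing value functions then gives a Lipschitz constant for $w(T,\cdot)$ that is independent of $T$ and $m$, with no short-horizon restriction and no iteration.

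\textbf{Where your argument breaks.} Your Feynman--Kac representation for $\nabla w$ and the decay $\|\Gamma_s\|\le e^{-\beta s}$ are correct on $[0,\delta]$. The gap is in the bootstrap. To reuse Lemma~\ref{lem:FBSDEshorthorizon} on $[k\delta,(k+1)\delta]$ with a \emph{fixed} step $\delta$ you need a uniform-in-$k$ bound on $\|\nabla^2 w(k\delta,\cdot)\|_\infty$ (this is exactly the $\ol\eta$ entering $\delta$). Your proposed remedy --- run the same multiplier scheme on the once-more-differentiated equation --- does not close: differentiating the zeroth-order term $\nabla^2 v\,\nabla w$ produces $\nabla^3 v$, and differentiating the representation in $x$ makes $\partial_x\Gamma_s$ appear, whose evolution also involves $\nabla^3 v$ along the path; no bound on $\nabla^3 v$ is available at this stage. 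Moreover the tangent flow $\partial_x X_s$ is governed by $\nabla^2 u=\nabla^2 v+\nabla^2 w$, for which there is no sign information, so it need not decay. Finally, in the paper's logical order a uniform bound on $\nabla^2 u$ (Lemma~\ref{lem:hessian-u-bounded}) is obtained \emph{after} Proposition~\ref{prop:W-Lipschitz} and uses it through the reflection coupling, so you cannot invoke it here.

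The cleanest fix is to switch to the paper's route: represent $w$ (not $\nabla w$) via stochastic control with drift $\nabla v$ alone, so that only Proposition~\ref{prop:V-convexity} is needed and the estimate holds for all $T$ at once.
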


\begin{proof}
We consider the following stochastic control problem. Let $(\Omega, \mathcal{F}, \dbP, \dbF)$ be a filtered probability space, and $W$ be a $(\dbP,\dbF)$-Brownian motion. Denote by $\cA$ the collection of admissible control process, \textit{i.e.}, $\alpha$ is progressively measurable and $\dbE\left[\int_0^t |\alpha_t|^2 dt\right]<\infty$.
Then it follows from standard dynamic programming arguments that 
\[
 w(t,x) = \inf_{\a\in\cA}\Expect \left[\int_0^t e^{-\gamma s}\left(G\left(m_{t-s}, X^{\alpha}_s\right) + \frac{\sigma^2}{4}|\alpha_s|^2\right) ds + e^{-\gamma t} w_0\left(X^{\alpha}_t\right)\right],
\]
where $X^{\alpha}$ stands for the strong solution to
\[dX^{\alpha}_s = - \frac{\sigma^2}{2} \left( \nabla v \left(t - s, X^{\alpha}_s \right) + \alpha_s \right)ds + \sigma dW_s, \quad X^{\alpha}_0 = x.
\]
Denote by $Y^{\alpha}$ the solution to the SDE above with $Y^{\alpha}_0=y.$ Then it holds
\begin{equation}\label{eq:diff valuefunctions}
 \left|w\left(t, y\right) - w\left(t, x\right)\right| \leq \sup_{\alpha} \Expect \left[ L_G \int_0^t e^{-\gamma s} \left| Y^{\alpha}_s - X^{\alpha}_s\right| ds + L_0 e^{-\gamma t} \left|Y^{\alpha}_t - X^{\alpha}_t\right|\right].
\end{equation}
Using the convexity of $ v_s$ from \Cref{prop:V-convexity}, we obtain by the same argument as~\cref{eq:flow-lipschitz} that 
\[ \left|Y^{\alpha}_s - X^{\alpha}_s \right| \leq e^{-\frac{\si^2\ul\eta s}{2} }\left|y-x \right|.\]
Together with \cref{eq:diff valuefunctions}, we can find a \((t, m)\)--independent constant \(L>0\) such that \[|w\left(t, y\right) - w\left(t, x\right)| \leq L \left|y - x\right|.\]
\end{proof}

 Given the decomposition of $u$ as the sum of a convex and a Lipschitz function,   we shall also prove that the Hessian of $u$ is bounded uniformly in time which is clearly an improvement over \Cref{lem:FBSDEshorthorizon}.

\begin{prop}\label{lem:hessian-u-bounded}
 Let $u$ be the classical solution to \cref{eq:fixpoint_m-to-u}.  Then the Hessian of $u$ is bounded uniformly w.r.t. $t>0$ and $m.$
\end{prop}

\begin{proof}
Recall that $\nabla u$ satisfies \cref{eq:pde-nabla-u} so that,
by Feynman--Kac's formula, it admits the probabilistic representation
\begin{gather}\label{eq:nabla u FeynmanKac}
  \nabla u(t,x) = \dbE\left[\int_0^t e^{-\gamma s} \nabla \frac{\delta F}{\delta p}(m_{t-s}, X_s)ds + e^{-\gamma t}\nabla u_0(X_t)\right], 
 \intertext{with} 
  X_s = x - \frac{\si^2}{2} \int_0^s \nabla u(t-r, X_r) dr + \si W_s.\notag
\end{gather}
Let us prove that $x\mapsto \nabla u(t,x)$ is Lipschitz continuous with a Lipschitz constant independent of $t$ and $m$. Denote by $Y$ the solution to the SDE above with $Y_0=y.$
It follows from  the reflection coupling \Cref{thm:reflectioncoupling} in appendix that
for $p^X_s :=\cL(X_s)$ and $p^Y_s := \cL(Y_s),$
\[ \cW_1(p^X_s, p^Y_s) \le C e^{- c \si^2 s} \cW_1(p^X_0, p^Y_0), \quad\mbox{for all $s\ge 0$}.\]
Note that the drift $\nabla u = \nabla v + \nabla w$ satisfies \Cref{assum:reflectioncoupling} since $v$ is $\ul\eta$--convex and $\nabla w$ is bounded, see \Cref{rem:decom-drift}.
Together with \cref{eq:nabla u FeynmanKac} and the fact that $\nabla \frac{\delta F}{\delta p}(p,\cdot)$ and $\nabla u_0$ are uniformly Lipschitz,  we have by Kantorovitch duality that
\[|\nabla u(t,x) - \nabla u(t,y)|
\le C \left(\int_0^t \cW_1(p^X_s , p^Y_s) ds + \cW_1(p^X_t, p^Y_t)\right)\le C |x-y|,\]
where the constant $C$ does not depend on $t$ and $m$.
\end{proof}

\subsection{Proof of Theorem \ref{thm:wellposedness}}
\label{sec:proof-wellposedness}

\begin{proof}[Proof of \Cref{thm:wellposedness}]
In view of Proposition~\ref{prop:hjb-main}, it is enough to show that the mapping~\eqref{eq:contraction} $(m_t)_{t\in [0,T]}\mapsto (p_t)_{t\in [0,T]}$ is a contraction for $T$ small enough, where $p_t = e^{-u_t}/\int {e^{-u_t}}$ with $u$ the solution to \cref{eq:fixpoint_m-to-u}. This contraction property relies essentially on a reflection coupling argument established in Appendix~\ref{sec:reflectioncoupling} which follows from the decomposition of $u$ as the sum of a convex and a Lipschtz function.

\noindent {\rm (i).}\quad Let $(\tilde{m}_t)_{t\in [0,T]}$ be another flow of probability measures satisfying \Cref{assum:mLipschitz}, and use it to define the function $\tilde{u}$ as in \cref{eq:fixpoint_m-to-u}. Denote by $\d u:= u -\tilde{u}$. Using the stability result for the HJB equation \eqref{eq:fixpoint_m-to-u} proved in \Cref{prop:HJB stability} below,  we obtain
\begin{equation}\label{eq:estimate nabla u by m}
 \sup_{t\le T} \|\nabla \d u(t, \cdot) \|_\infty \le T C_T\sup_{t\le T}\cW_1(m_t, \tilde{m}_t).
\end{equation}

\noindent {\rm (ii).}\quad Further define the probability density $\tilde{p}_t = e^{-\tilde{u}_t}/\int {e^{-\tilde{u}_t}}$. Note that $p_t$ and $\tilde{p}_t$ are the invariant measures of the diffusion processes
\begin{equation*}
 dX_s = -\nabla u(t, X_s)ds + \sqrt{2} dW_s, \qquad d\tilde{X}_s = -\nabla \tilde{u}(t, \tilde{X}_s)ds + \sqrt{2} dW_s,
\end{equation*}
respectively. Denote by $p_{t,s} := \cL(X_s)$ and $\tilde{p}_{t,s} : = \cL(\tilde{X}_s)$ the marginal distributions, and assume that $p_{t,0} = \tilde{p}_{t,0}=p_0$. 
By Proposition~\ref{prop:hjb-main} and \Cref{rem:decom-drift},  we may apply the reflection coupling in \Cref{thm:reflectioncoupling} in appendix to obtain
\[\cW_1\big(p_{t,s}, \tilde{p}_{t,s}\big) \le Ce^{-c s} \int_0^s e^{c r} \|\nabla \d u(t, \cdot)\|_\infty dr.\]
Let $s\rightarrow \infty$ on both sides. Since $\lim_{s\rightarrow\infty}\cW_1(p_{t,s}, p_t) = 0$ and $\lim_{s\rightarrow\infty}\cW_1(\tilde{p}_{t,s}, \tilde{p}_t) = 0$ by \Cref{rem:ergodicity}, we deduce that
\[\cW_1\big(p_t, \tilde{p}_t\big) \le C\|\nabla \d u(t, \cdot)\|_\infty. \]

\noindent {\rm (iii).}\quad Together with \cref{eq:estimate nabla u by m}, we finally obtain
\[\sup_{t\le T} \cW_1\big(p_t, \tilde{p}_t\big) \le TC_T \sup_{t\le T}\cW_1\big(m_t, \tilde{m}_t\big). \]
Therefore, given $T$ small enough, the mapping $(m_t)_{t\le T}\mapsto (p_t)_{t\le T}$ is a contraction under the metric $\sup_{t\le T} \cW_1(\cdot_t, \cdot_t)$.
\end{proof}

The following lemma shows that the gradient $\nabla u$ of the solution to the HJB equation \eqref{eq:fixpoint_m-to-u} is stable with respect to $(m_t)_{t\in [0,T]}$ as needed for the proof of Theorem~\ref{thm:wellposedness} above, as well as with respect to $\nabla u_0$  for later use.

\begin{lem}\label{prop:HJB stability}
Let $\tilde{u}$ be the classical solution to \cref{eq:fixpoint_m-to-u} corresponding to the flow of distribution $\tilde m$ satisfying Assumption~\ref{assum:mLipschitz} and the initial value $\tilde u_0$ satisfying \Cref{assum:initialdistribution}.
Then we have the following stability results:
\begin{itemize}
 \item[\rm (i)] If $\nabla u_0 =\nabla \tilde u_0$, then
 \( \|\nabla \d u(t, \cdot)\|_\infty \le C_t \int_0^t \cW_1(m_s, \tilde m_s)ds.\)

 \item[\rm (ii)] Otherwise 
  \(\|\nabla \d u (t,\cdot)\|_{(2)} \le C_t \left( \int_0^t \cW_1(m_s, \tilde m_s)ds + \|\nabla\d u_0\|_{(2)} \right).\)

\end{itemize}
\end{lem}

\begin{proof}
Similiar to~\cref{eq:nabla u FeynmanKac}, it follows from the Feynman-Kac's formula that
\begin{align*}
 & \nabla u(t,x) = \dbE\left[ \int_0^t e^{-\gamma s }\nabla \frac{\d F}{\d p}(m_{t-s}, X_s ) ds + e^{-\gamma t}\nabla u_0(X_t) \right], \\
 & \nabla \tilde u(t,x) = \dbE\left[ \int_0^t e^{-\gamma s } \nabla \frac{\d F}{\d p}(\tilde m_{t-s}, \tilde X_s ) ds + e^{-\gamma t } \nabla \tilde u_0(\tilde X_t) \right], 
\end{align*}
with 
\begin{align*}
 &dX_s = -\frac{\si^2}{2}\nabla u(t-s, X_s) ds + \si dW_s,\quad X_0 = x,\\
 &d\tilde X_s = -\frac{\si^2}{2}\nabla \tilde u(t - s, \tilde X_s) ds + \si dW_s,\quad \tilde X_0=x.
\end{align*}
By Proposition~\ref{prop:hjb-main} and \Cref{rem:decom-drift}, we may apply the reflection coupling in \Cref{thm:reflectioncoupling} in appendix to compare the marginal distribution of $X$ and $\tilde X$, denoted by $p$ and $\tilde p$ respectively. We obtain
\[ \cW_1(p_s, \tilde p_s) \le C e^{- c \si^2 s} \int_0^s e^{c \si^2 r}\dbE\big[| \nabla \d u (t-r,X_r)|\big] dr . \]
Further, by Kantorovich duality and Lipschitz continuity of $\nabla \frac{\d F}{\d p}$ and $\nabla \tilde u_0,$ we have
\begin{multline*}
 |\nabla \d u(t,x)|
 \le C \dbE\Big[ \int_0^t \int_0^s C e^{-\gamma s -c\si^2(s-r) }\big| \nabla \d u(t-r, X_r) \big| dr ds +\int_0^t e^{-\gamma s} \cW_1(m_{t-s}, \tilde m_{t-s})ds \\
  + \int_0^t C e^{-\gamma t -c\si^2(t-s)}|\nabla\d u(t-s, X_s)|ds + e^{-\gamma t }|\nabla\d u_0(X_t)|\Big],
\end{multline*}
which implies that 
\begin{equation}\label{eq:nabla u estimate}
 |\nabla \d u(t,x)|
 \le C \dbE\Big[ \int_0^t \big| \nabla \d u(t-s, X_s) \big| ds +\int_0^t \cW_1(m_{s}, \tilde m_{s})ds +|\nabla\d u_0(X_t)|\Big].
\end{equation}
Recall the decomposition of the solution established in Proposition~\ref{prop:hjb-main}:
\(u = v + w, \, \tilde u = \tilde v+ \tilde w,\)
where $v ,\tilde v$ are strictly convex and $w, \tilde w$ are Lipschitz.
We divide the following discussion into two cases.

\noindent{\rm (i).}\quad We assume $\nabla \d u_0 = 0$. Note that in this case $\nabla v = \nabla \tilde v$ (because $v,\tilde v$ are not influenced by $m$ or $\tilde m$) and that $\nabla \d u = \nabla w -\nabla \tilde w$ is bounded. It follows from the \cref{eq:nabla u estimate} that
\[ \|\nabla \d u(t, \cdot)\|_\infty \le C \left(\int_0^t \|\nabla \d u(s, \cdot )\|_\infty ds + \int_0^t \cW_1(m_s, \tilde m_s)ds\right). \]
Finally, by the Gr\"onwall inequality, we obtain
\[ \|\nabla \d u(t, \cdot)\|_\infty \le C_t \int_0^t \cW_1(m_s, \tilde m_s)ds . \]

\noindent{\rm (ii).}\quad We consider the general case. Recall that both $\nabla v$ and $\nabla \tilde v$ are Lipschitz, and both $\nabla w$ and $\nabla \tilde w$ are bounded, so we have $\|\nabla \d u (t,\cdot)\|_{(2)}  <\infty $. Further it follows from \cref{eq:nabla u estimate} that
\begin{align*}
 |\nabla \d u(t,x)|
 & \le
 C \Big( \int_0^t \|\nabla \d u (t-s,\cdot)\|_{(2)} (1+\dbE [|X_s|^2]) ds \notag\\
 & \quad\quad\quad\quad +\int_0^t \cW_1(m_s, \tilde m_s)ds + \|\nabla\d u_0\|_{(2)} (1+\dbE[|X_t|^2]) \Big) \notag\\
 & \le C e^{ct} \Big( \int_0^t \|\nabla \d u (t-s,\cdot)\|_{(2)} (1+|x|^2) ds \notag\\
 & \quad\quad\quad\quad +\int_0^t \cW_1(m_s, \tilde m_s)ds + \|\nabla\d u_0\|_{(2)} (1+|x|^2) \Big).
\end{align*}
Finally, by the Gr\"onwall inequality, we obtain
\[\|\nabla \d u (t,\cdot)\|_{(2)} \le C_t \left( \int_0^t \cW_1(m_s, \tilde m_s)ds + \|\nabla\d u_0\|_{(2)} \right). \]
\end{proof}

\subsection{Properties of Mean Field   Schr\"odinger Dynamics}

The decomposition of the generalized MFS dynamics provided by \Cref{thm:wellposedness} allows us to derive Gaussian bounds, first locally in time as stated below and later uniformly in time, see \Cref{lem:property-p-uniform}. 

\begin{prop}\label{lem:property-p}
For any $T>0,$ there exist $\ul c, \ol c, \ul C, \ol C>0,$ such that for all $t\in [0,T],$ $x\in\dbR^d,$
\begin{equation*}
  \ul C e^{- \ul c |x|^2} \le  p_t(x) \le \ol C e^{- \ol c |x|^2}. 
\end{equation*}
In particular, $p_t \in \cP_H$ for all $t\geq 0.$
\end{prop}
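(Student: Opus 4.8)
The plan is to read off the Gaussian bounds from the Cole--Hopf representation $p_t = e^{-u_t}/Z_t$, $Z_t = \int_{\dbR^d} e^{-u_t(x)}\,dx$, introduced in \Cref{sec:mfschrodinger}, by combining two facts already at our disposal: the growth estimate of \Cref{cor:uniform in time gradient} and the decomposition $u_t = v_t + w_t$ of \Cref{prop:hjb-main}, in which $v_t$ is strongly convex with $\nabla^2 v_t \ge c_* I_d$ for some $c_* > 0$ and $w_t$ is globally Lipschitz with constant $L_w$, both uniformly in $t$. Since the solution $p$ is the fixed point of the contraction~\eqref{eq:contraction}, the curve $t\mapsto p_t$ is itself $\cW_1$--continuous, i.e.\ satisfies \Cref{assum:mLipschitz}; hence \Cref{cor:uniform in time gradient} applies along $m = p$ and gives, for every $T > 0$ and all $t \le T$, $x \in \dbR^d$, the bounds $|u_t(x)| \le C_T(1+|x|^2)$ and $|\nabla u_t(x)| \le C_T(1+|x|)$.

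The core of the argument is to upgrade this into a \emph{two--sided} quadratic control of $u_t$. The upper bound $u_t(x) \le C_T(1+|x|^2)$ is immediate. For a matching lower bound I would write $u_t = v_t + w_t$ and use that strong convexity of $v_t$ yields $v_t(x) \ge v_t(0) + \nabla v_t(0)\cdot x + \tfrac{c_*}{2}|x|^2$, while $w_t(x) \ge w_t(0) - L_w|x|$; adding these and using $u_t(0) = v_t(0) + w_t(0)$ together with $|u_t(0)| \le C_T$ and $|\nabla v_t(0)| \le |\nabla u_t(0)| + \|\nabla w_t\|_\infty \le C_T$ (all from \Cref{cor:uniform in time gradient} and \Cref{prop:hjb-main}), one gets $u_t(x) \ge \tfrac{c_*}{2}|x|^2 - C_T(1+|x|) \ge \tfrac{c_*}{4}|x|^2 - C_T'$, uniformly in $t \le T$. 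Notice that only $u_t(0)$, not $w_t(0)$ separately, is needed here.

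Once $\tfrac{c_*}{4}|x|^2 - C_T' \le u_t(x) \le C_T(1+|x|^2)$ is in hand, the statement follows by elementary integration. The normalising constant obeys $\int_{\dbR^d} e^{-C_T(1+|x|^2)}dx \le Z_t \le e^{C_T'}\int_{\dbR^d} e^{-\frac{c_*}{4}|x|^2}dx$, so $Z_t$ is bounded away from $0$ and from $\infty$ by constants depending only on $T$; substituting into $p_t = e^{-u_t}/Z_t$ produces $\ul C e^{-\ul c|x|^2} \le p_t(x) \le \ol C e^{-\ol c|x|^2}$ for suitable $\ul c, \ol c, \ul C, \ol C > 0$ (with $\ol c = c_*/4$). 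Finally, the Gaussian upper bound gives $p_t \in \cP_2(\dbR^d)$, and since $\nabla\sqrt{p_t} = -\tfrac12\nabla u_t\,\sqrt{p_t}$ we obtain $\int_{\dbR^d} |\nabla\sqrt{p_t}|^2 dx = \tfrac14\int_{\dbR^d} |\nabla u_t|^2 p_t\,dx \le \tfrac{C_T^2}{4}\int_{\dbR^d} (1+|x|)^2\,\ol C e^{-\ol c|x|^2}dx < \infty$, so $\sqrt{p_t}\in H^1$ and $p_t\in\cP_H$; as $T$ is arbitrary this holds for all $t \ge 0$.

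This is mostly bookkeeping. The only place asking for a little care is justifying that \Cref{cor:uniform in time gradient} may be invoked along the self--consistent curve $m = p$ (i.e.\ the $\cW_1$--continuity of $t\mapsto p_t$, which is built into the fixed--point construction of \Cref{sec:proof-wellposedness}), and tracking how the uniform strong--convexity modulus $c_*$ of $v$ from \Cref{prop:hjb-main} enters the coefficient of the Gaussian \emph{upper} bound on $p_t$. There is no genuine analytic obstacle: the uniform strong convexity supplied by \Cref{prop:hjb-main} is precisely what turns the one--sided growth estimate of \Cref{cor:uniform in time gradient} into the required two--sided one.
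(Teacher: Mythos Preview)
Your proof is correct and follows essentially the same route as the paper. The paper's proof simply invokes the Gaussian--bounds \Cref{lem:property-p-app} from the Appendix (whose hypotheses are exactly the decomposition $u_t=v_t+w_t$ with $cI_d\le\nabla^2 v_t\le CI_d$, $\sup_t|\nabla v_t(0)|<\infty$, $\sup_t\|\nabla w_t\|_\infty<\infty$) and then checks $\sqrt{p_t}\in H^1$ via $|\nabla\sqrt{p_t}|^2=\tfrac14|\nabla u_t|^2 p_t$ and \Cref{cor:uniform in time gradient}; you have in effect reproduced the proof of that Appendix lemma inline, with the cosmetic difference that you Taylor--expand $v_t$ about the origin rather than about its minimiser $x_t$.
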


\begin{proof}
 The Gaussian bounds  follow immediately from \Cref{lem:property-p-app} in appendix, whose assumptions are satisfied on $\mathcal{T}=[0,T]$ according to \Cref{thm:wellposedness}.
 Then we observe
 \begin{equation*} 
  \left|\nabla \sqrt{p_t}\right|^2 = \frac{1}{4}  \left|\nabla \log p_t\right|^2 p_t \leq C_T(1+|x|^2) p_t,
 \end{equation*}
 where the latter follows from the boundedness of $\nabla^2 \log p_t.$ Thus $\nabla \sqrt{p_t}\in L^2$ and $p_t\in\cP_H.$
\end{proof}

Then we establish a stability result for the generalized MFS dynamics \eqref{eq:gradient-flow}.  It plays a crucial role in the proof of convergence in \Cref{thm:convergence}.

\begin{prop}\label{prop:stabailityinitialvalue}
For $n\in\dbN$, let $p^n$ (resp. $p$) be the generalized MFS dynamics \eqref{eq:gradient-flow} starting from $p^n_0$ (resp. $p_0$), where $p^n_0$ (resp. $p_0$) satisfy \Cref{assum:initialdistribution}. If $ \nabla \log p^n_0$ converges to $ \nabla \log p_0$ in $ \|\cdot\|_{(2)}$, then $(p^n_t, \log p^n_t)$ converges to $(p_t, \nabla \log p_t)$ in $\cW_1\otimes \|\cdot\|_{(2)}$ for all $t>0$.
\end{prop}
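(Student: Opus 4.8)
The plan is to close a Gronwall loop between two stability estimates that are, in essence, already available in the paper: the stability of the Hamilton--Jacobi--Bellman equation \eqref{eq:fixpoint_m-to-u} with respect to both the frozen flow and the initial condition (\Cref{prop:HJB stability}), and the stability of the ``invariant--measure map'' $\nabla u_t\mapsto p_t=e^{-u_t}/\int e^{-u_t}$ exploited in Step~(ii) of the proof of \Cref{thm:wellposedness}. Write $u^n$ (resp.\ $u$) for the solution of \eqref{eq:HJB} attached to $p^n$ (resp.\ $p$); since the mean-field Schr\"odinger dynamics is the self-consistent fixed point of the map \eqref{eq:contraction}, $u^n$ solves \eqref{eq:fixpoint_m-to-u} with $m=p^n$ and $u$ solves it with $m=p$. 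Put $\d u:=u^n-u$ and $\phi(t):=\|\nabla\d u(t,\cdot)\|_{(2)}$. By \Cref{prop:hjb-main} and \Cref{thm:wellposedness}, each of $u^n,u$ decomposes as a uniformly strongly convex part plus a uniformly Lipschitz part, so $\nabla\d u(t,\cdot)$ has at most linear growth, $\phi(t)<\infty$, the hypothesis \Cref{assum:reflectioncoupling} of the reflection coupling \Cref{thm:reflectioncoupling} holds (via \Cref{rem:decom-drift}), and, by the Gaussian bounds of \Cref{lem:property-p}, $t\mapsto p^n_t$ and $t\mapsto p_t$ satisfy \Cref{assum:mLipschitz}. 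I will need all these constants uniform in $n$, which requires the data $p^n_0$ to verify \Cref{assum:initialdistribution} with $n$--independent coefficients.

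First I would invoke \Cref{prop:HJB stability}(ii) with $m=p^n$ and $\tilde m=p$, which yields, for every $T>0$ and $t\le T$,
\[
\phi(t)\ \le\ C_T\Big(\int_0^t\cW_1(p^n_s,p_s)\,ds+\phi(0)\Big).
\]
Next, for fixed $t>0$, I would recall that $p^n_t$ (resp.\ $p_t$) is the invariant measure of $dX_s=-\nabla u^n(t,X_s)\,ds+\sqrt2\,dW_s$ (resp.\ with $\nabla u$), both diffusions being exponentially ergodic with an $n$--independent rate because their drifts are strongly convex at infinity with uniform constants. Running the two diffusions from the common law $p_0$, denoting their time--$s$ marginals by $p^n_{t,s},p_{t,s}$, and applying the reflection coupling of \Cref{thm:reflectioncoupling}, one gets, along the coupled process $X$,
\[
\cW_1\big(p^n_{t,s},p_{t,s}\big)\ \le\ C\int_0^s e^{-c(s-r)}\,\dbE\big[\,|\nabla u^n(t,X_r)-\nabla u(t,X_r)|\,\big]\,dr\ \le\ \frac{C}{c}\,\phi(t)\,\sup_{r\ge0}\dbE\big[1+|X_r|^2\big],
\]
using $|\nabla\d u(t,x)|\le\phi(t)(1+|x|^2)$; the last supremum is finite, uniformly in $n$, thanks to the uniform strong convexity of the drift at infinity together with the finite second moment of $p_0$. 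Letting $s\to\infty$ and using $p^n_{t,s}\to p^n_t$ and $p_{t,s}\to p_t$ in $\cW_1$ gives $\cW_1(p^n_t,p_t)\le A\,\phi(t)$ with $A$ independent of $t\in[0,T]$ and of $n$. Substituting this into the previous display and applying Gronwall's inequality yields $\phi(t)\le C'_T\,\phi(0)$ on $[0,T]$; since $\phi(0)=\|\nabla u^n_0-\nabla u_0\|_{(2)}\to0$ by hypothesis, both $\sup_{t\le T}\phi(t)\to0$ and $\sup_{t\le T}\cW_1(p^n_t,p_t)\le A\,C'_T\,\phi(0)\to0$. As $T>0$ is arbitrary, $(p^n_t,\nabla u^n(t,\cdot))\to(p_t,\nabla u(t,\cdot))$ in $\cW_1\otimes\|\cdot\|_{(2)}$ for every $t>0$.

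The step I expect to be the main obstacle is the invariant--measure estimate: unlike in the proof of \Cref{thm:wellposedness}, where both frozen flows started from the same distribution and hence $\nabla\d u$ was \emph{bounded}, here the convex parts of $u^n$ and $u$ differ, so $\nabla\d u(t,\cdot)$ is only controlled in the weighted norm $\|\cdot\|_{(2)}$; the reflection coupling must therefore be run with the perturbation integrated against the law of the coupled diffusion, and the estimate closed via a \emph{uniform--in--time} second--moment bound, which is exactly what the uniform strong convexity at infinity supplies. A secondary point that must be checked carefully is the uniformity in $n$ of every constant (the ergodicity rates, the convexity and Lipschitz constants in the decomposition of $u^n$), for which one genuinely needs the coefficients in \Cref{assum:initialdistribution} for the $p^n_0$ to be chosen independently of $n$.
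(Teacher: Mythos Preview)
Your proposal is correct and follows essentially the same approach as the paper's own proof: invoke \Cref{prop:HJB stability}(ii) for the HJB stability, use that $p^n_t,p_t$ are invariant measures of the corresponding Langevin diffusions, apply the reflection coupling \Cref{thm:reflectioncoupling} to control $\cW_1(p^n_t,p_t)$ by $\|\nabla\d u(t,\cdot)\|_{(2)}$ (via a second-moment bound on the coupled process), and close the Gronwall loop. The paper phrases the passage $s\to\infty$ slightly differently---writing the limiting bound as $C\int|\nabla\d u(t,x)|\,p_t(dx)$ and invoking \Cref{lem:property-p}---but this amounts to exactly the uniform-in-time moment bound you use; your remark that uniformity in $n$ requires the $p^n_0$ to satisfy \Cref{assum:initialdistribution} with $n$-independent constants is a valid caveat that the paper leaves implicit.
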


\begin{proof}
Recall that the function $u_t$ solution to \eqref{eq:HJB} differs from $-\log p_t$ only through an additive constant (depending on $t$), in particular $\nabla u_t = -\nabla \log p_t.$
Denote by $\d u := u^n - u$.
By the stability result of the HJB equation \eqref{eq:fixpoint_m-to-u} proved in \Cref{prop:HJB stability}, we have
\begin{equation}\label{eq:stability estimate in proof}
 \|\nabla \d u(T, \cdot)\|_{(2)} \le C_T \left( \int_0^T \cW_1(p^n_t, p_t)dt + \|\nabla\d u_0\|_{(2)} \right).
\end{equation}
As in the proof of \Cref{thm:wellposedness}, note that $p^n_t$ and $p_t$ are the invariant measures of the diffusions:
\begin{equation*}
 dX^n_s = -\nabla u^n(t, X^n_s )ds + \sqrt{2}dW_s,\qquad 
 dX_s = -\nabla u(t, X_s )ds + \sqrt{2}dW_s,
\end{equation*}
respectively. Denote the marginal distributions $p^n_{t,s} := \cL(X^n_s)$ and $p_{t,s} : = \cL(X_s)$, and assume that $p^n_{t,0} = p_{t,0}$. Using the reflection coupling, we deduce from \Cref{thm:reflectioncoupling} that
\[\cW_1(p^n_{t,s}, p_{t,s}) \le C e^{- cs} \int_0^s e^{c r}\dbE\big[|\nabla \d u(t, X_r)|\big] dr.\]
By letting $s\rightarrow\infty$ on both sides, it follows from using successively the $\cW_1$--convergence of $p^n_{t,s}$ and $p_{t,s}$ toward $p^n_{t}$ and $p_{t}$ by \Cref{rem:ergodicity}, the linear growth of $\nabla\d u(t,\cdot)$ and  \Cref{lem:property-p} that
\[\cW_1(p^n_t, p_t) \le C \int_{\dbR^d}|\nabla \d u(t, x)| p_t (x) dx
\le C_T \|\nabla \d u(t, \cdot)\|_{(2)}. \]
Together with \cref{eq:stability estimate in proof}, by the Gr\"onwall inequality, we obtain
\begin{gather*}
 \|\nabla \d u(T, \cdot)\|_{(2)} \le C_T e^{T C_T } \|\nabla \d u_0\|_{(2)},
 \intertext{as well as}
\cW_1(p^n_T, p_T) \le C_T e^{T C_T } \|\nabla \d u_0\|_{(2)}. 
\end{gather*}
\end{proof}

\section{Convergence towards the Minimizer}
\label{sec:mfconvergence}

\subsection{First Order Condition}

The aim of this section is to derive a first order condition to characterize the minimizer of the generalized free energy $\cF^{\sigma,\gamma}.$ 
Recall that $\cF^{\sigma,\gamma}(p) = F(p) + \si^2 I (p) + \gamma H(p)$ with parameters $\sigma>0,$ $\gamma\geq 0,$ and $I(p) = \int |\nabla \sqrt p|^2,$  $H(p)= \int p\log p.$

\begin{prop}
\label{thm:convexity}
The function $\fF^{\si, \gamma}$ is convex on $\cP_H.$ Additionally, if it admits a minimizer \(p^* \in \mathcal P_H\) such that \(1/p^* \in L_\text{loc}^\infty\), then it is unique.
\end{prop}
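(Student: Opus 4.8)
The statement splits into convexity of $\fF^{\si,\gamma}$ (unconditional) and uniqueness of the minimizer (under the extra hypothesis $1/p^*\in L^\infty_{\text{loc}}$), and I would treat them in that order. Write $\fF^{\si,\gamma}=F+\si^2 I+\gamma H$ and check convexity of each summand along a linear interpolation $p_\eta:=\eta q+(1-\eta)p$, $\eta\in[0,1]$. The term $F$ is convex by \Cref{assum:potential}; the term $H$ is convex because $s\mapsto s\log s$ is convex on $[0,\infty)$. For $I$ I would use that on $\cP_H$ one has $p\in W^{1,1}_{\text{loc}}$ with $\nabla p=2\sqrt p\,\nabla\sqrt p$, whence $I(p)=\tfrac14\int_{\{p>0\}}|\nabla p|^2/p\,dx$, and that the perspective function $(a,b)\mapsto|b|^2/a$ is jointly convex on $(0,\infty)\times\dbR^d$. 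Since $\nabla p_\eta=\eta\nabla q+(1-\eta)\nabla p$, applying this inequality pointwise a.e.\ (with $\nabla p=0$ a.e.\ on $\{p=0\}$, which follows from $\sqrt p\in H^1$) and integrating gives $I(p_\eta)\le\eta I(q)+(1-\eta)I(p)$, and in passing shows $p_\eta\in\cP_H$. Summing the three inequalities yields convexity.

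For uniqueness, let $p^*$ be the assumed minimizer with $1/p^*\in L^\infty_{\text{loc}}$ and let $q\in\cP_H$ be any minimizer; set $\bar p:=\tfrac12 p^*+\tfrac12 q\in\cP_H$. By convexity $\bar p$ is again a minimizer, so the three convexity inequalities above must be equalities at the pair $(p^*,q)$. If $\gamma>0$, equality in the entropy term already forces $p^*=q$ a.e.\ by strict convexity of $s\mapsto s\log s$. In general (and in particular for $\gamma=0$) I would use equality in the Fisher term together with the identity
\[
\tfrac12 I(p^*)+\tfrac12 I(q)-I(\bar p)=\tfrac18\int_{\{p^*>0,\,q>0\}}\frac{\left|q\,\nabla p^*-p^*\,\nabla q\right|^2}{p^*q\,(p^*+q)}\,dx,
\]
which is the quantitative form of the equality case of the perspective inequality (the zero sets contribute nothing, as $\nabla p^*=0$ a.e.\ on $\{p^*=0\}$ and $\nabla q=0$ a.e.\ on $\{q=0\}$). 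Hence this integral vanishes, so $\nabla\log p^*=\nabla\log q$ a.e.\ on $\{p^*>0,\,q>0\}$. Because $1/p^*\in L^\infty_{\text{loc}}$ we have $p^*>0$ a.e.\ (indeed locally bounded below), so the quotient $r:=q/p^*$ belongs to $W^{1,1}_{\text{loc}}$ with $\nabla r=\tfrac1{p^*}\nabla q+q\,\nabla\tfrac1{p^*}$: on $\{q>0\}$ the identity above makes this vanish, while on $\{q=0\}$ both $\nabla q$ and $q\,\nabla\tfrac1{p^*}$ vanish a.e.; thus $\nabla r=0$ a.e., $r$ is a.e.\ constant on $\dbR^d$, and the constant equals $1$ since $\int q=\int p^*=1$. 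Therefore $q=p^*$.

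The step requiring the most care is this last one: upgrading equality in the \emph{integrated} Fisher inequality to the pointwise identity a.e., and then converting $\nabla\log p^*=\nabla\log q$ into $q=p^*$. This needs (i) the equality-case analysis of the perspective function, (ii) careful bookkeeping on the zero sets of $p^*$ and $q$, and (iii) the weak differentiability of $q/p^*$ with a.e.-vanishing gradient — which is precisely where the hypothesis $1/p^*\in L^\infty_{\text{loc}}$ is indispensable, since it both bounds the denominator away from $0$ and gives $1/p^*\in W^{1,1}_{\text{loc}}$ (via $\nabla(1/p^*)=-2\,\nabla\sqrt{p^*}/(\sqrt{p^*})^3\in L^2_{\text{loc}}$). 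The convexity part and the derivation of the displayed identity are routine once the perspective-function viewpoint is adopted, and if one only wanted the case $\gamma>0$ the entropy term alone would suffice.
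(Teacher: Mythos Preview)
Your proof is correct and follows essentially the same strategy as the paper: convexity of $\fF^{\si,\gamma}$ is obtained term by term (with the Fisher information handled via the joint convexity of $(a,b)\mapsto|b|^2/a$, which the paper phrases as a Cauchy--Schwarz inequality applied to $\sqrt p,\sqrt q$), and uniqueness comes from the equality case forcing the ratio of the two minimizers to have vanishing weak gradient, using $1/p^*\in L^\infty_{\text{loc}}$ to make this ratio a well-defined distribution. The only cosmetic difference is that the paper works throughout with the square roots $\varphi=\sqrt{p^*},\ \psi=\sqrt q\in H^1$ and the quotient $\psi/\varphi$, which slightly streamlines the bookkeeping on zero sets that you carry out explicitly.
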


\begin{proof}
It follows  from the convexity of $F$ by \Cref{assum:potential},  the convexity of $H$ by convexity of $x\mapsto x\log(x)$ and \Cref{prop:convexity-fisher} below.
\end{proof}

\begin{lem}
\label{prop:convexity-fisher}
Let \(p, q\in\cP_H\) and \(\alpha, \beta > 0\). Then we have
\[
 I\left(\alpha p + \beta q\right) \leq \alpha I\left(p\right) + \beta I\left(q\right).
\]
If in addition \(1/p \in L^\infty_\text{loc}\), then the equality holds if and only if \(p=q\).
\end{lem}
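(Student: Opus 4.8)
The plan is to reduce the convexity of the Fisher information to the convexity of a jointly convex functional of a pair of functions. Writing $\sqrt{r}$ for the density $r=\alpha p+\beta q$, I recall the identity $I(r)=\int_{\dbR^d}|\nabla\sqrt r|^2\,dx$ and the equivalent form $4|\nabla\sqrt r|^2=|\nabla r|^2/r$ valid wherever $r>0$. So it suffices to establish
\[
 \int_{\dbR^d}\frac{|\nabla(\alpha p+\beta q)|^2}{\alpha p+\beta q}\,dx
 \;\le\;\alpha\int_{\dbR^d}\frac{|\nabla p|^2}{p}\,dx+\beta\int_{\dbR^d}\frac{|\nabla q|^2}{q}\,dx.
\]
The key is the pointwise inequality: for scalars $a,b>0$ and vectors $\xi,\zeta\in\dbR^d$,
\[
 \frac{|\alpha\xi+\beta\zeta|^2}{\alpha a+\beta b}\;\le\;\alpha\frac{|\xi|^2}{a}+\beta\frac{|\zeta|^2}{b},
\]
which is precisely the statement that the map $(a,\xi)\mapsto |\xi|^2/a$ is jointly convex on $(0,\infty)\times\dbR^d$ (the perspective function of $\xi\mapsto|\xi|^2$). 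One verifies this either by a direct computation — expand and reduce to $\alpha\beta(b\xi-a\zeta)\cdot(b\xi-a\zeta)/(ab(\alpha a+\beta b))\ge0$ after clearing denominators — or by citing convexity of the perspective transform. Applying this with $a=p(x)$, $b=q(x)$, $\xi=\nabla p(x)$, $\zeta=\nabla q(x)$ and integrating gives the inequality on the set $\{p>0\}\cap\{q>0\}$; on the complement the integrands vanish a.e.\ for $H^1$ densities (by the fact that $\nabla r=0$ a.e.\ on $\{r=0\}$), so no boundary terms appear.

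For the equality case, assume $1/p\in L^\infty_{\mathrm{loc}}$. Then $p>0$ everywhere (it is lower semicontinuous after modification, or one argues on the Lebesgue-positive set), so the pointwise inequality is an equality a.e.\ iff $b\nabla p=a\nabla q$ a.e., i.e.\ $q\nabla p=p\nabla q$ a.e., i.e.\ $\nabla(p/q)=0$ a.e.\ on $\{q>0\}$. Combined with both $p$ and $q$ being probability densities (same total mass $1$), this forces $p=q$ a.e.\ — the main technical point being to run this argument cleanly on the possibly disconnected set $\{q>0\}$ and to handle $\{q=0\}$, where $\nabla q=0$ a.e.\ and the equality in the pointwise bound forces $\nabla p=0$ a.e.\ as well, so $p$ is locally constant there; a short measure-theoretic argument (or a Sobolev connectedness/ capacity remark, using $1/p\in L^\infty_{\mathrm{loc}}$ to rule out $p$ jumping to zero) closes the gap.

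I expect the routine part — the pointwise convexity of the perspective function and its integration — to be entirely mechanical. The genuine obstacle is the rigorous treatment of the equality case: pinning down that $q\nabla p=p\nabla q$ a.e.\ together with equal masses implies $p\equiv q$ requires care about the null set $\{q=0\}$ and about connectedness, and this is exactly where the hypothesis $1/p\in L^\infty_{\mathrm{loc}}$ is used. A clean way is to set $\phi:=p/q$ on $\{q>0\}$, show $\nabla\phi=0$ a.e., and use that $\{q>0\}$ has full measure (since $q\in\cP_H$ forces $\{q=0\}$ to have zero capacity under mild conditions) or, alternatively, argue directly that $p\mathbbm 1_{\{q=0\}}$ must vanish because $1/p$ locally bounded plus $\nabla p=0$ a.e.\ on that set contradicts $\int p=1$ unless the set is null.
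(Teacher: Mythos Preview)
Your inequality argument is essentially the paper's: the paper writes $\varphi=\sqrt p$, $\psi=\sqrt q$ and applies Cauchy--Schwarz to $(\sqrt\alpha\,\varphi)(\sqrt\alpha\,\nabla\varphi)+(\sqrt\beta\,\psi)(\sqrt\beta\,\nabla\psi)$, which is the same pointwise inequality as your perspective-function bound, just in the square-root variables. Both yield the same equality condition $p\nabla q=q\nabla p$ (equivalently $\varphi\nabla\psi=\psi\nabla\varphi$).

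Where your proposal falters is the equality case. You form the ratio $p/q$ and then have to wrestle with the set $\{q=0\}$, connectedness, capacity, and so on; the sketches you offer (``$\{q=0\}$ has zero capacity under mild conditions'', ``$\nabla p=0$ a.e.\ there contradicts $\int p=1$'') are not clearly true and would need real work. The fix is simply to take the ratio the other way. Since $1/p\in L^\infty_{\mathrm{loc}}$, the function $q/p$ (or $\psi/\varphi=\sqrt{q/p}$, as the paper uses) lies in $L^1_{\mathrm{loc}}$ and is a well-defined distribution on all of $\mathbb R^d$; its distributional gradient is $(p\nabla q-q\nabla p)/p^2=0$, so it is a constant on the connected domain $\mathbb R^d$, and equal masses force $p=q$. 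No discussion of $\{q=0\}$ is needed. This is exactly how the paper closes the argument in two lines.
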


\begin{proof}
Let \(\varphi = \sqrt p, \psi = \sqrt q\). We have by using the Cauchy--Schwarz inequality
\begin{align*}
 I \left(\alpha p + \beta q \right) & = \int \left| \nabla \sqrt{\alpha \varphi^2 + \beta \psi^2}\right|^2 = \int \frac{\left( \alpha \varphi \nabla \varphi + \beta \psi \nabla \psi \right)^2}{\alpha \varphi^2 + \beta \psi^2} \\
& \leq \int \frac{\left( \alpha \varphi^2 + \beta \psi^2\right) \left(\alpha \left(\nabla \varphi \right)^2 + \beta \left(\nabla \psi \right)^2\right)}{\alpha \varphi^2 + \beta \psi^2} = \alpha I\left(p\right) + \beta I\left(q\right).
\end{align*}
The equality holds if and only if \( \varphi \nabla \psi = \psi \nabla \varphi\). 
If in addition \(\frac{1}{p} \in L^\infty_\text{loc}\) then \(\frac{1}{\varphi} \in L^\infty_\text{loc}\) and \(\frac{\psi}{\varphi} \in L^1_\text{loc}\) which is a distribution in sense of Schwartz. Its derivative satisfies
\[
\nabla \left(\frac{\psi}{\varphi}\right) = \frac{\varphi \nabla \psi - \psi \nabla \varphi}{\varphi^2} = 0.
\]
Therefore \(\frac{\psi}{\varphi}\) is constant a.e., \ie, $p$ and $q$ are proportional.
\end{proof}

\begin{prop}
\label{thm:foc}
If a probability measure \(p \in \mathcal P_H\) satisfies $p\in C^2$ and
\begin{equation*}
 p(x) \le C e^{- c |x|^2}, \qquad \left|\nabla^2 \log p(x)\right| \le C, \label{eq:hyp-foc-entropy}
\end{equation*}
then the following inequality holds: for all $q\in \cP_H,$
\begin{equation*}
 \fF^{\sigma, \gamma}(q) - \fF^{\sigma, \gamma}(p) \ge \int_{\dbR^d} \frac{\delta \fF^{\sigma, \gamma}}{\delta p}(p,x) \left(q(x)-p(x)\right)\,dx.
\end{equation*}
In particular, if $\frac{\delta \fF^{\sigma, \gamma}}{\delta p}(p,\cdot)=0,$ then $p$ is the unique minimizer of the generalized free energy~$\fF^{\si, \gamma}.$
\end{prop}

\begin{proof}
We have $\fF^{\sigma, \gamma}(p)= F(p) + \sigma^2 I (p) + \gamma H(p).$ We deal with each of these three terms separately.  Adding the three subsequent inequalities gives the desired inequality. The second assertion then follows immediately from \Cref{thm:convexity}. Throughout the proof,  we denote \(p_t := p + t (q-p)\) for \(t \in \left[0,1\right]\).

\noindent{\rm (i).}\quad By convexity of $F,$ it holds
\begin{equation*}
 F(q) - F(p) \ge \frac{F(p_t) - F(p)}{t}.
\end{equation*}
Since $F$ is $\cC^1,$ we conclude by passing to the limit $t\to 0$ that
\begin{equation*}
 F(q) - F(p) \ge  \left.\frac{ d F \left(p_t\right)}{dt} \right|_{t=0^+} = \int_{\dbR^d} \frac{\delta F}{\delta p}(p,\cdot) \left(q-p\right).
\end{equation*}

\noindent{\rm (ii).}\quad Denote \(I_K \left(p\right) := \frac 14 \int_K \frac{\left|\nabla p\right|^2}{p}\) for \(K \subset \mathbb R^d\) compact and \(p\in\cP_H\). 
Assume first that $q$ is bounded and compactly supported. 
Then it follows from the convexity of $I_K$ and differentiation under the integral sign that
\[
  I_K \left(q\right) - I_K\left(p\right) \geq \left.\frac{ d I_K \left(p_t\right)}{dt} \right|_{t=0^+} = - \frac 14\int_K \frac{\left| \nabla p \right|^2}{p^2} (q-p) + \frac 12 \int_K \frac{\nabla p \cdot \nabla (q-p)}{p}.
\]
Note that $\nabla q = 2 \sqrt{q} \nabla \sqrt{q}\in  L^2.$
Next we take the limit $K\uparrow \dbR^d$ and we observe that the r.h.s. converges by using for the first term, $\frac{\left| \nabla p(x) \right|}{p(x)} = \left| \nabla \log p(x) \right|\leq C(1+|x|)$ and $p,q\in\cP_2,$ and for the second term, \(\frac{\left|\nabla p\right|^2}{p} = 4\left|\nabla\sqrt{p}\right|^2 \in L^1\).
Using further integration by parts, since $p(x) \le C e^{- c |x|^2}$ and $q$ is compactly supported, we obtain 
\begin{equation*}
 I \left(q\right) - I\left(p\right) \geq  - \frac 14 \int_{\dbR^d} { \left( \frac{\left|\nabla p\right|^2}{{p}^2} + 2 \nabla \cdot \left(\frac{\nabla p}{p}\right) \right) (q-p)}.
\end{equation*}
To conclude it remains to deal with the general case $q\in\cP_H$ not necessarily bounded and compactly supported. Given $M>0,$ we consider the distribution $q_M\propto\mathbf{1}_{|x|\le M} q\wedge M$ and we apply the inequality above to $q_M.$ Taking the limit $M\to\infty$ yields the desired result as the r.h.s.  converges since $q\in\cP_ 2$ and $|\nabla^2 \log p | \le C.$

\noindent{\rm (iii).}\quad 
Denote \(H_K \left(p\right) := \int_K p \log p \) for \(K \subset \mathbb R^d\) compact and \(p\in\cP_H\). 
Assume first that $q$ is bounded. Then it follows from the convexity of $H_K$ and differentiation under the integral sign that  
\begin{equation*}
 H_K \left(q\right) - H_K\left(p\right) \geq  \left.\frac{ d H_K \left(p_t\right)}{dt} \right|_{t=0^+} = \int_K {\left(1 + \log p\right)\left(q-p\right)}.
\end{equation*}
Next we take the limit $K\uparrow \dbR^d$ and we observe that the r.h.s. converges as $p,q\in\cP_2$ and $\left| \log p(x) \right|\leq C(1+|x|^2)$.  We obtain 
\begin{equation*}
 H \left(q\right) - H\left(p\right) \geq  \int_{\dbR^d} { \log{p} \,(q-p)}
\end{equation*}
To conclude it remains to deal with the general case $q\in\cP_H$ not necessarily bounded. Given $M>0,$ we consider the distribution $q_M\propto q\wedge M$ and we apply the inequality above to $q_M \in L^{\infty}.$ Taking the limit $M\to\infty$ yields the desired result.
\end{proof}

\subsection{Dissipation of Energy}

\begin{prop}\label{thm:energydecrease}
The generalized free energy decreases along the generalized MFS dynamics $(p_t)_{t\ge 0}$ solution to~\cref{eq:gradient-flow}. More precisely, we have
\begin{equation}\label{eq:energy_decrease}
 \frac{d}{dt} \fF^{\si, \gamma} (p_t) = - \int_{\dbR^d} \left|\frac{\d \fF^{\si, \gamma}}{ \d p}(p_t, x)\right|^2 p_t(x) dx.
\end{equation}
\end{prop}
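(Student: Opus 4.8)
The plan is to differentiate $\fF^{\si,\gamma}(p_t)=F(p_t)+\si^2 I(p_t)+\gamma H(p_t)$ term by term, to recognise the sum of the three resulting integrands as $\tfrac{\d\fF^{\si,\gamma}}{\d p}(p_t,\cdot)$ up to the additive constant $\lambda_t$, and then to substitute the dynamics $\partial_t p_t=-\tfrac{\d\fF^{\si,\gamma}}{\d p}(p_t,\cdot)p_t$. Fix $0<t_0<T<\infty$ and work on $[t_0,T]\times\dbR^d\subset Q_T$, on which \Cref{thm:wellposedness}, \Cref{cor:uniform in time gradient} and \Cref{lem:property-p} supply exactly the control that makes this rigorous: the decomposition $p_t=e^{-(v_t+w_t)}$ with $\nabla^2 v_t$ pinched between fixed multiples of $I_d$ and $\|\nabla w_t\|_\infty\vee\|\nabla^2 w_t\|_\infty$ bounded, hence uniform Gaussian bounds $\ul C e^{-\ul c|x|^2}\le p_t(x)\le\ol C e^{-\ol c|x|^2}$, the pointwise estimates $|\nabla p_t|/p_t\le C(1+|x|)$ and $|\nabla\cdot(\nabla p_t/p_t)|+|\nabla p_t|^2/p_t^2\le C(1+|x|^2)$, and — since $p\in C^3(Q_T)$ — joint continuity together with polynomial-times-Gaussian domination of $\partial_t p_t$ and $\partial_t\nabla p_t$, the former read off from \cref{eq:gradient-flow}. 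All these constants are uniform in $t$ on $[t_0,T]$, which is what licenses the differentiations under the integral sign and the integration by parts below.

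For the potential term I would use the $\cC^1$ property: $F(p_{t+h})-F(p_t)=\int_0^1\!\int_{\dbR^d}\tfrac{\d F}{\d p}\big(\eta p_{t+h}+(1-\eta)p_t,x\big)(p_{t+h}-p_t)(dx)\,d\eta$, divide by $h$, and pass to the limit by dominated convergence using the continuity and quadratic growth of $\tfrac{\d F}{\d p}$, the uniform Gaussian bound on $p_s$ for $s$ near $t$, and $h^{-1}(p_{t+h}-p_t)\to\partial_t p_t$; this gives $\tfrac{d}{dt}F(p_t)=\int_{\dbR^d}\tfrac{\d F}{\d p}(p_t,x)\,\partial_t p_t(x)\,dx$. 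For the entropy, differentiating under the integral (with $|\log p_t|\le C(1+|x|^2)$ as the dominating weight) yields $\tfrac{d}{dt}H(p_t)=\int_{\dbR^d}(1+\log p_t)\,\partial_t p_t\,dx=\int_{\dbR^d}\log p_t\,\partial_t p_t\,dx$, the last equality by $\int_{\dbR^d}\partial_t p_t\,dx=0$ (conservation of mass, \cref{eq:normaldFsigmadp}). For the Fisher term, write $I(p_t)=\tfrac14\int_{\dbR^d}|\nabla p_t|^2/p_t$, differentiate under the integral to get $\tfrac14\int_{\dbR^d}\big(2\nabla p_t\cdot\nabla\partial_t p_t/p_t-|\nabla p_t|^2\partial_t p_t/p_t^2\big)\,dx$, then integrate by parts in $x$ in the first summand — the boundary term vanishing by the Gaussian decay of $(\nabla p_t/p_t)\,\partial_t p_t$ — to obtain $\tfrac{d}{dt}\si^2 I(p_t)=-\tfrac{\si^2}{4}\int_{\dbR^d}\big(2\nabla\cdot(\nabla p_t/p_t)+|\nabla p_t|^2/p_t^2\big)\,\partial_t p_t\,dx$.

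Summing the three identities and comparing the bracket with \cref{eq:dFsigmadp} gives $\tfrac{d}{dt}\fF^{\si,\gamma}(p_t)=\int_{\dbR^d}\big(\tfrac{\d\fF^{\si,\gamma}}{\d p}(p_t,x)+\lambda_t\big)\partial_t p_t(x)\,dx=\int_{\dbR^d}\tfrac{\d\fF^{\si,\gamma}}{\d p}(p_t,x)\,\partial_t p_t(x)\,dx$, the term $\lambda_t\int_{\dbR^d}\partial_t p_t\,dx$ vanishing again by conservation of mass; substituting $\partial_t p_t=-\tfrac{\d\fF^{\si,\gamma}}{\d p}(p_t,\cdot)p_t$ then yields \cref{eq:energy_decrease} for $t>0$, and the identity extends to $t=0$ as a right derivative since its right-hand side is continuous in $t$ and the Gaussian bounds give continuity of $t\mapsto\fF^{\si,\gamma}(p_t)$ up to the endpoint. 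I expect the only real difficulty to be the bookkeeping: packaging the a priori estimates of \Cref{thm:wellposedness} and \Cref{cor:uniform in time gradient} into locally-uniform-in-time bounds on $p_t,\nabla p_t,\nabla^2 p_t,\partial_t p_t,\partial_t\nabla p_t$ that are simultaneously strong enough to dominate every difference quotient and to kill the boundary terms in the Fisher-information integration by parts; once those are recorded, the computation itself is routine.
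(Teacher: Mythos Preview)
Your computation is correct in outline, but it is not the route the paper takes, and one of your ``bookkeeping'' assertions is not actually supplied by the results you cite.

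\medskip

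\textbf{What the paper does.} The paper does not differentiate $F$, $I$ and $H$ separately. Instead it invokes the first-order inequality of \Cref{thm:foc} twice: applied at $p_t$ with $q=p_{t+h}$ it gives
\[
\fF^{\si,\gamma}(p_{t+h})-\fF^{\si,\gamma}(p_t)\ \ge\ -\int_{\dbR^d}\frac{\d\fF^{\si,\gamma}}{\d p}(p_t,x)\int_t^{t+h}\frac{\d\fF^{\si,\gamma}}{\d p}(p_s,x)\,p_s(x)\,ds\,dx,
\]
and applied at $p_{t+h}$ with $q=p_t$ it gives the matching upper bound with $\frac{\d\fF^{\si,\gamma}}{\d p}(p_{t+h},\cdot)$ in front. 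Dividing by $h$ and letting $h\to0$, the dominated convergence theorem (justified by \Cref{lem:dominated} and \Cref{lem:property-p}) squeezes out \eqref{eq:energy_decrease}. All the integration-by-parts bookkeeping for the Fisher term has already been done once and for all in the proof of \Cref{thm:foc}; the energy-dissipation proof itself is then three lines. This is where the convexity of $\fF^{\si,\gamma}$ really pays off.

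\medskip

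\textbf{Where your argument is thinner than claimed.} Your direct differentiation of the Fisher term requires dominating $\partial_t\nabla p_t$ by a polynomial times a Gaussian, uniformly on $[t_0,T]$. You assert this, but it does not follow from the results you cite: writing $\partial_t\nabla p_t=-\big(\nabla\tfrac{\d\fF^{\si,\gamma}}{\d p}(p_t,\cdot)\big)p_t-\tfrac{\d\fF^{\si,\gamma}}{\d p}(p_t,\cdot)\nabla p_t$, the first factor contains $\nabla\Delta u_t$, i.e.\ third-order spatial derivatives of $u$, and neither \Cref{thm:wellposedness} nor \Cref{cor:uniform in time gradient} provides a polynomial bound on $\nabla^3 u$. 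The $C^3(Q_T)$ regularity gives existence and continuity, not the global growth control you need to dominate on all of $\dbR^d$. This gap can certainly be closed (e.g.\ by extending the Feynman--Kac arguments of Appendix~\ref{sec:regularity} one order higher), but it is extra work you have not accounted for. The paper's convexity-sandwich argument sidesteps precisely this issue: it never needs more than the quadratic bound on $\frac{\d\fF^{\si,\gamma}}{\d p}(p_t,\cdot)$ from \Cref{lem:dominated}.
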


\begin{proof}
Using \Cref{thm:foc} whose assumptions are satisfied in view of \Cref{thm:wellposedness} and \Cref{lem:property-p}, we have
\begin{align*}
 \fF^{\sigma, \gamma}(p_{t+h}) - \fF^{\sigma, \gamma}(p_t)
 &\ge \int_{\dbR^d} \frac{\d \fF^{\si, \gamma} }{\d p}(p_t, x) (p_{t+h}-p_t)(x) dx \\
 & = - \int_{\dbR^d} \frac{\d \fF^{\si, \gamma} }{\d p}(p_t, x) \int_t^{t+h}\frac{\d \fF^{\si, \gamma} }{\d p}(p_s, x) p_s(x) ds dx
\end{align*}
Similarily we have
\[ \fF^{\sigma, \gamma} (p_{t+h}) - \fF^{\sigma, \gamma}(p_t)
\le -\int_{\dbR^d} \frac{\d \fF^{\si, \gamma} }{\d p}(p_{t+h}, x) \int_t^{t+h}\frac{\d \fF^{\si,\gamma} }{\d p}(p_s, x) p_s(x) ds dx. \]
The conclusion then follows from the dominated convergence theorem.  Indeed,  by \Cref{thm:wellposedness},
the mapping $t\mapsto \frac{\d \fF^{\si,\gamma} }{\d p}(p_t, x)$ is continuous and satisfies $\sup_{t\le T} \left|\frac{\d \fF^{\si, \gamma}}{\d p}(p_t,x ) \right| \le C_T (1+ |x|^2)$ for any $T>0.$ 
Note that the same holds for $\frac{\d F}{\d p}(p_t,x )$ by the $\cW_1$--continuity of $t\mapsto p_t.$
In addition,  \Cref{lem:property-p} ensures that $ \int |x|^{4}\sup_{t \le T} p_t(x)dx <\infty.$
\end{proof}

The dissipation of  energy allows us to extend previous estimates of the generalized MFS dynamics from $[0,T]$ to $[0,\infty)$ which is crucial to study its asymptotic behavior.

\begin{lem}\label{lem:energy bound for p}
  It holds 
 \begin{equation}\label{eq:energy bound for p}
  \sup_{t> 0}\left\{\int_{\dbR^d} |x|^2 p_t(x) dx + \int_{\dbR^d} |\nabla\sqrt{p_t}(x)|^2 dx \right\}<+\infty.
 \end{equation}
\end{lem}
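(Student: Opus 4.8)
The plan is to combine the monotonicity of the free energy along the flow with the coercivity of $F$. By \Cref{thm:energydecrease} the right-hand side of \cref{eq:energy_decrease} is non-positive, so $t\mapsto\fF^{\si,\gamma}(p_t)$ is non-increasing and therefore $\fF^{\si,\gamma}(p_t)\le\fF^{\si,\gamma}(p_0)$ for every $t\ge0$. First I would record that $C_0:=\fF^{\si,\gamma}(p_0)<\infty$. Since $p_0=e^{-(v_0+w_0)}$ satisfies \Cref{assum:initialdistribution}, $\ul\eta$-convexity of $v_0$ and the $\ol\eta$-Lipschitz bounds on $\nabla v_0,\nabla w_0$ together with the Lipschitz continuity of $w_0$ squeeze $v_0+w_0$ between two quadratics; hence $p_0$ enjoys two-sided Gaussian bounds and $|\nabla\log p_0|\le C(1+|x|)$. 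Consequently $\int|x|^2p_0<\infty$, $I(p_0)=\tfrac14\int|\nabla\log p_0|^2p_0<\infty$, $H(p_0)<\infty$, and $F(p_0)<\infty$ by the $\cC^1$ property of $F$ combined with the quadratic growth of $\frac{\delta F}{\delta p}$ and the finiteness of the second moment of $p_0$.

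Write $M_t:=\int_{\dbR^d}|x|^2p_t(x)\,dx$. By \Cref{assum:potential} one has $F(p_t)\ge\l M_t$, and $I(p_t)\ge0$ trivially, so the only term in $\fF^{\si,\gamma}(p_t)$ that can be negative is $\gamma H(p_t)$; bounding it from below is the crux of the matter. For this I would compare $p_t$ to a centered Gaussian: for $\beta>0$ set $g_\beta(x):=(\beta/2\pi)^{d/2}e^{-\beta|x|^2/2}$; then, for each fixed $t$ — where all quantities below are finite by \Cref{lem:property-p} — nonnegativity of the relative entropy gives
\[
 H(p_t)=H(p_t\,|\,g_\beta)+\int_{\dbR^d}p_t\log g_\beta\ \ge\ \frac{d}{2}\log\frac{\beta}{2\pi}-\frac{\beta}{2}M_t.
\]

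Plugging $F(p_t)\ge\l M_t$, $I(p_t)\ge 0$ and this entropy lower bound into $\fF^{\si,\gamma}(p_t)\le C_0$, and using $\gamma\ge0$, yields
\[
 \Big(\l-\frac{\gamma\beta}{2}\Big)M_t+\si^2 I(p_t)\ \le\ C_0-\frac{\gamma d}{2}\log\frac{\beta}{2\pi}.
\]
I would then fix $\beta$ small enough that $\l-\gamma\beta/2\ge\l/2$ (any $\beta\le\l/\gamma$ when $\gamma>0$; when $\gamma=0$ the entropy term is absent and no comparison is needed): the right-hand side is then a finite constant independent of $t$, so $\sup_{t>0}M_t<\infty$ and $\sup_{t>0}\int|\nabla\sqrt{p_t}|^2=\sup_{t>0}I(p_t)<\infty$, which is exactly \cref{eq:energy bound for p}. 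The main obstacle, as anticipated, is that $H(p_t)$ is a priori unbounded below; the Gaussian comparison trades this for a term linear in $M_t$, and because $\l>0$ one keeps the coefficient of $M_t$ strictly positive by choosing the comparison Gaussian sufficiently spread out.
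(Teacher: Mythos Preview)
Your argument is correct and follows essentially the same route as the paper: use the monotonicity of $\fF^{\si,\gamma}$ along the flow from \Cref{thm:energydecrease}, bound the entropy from below via comparison with a Gaussian of large variance, and absorb the resulting $-\tfrac{\gamma\beta}{2}M_t$ into the coercivity $F(p_t)\ge\l M_t$ from \Cref{assum:potential}. Your write-up is somewhat more detailed about why $\fF^{\si,\gamma}(p_0)<\infty$ and about the finiteness of the relevant quantities at each fixed $t$, but the core idea is identical.
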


\begin{proof}
Let  $q$ the Gaussian density with variance $\upsilon^2.$ 
We have
\begin{equation*}
 H(p) = H(p\,|\,q) + \int { p(x) \log q(x) \,dx} \geq -\frac{d}{2}\log(2\pi\upsilon^2) - \frac{1}{2\upsilon^2} \int_{\dbR^d} |x|^2 p(x) dx 
\end{equation*}
Then it follows from Assumption~\ref{assum:potential} by choosing $\upsilon$ sufficiently large that  there exist $C,c>0$ such that
\begin{equation}\label{eq:free-energy-below}
 \fF^{\sigma, \gamma}(p_t) \ge -C+c \int_{\dbR^d} |x|^2 p_t(x) dx + \sigma^2 \int_{\dbR^d} |\nabla\sqrt{p_t}(x)|^2 dx,\qquad\forall\,t\ge 0.
\end{equation}
Since the generalized free energy is decreasing according to \Cref{thm:energydecrease}, we deduce that 
 \begin{equation*}
  \sup_{t\ge 0}\left\{c \int_{\dbR^d} |x|^2 p_t(x) dx + \sigma^2 \int_{\dbR^d} |\nabla\sqrt{p_t}(x)|^2 dx \right\} 
  \le  C + \fF^{\sigma, \gamma}(p_0).
 \end{equation*}  
\end{proof}

\begin{prop}\label{lem:nabla u uniform bound}
 It holds for all $x\in\dbR^d,$ 
 \[\sup_{t\ge 0}|\nabla \log p_t (x)| \le C(1+|x|).\]
\end{prop}

\begin{proof}
In view of \Cref{thm:wellposedness}, the Hessian $\nabla^2 \log p_t$ is bounded by some constant, denoted~$L.$ 
In  particular, it holds 
\[|\nabla \log p_t (x)| \le L|x| + |\nabla \log p_t(0)|,\]
and also 
\[|\nabla \log p_t(0)|^2 \le (L|x| + |\nabla \log p_t (x)|)^2\le 2L^2|x|^2 + 2|\nabla \log p_t (x)|^2.\]
It follows that
\[4 \int_{\dbR^d} |\nabla \sqrt{p_t} (x)|^2 dx
= \int_{\dbR^d} |\nabla \log p_t (x)|^2 p_t(x) dx
\ge \frac12 |\nabla \log p_t(0)|^2 - L^2\int_{\dbR^d} |x|^2 p_t(x)dx.\]
We conclude by \Cref{lem:energy bound for p} that $\sup_{t\ge 0}|\nabla \log p_t(0)|<\infty.$
\end{proof}

 Using \Cref{lem:nabla u uniform bound}, it is straightforward to extend the Gaussian bounds of \Cref{lem:property-p} from $[0,T]$ to $\dbR_+.$

\begin{cor}\label{lem:property-p-uniform}
There exist $\ul c, \ol c, \ul C, \ol C >0,$ such that for all $t\geq 0,$ $x\in\dbR^d,$
\begin{equation*}
  \ul C e^{- \ul c |x|^2} \le  p_t(x) \le \ol C e^{- \ol c |x|^2}.
\end{equation*}
\end{cor}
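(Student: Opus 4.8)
The plan is to re-examine the proofs of \Cref{lem:property-p} and \Cref{lem:dominated} and to isolate the single place where the finite horizon $T$ enters: both arguments invoke the $T$-dependent linear bound $\sup_{t\le T}|\nabla u(t,x)|\le C_T(1+|x|)$ of \Cref{cor:uniform in time gradient}. Since \Cref{lem:nabla u uniform bound} provides the time-uniform replacement $\sup_{t\ge 0}|\nabla u(t,x)|\le C(1+|x|)$, and \Cref{lem:hessian-u-bounded} already gives $\sup_{t\ge 0}\|\nabla^2 u(t,\cdot)\|_\infty\le C$, feeding these into the earlier arguments yields the claimed estimates with constants independent of $t$. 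No new analytic input is needed; the task is essentially to track which constants depend on $T$.

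For the Gaussian bounds I would proceed exactly as in \Cref{lem:property-p}: by \Cref{prop:hjb-main} the decomposition $u_t=v_t+w_t$ holds with $\ul c\,I_d\le\nabla^2 v_t\le\ol c\,I_d$ and $\|\nabla w_t\|_\infty\le C$ uniformly in $t\ge 0$, and $p_t=e^{-u_t}/Z_t$ with $Z_t=\int e^{-u_t}$. The appendix lemma \Cref{lem:property-p-app} turns exactly these data into two-sided Gaussian bounds, provided one also controls the location of the centre of $p_t$, for which a bound on $|\nabla u(t,0)|$ suffices; on $\dbR_+$ such a bound is furnished by \Cref{lem:nabla u uniform bound}. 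Consequently $\ul c,\ol c,\ul C,\ol C$ can be chosen independently of $t$, and $Z_t$ stays bounded above and below uniformly in $t$. In particular, taking logarithms in the Gaussian bounds gives at once $|u_t(x)|\le C(1+|x|^2)$ for all $t\ge 0$.

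For the quadratic bound on $\tfrac{\d\fF^{\si,\gamma}}{\d p}(p_t,\cdot)$ I would use the identity from the proof of \Cref{lem:dominated},
\[
\frac{\d\fF^{\si,\gamma}}{\d p}(p_t,\cdot)=\frac{\d F}{\d p}(p_t,\cdot)+\frac{\si^2}{2}\Delta u_t-\frac{\si^2}{4}|\nabla u_t|^2+\gamma u_t,
\]
and bound the four terms with $t$-uniform constants: $|\Delta u_t|\le d\,\|\nabla^2 u_t\|_\infty\le C$ by \Cref{lem:hessian-u-bounded}; $|\nabla u_t|^2\le C(1+|x|^2)$ by \Cref{lem:nabla u uniform bound}; $|u_t|\le C(1+|x|^2)$ by the previous paragraph; and $\big|\tfrac{\d F}{\d p}(p_t,x)\big|=|g(x)+G(p_t,x)|\le C(1+|x|^2)+L_G|x|+|G(p_t,0)|$ using \Cref{assum:decomposition-F-derivative}. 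The last step needs $\sup_{t\ge0}|G(p_t,0)|<\infty$, which follows because $(p_t)_{t\ge0}$ has uniformly bounded second moments by \Cref{lem:energy bound for p}, so the closure of $\{p_t:t\ge0\}$ is $\cW_1$-compact, while $p\mapsto G(p,0)$ is $\cW_1$-continuous.

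I do not expect any genuine obstacle, since all the hard estimates already live in \Cref{prop:hjb-main}, \Cref{lem:hessian-u-bounded}, \Cref{lem:energy bound for p} and \Cref{lem:nabla u uniform bound}. The only point deserving care is the order of the steps: the quadratic bound on $u_t$ is deduced from the time-uniform Gaussian bounds, so the latter must be established first --- using solely the $u=v+w$ decomposition together with \Cref{lem:nabla u uniform bound}, and not \Cref{lem:dominated} --- which avoids any circularity.
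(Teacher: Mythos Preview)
Your proposal is correct and matches the paper's approach exactly: the paper's proof is the one-line remark that, using \Cref{lem:nabla u uniform bound}, the Gaussian bounds of \Cref{lem:property-p} and the quadratic bound of \Cref{lem:dominated} extend from $[0,T]$ to $\dbR_+$, and you have simply unpacked that remark in full detail. Your care about the order of the steps (Gaussian bounds first, then the bound on $|u_t|$, then the bound on $\frac{\d\fF^{\si,\gamma}}{\d p}$) and the use of $\cW_1$-compactness to control $|G(p_t,0)|$ are precisely the right way to make the extension rigorous.
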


\subsection{Proof of Theorem \ref{thm:convergence}}
\label{sec:convergence}

\begin{proof}[Proof of \Cref{thm:convergence}]
We start by observing that the family $(p_t)_{t\ge 0}$ is relatively compact for the uniform norm on $C(\dbR^d).$ This property follows from Arzel\`a--Ascoli Theorem as
\begin{equation}\label{eq:ascoli}
 p_{t}(x) \le C e^{-c|x|^2}, \qquad |\nabla p_{t}(x)| = |\nabla \log p_t(x)| p_{t}(x) \leq C(1+|x|) e^{-c|x|^2},
\end{equation}
by \Cref{lem:nabla u uniform bound} and \Cref{lem:property-p-uniform}.  Let $p^*$ be an arbitrary cluster point, \ie, $p_{t_k}$ converges uniformly to $p^*$ for some sequence $t_k\uparrow\infty.$ Note that, in view of the Gaussian bound above, the convergence also occurs in $\cW_p$ for any $p\ge 1.$   The aim of the proof is to show that $p^*$ is the unique minimizer of $\fF^{\si, \gamma}.$

\noindent{\rm (i).}\quad Let us show first that, for almost all $h>0,$ 
\begin{equation}\label{eq:lasalle}
\liminf_{k\rightarrow \infty} \int_{\dbR^d} \left|\frac{\d \fF^{\sigma, \gamma}}{\d p}(p_{t_k+h}, x)\right|^2 p(t_k+h,x)dx = 0. 
\end{equation}
Indeed, suppose by contradiction that there exists $h>0$ such that
\begin{align*}
0 &< \int_0^h\liminf_{k\rightarrow\infty} \left\{ \int_{\dbR^d} \left|\frac{\d \fF^{\sigma, \gamma}}{\d p}(p_{t_k+s}, x)\right|^2 p_{t_k+s}(x)dx \right\}ds \\
&\le \liminf_{k\rightarrow\infty} \int_0^h \left\{ \int_{\dbR^d} \left|\frac{\d \fF^{\sigma, \gamma}}{\d p}(p_{t_k+s}, x)\right|^2 p_{t_k+s}(x)dx \right\}ds,
\end{align*}
where the last inequality is due to Fatou's lemma. 
It would lead to a contradiction as by \Cref{thm:energydecrease},
\begin{align*} 
 \fF^{\sigma, \gamma}(p_{t_{k+1}}) - \fF^{\sigma, \gamma}(p_{t_0}) & = \sum_{j=0}^k {\fF^{\sigma, \gamma}(p_{t_{j+1}}) - \fF^{\sigma, \gamma}(p_{t_j})} \\
 & = - \sum_{j=0}^k {\int_0^{t_{j+1}-t_j} \int_{\dbR^d} \left|\frac{\d \fF^{\sigma, \gamma}}{\d p}(p_{t_j+s}, x)\right|^2 p_{t_j+s}(x)dx ds}
\end{align*} 
where the l.h.s. is bounded from below by \cref{eq:free-energy-below} and the r.h.s. diverges to $-\infty$ by assuming w.l.o.g. that $t_{j+1}-t_j \ge h.$

\noindent{\rm (ii).}\quad From now on, denote by $t_k^h:=t_k+h$ where $h>0$ is chosen so that \cref{eq:lasalle} holds. Let $q$ be an arbitrary probability measure in $\cP_H.$  Due to the first order inequality established in \Cref{thm:foc}, we have
\[\fF^{\sigma, \gamma} (q)- \fF^{\sigma, \gamma}(p_{t_k^h})
~\ge~ \int_{\dbR^d} \frac{\d \fF^{\si, \gamma}}{\d p}(p_{t_k^h} ,x) (q - p_{t_k^h})(x)dx.\]
In view of \Cref{thm:wellposedness},  \Cref{lem:nabla u uniform bound} and \Cref{lem:property-p-uniform}, we have 
\begin{equation*}
\sup_{t\ge 0} \left|\frac{\d \fF^{\si, \gamma}}{\d p}(p_t,x ) \right| \le C (1+ |x|^2), \qquad  \sup_{t\ge 0} \int_{\dbR^d} |x|^{2}p_t(x)dx <\infty.
\end{equation*}
Note that the first inequality holds for $\frac{\d F}{\d p}(p_t,x )$ since $(p_t)_{t\ge 0}$ belongs to a $\cW^1$--compact set  due to the Gaussian bound.
Hence, for any $\e>0,$ we can find $K$ big enough such that for all $k,j\in\dbN,$
\begin{equation*}
 \fF^{\sigma, \gamma} (p_{t_k^h}) \le \fF^{\sigma, \gamma} (q) -\int_{|x|\le K} \frac{\d \fF^{\si, \gamma}}{\d p}(p_{t_k^h} ,x) (q - p_{t_k^h})(x)dx + \e.
\end{equation*}
Further it follows from Cauchy--Schwartz inequality that
\[\left|\int_{|x|\le K} \frac{\d \fF^{\si, \gamma}}{\d p}(p_{t_k^h} ,x) (q - p_{t_k^h})(x)dx \right|
\le \left( \int_{\dbR^d} \left|\frac{\d \fF^{\sigma, \gamma}}{\d p}(p_{t_k^h}, x)\right|^2 p_{t_k^h}(x)dx \int_{|x|\le K}\frac{|q-p_{t_k^h}|^2}{p_{t_k^h}}(x) dx\right)^\frac12.\]
Assume first that $q$ is bounded and note that the second term on the r.h.s.  is also bounded as $\inf_{k,h,x} p_{t_k^h}(x)>0$ by \Cref{lem:property-p-uniform}. Thus we deduce by taking the limit $k\to \infty$ and then $\varepsilon\to 0$ that 
\begin{equation}\label{eq:minimum}
\liminf_{k\rightarrow\infty} \fF^{\si, \gamma} (p_{t_k^h})\le \fF^{\si, \gamma} (q),
\end{equation}
for any $q\in\cP_H$ bounded. If $q\in\cP_H$ is not necessarily bounded, this inequality also holds as it holds for the distribution $q_M\propto q\wedge M$ and $\fF^{\si, \gamma} (q_M)\to \fF^{\si, \gamma} (q)$ as $M\to\infty.$

\noindent{\rm (iii).}\quad Denote by $(p^*_t)_{t\ge 0}$ the solution to  \cref{eq:gradient-flow} starting from $p^*_0=p^*. $  
We observe by \Cref{lem:Lassalle} below that  $p_{t_k^h}$ and $\nabla \log p_{t_k^h}$ converges pointwise  to $p^*_{h}$ and $\nabla\log p^*_{h}$ respectively.
In view of \Cref{lem:nabla u uniform bound} and \Cref{lem:property-p-uniform}, it follows easily by the dominated convergence theorem that $\lim_{k\rightarrow\infty} F(p_{t_k^h})=F(p^*_h)$ as $p_{t_k^h}\to p^*_{h}$ in $\cW_2$ by using the Gaussian bound,
\begin{gather*}
\lim_{k\rightarrow\infty} H(p_{t_k^h}) = \lim_{k\rightarrow\infty} \int {p_{t_k^h} \log p_{t_k^h}} = \int {p^*_{h} \log p^*_{h}}=H(p^*_{h}),
 \intertext{and}
 \lim_{k\rightarrow\infty} I(p_{t_k^h}) = \lim_{k\rightarrow\infty} \frac 14 \int {|\nabla\log p_{t_k^h} |^2 p_{t_k^h}} = \frac 14 \int {|\nabla\log p^*_{h}|^2 p^*_{h}}=I(p^*_{h}).
\end{gather*}
We deduce that 
\begin{equation*}
 \lim_{k\rightarrow\infty} \fF^{\si, \gamma} (p_{t_k^h}) = \fF^{\si, \gamma} (p^*_{h}).
\end{equation*}
Hence,  by \cref{eq:minimum}, $p^*_{h}$ is a minimizer of $\fF^{\si, \gamma}.$ 
In view of \Cref{thm:convexity}, this minimizer is unique and thus $p^*_{h}$ does not depend on $h$ and coincides with its limit $p^*_0=p^*$ when $h\to 0.$

\noindent{\rm (iv).}\quad As a byproduct, we observe that $p^*$ is a stationary solution to \cref{eq:gradient-flow} and thus it satisfies \[\frac{\d \fF^{\si, \gamma}}{ \d p}(p^*, \cdot) = 0.\]

\end{proof}

\begin{lem}\label{lem:Lassalle}
Using the notations above,   as $k\to \infty,$ $p_{t_k^h}$ converges uniformly to $p^*_h$ and $\nabla \log p_{t_k^h}$ converges to $\nabla \log p^*_{h}$ in $\|\cdot\|_{(2)}.$
\end{lem}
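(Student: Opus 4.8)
The plan is to exploit the fact that the mean--field Schr\"odinger equation \cref{eq:gradient-flow} is autonomous, so that $(p_{t_k+s})_{s\geq 0}$ is again a solution to \cref{eq:gradient-flow} and, by uniqueness (\Cref{thm:wellposedness}), coincides with the flow issued from the initial datum $p_{t_k}$. Writing $u(t,\cdot)=-\log p_t$ up to an additive constant, so that $\nabla u(t,\cdot)=-\nabla\log p_t$, the statement will then follow from the stability estimate \Cref{prop:stabailityinitialvalue} applied with $p^n_0:=p_{t_k}$ and $p_0:=p^*$, once two points are checked: (a) $p^*$ satisfies \Cref{assum:initialdistribution}, so that $(p^*_t)_{t\ge 0}$ and in particular $p^*_h$ are well defined; and (b) $\nabla u(t_k,\cdot)\to-\nabla\log p^*$ in $\|\cdot\|_{(2)}$. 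Granting (a)--(b), \Cref{prop:stabailityinitialvalue} gives $\cW_1(p_{t_k^h},p^*_h)\to 0$ together with $\nabla u(t_k^h,\cdot)\to\nabla u^*(h,\cdot)$ in $\|\cdot\|_{(2)}$; the latter is precisely the asserted convergence $\nabla\log p_{t_k^h}\to\nabla\log p^*_h$ in $\|\cdot\|_{(2)}$, and it remains only to upgrade the former from $\cW_1$ to the uniform norm.

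Both (a) and (b) will be obtained from the uniform--in--time a priori estimates already established. For (b): \Cref{lem:nabla u uniform bound} gives $|\nabla u(t,x)|\le C(1+|x|)$ and \Cref{lem:hessian-u-bounded} gives $\|\nabla^2 u(t,\cdot)\|_\infty\le C$, both uniformly in $t$, so $\{\nabla u(t_k,\cdot)\}_k$ is equi--Lipschitz and locally equibounded; by Arzel\`a--Ascoli every subsequence admits a further subsequence along which $\nabla u(t_{k_j},\cdot)$ converges locally uniformly to some continuous $g$. Since $p_{t_k}\to p^*$ uniformly and $p^*\ge\underline C e^{-\underline c|x|^2}>0$ (the Gaussian lower bound of \Cref{lem:property-p-uniform} passes to the limit), $u(t_{k_j},\cdot)\to-\log p^*$ locally uniformly up to a constant, and combining the two convergences shows $-\log p^*\in C^1$ with $g=-\nabla\log p^*$; as all subsequential limits coincide, the full sequence $\nabla u(t_k,\cdot)$ converges locally uniformly to $-\nabla\log p^*$. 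The limit inherits $|\nabla\log p^*(x)|\le C(1+|x|)$, so the tail $\frac{|\nabla u(t_k,x)+\nabla\log p^*(x)|}{1+|x|^2}\le\frac{2C(1+|x|)}{1+|x|^2}$ is small uniformly in $k$ for $|x|$ large, which promotes the local uniform convergence to convergence in $\|\cdot\|_{(2)}$. For (a), I would run the same compactness argument on the decomposition $p_t=e^{-(v_t+w_t)}$ of \Cref{thm:wellposedness}: normalising $v_{t_k},w_{t_k}$ at the origin and using $\underline c I_d\le\nabla^2 v_t\le\overline c I_d$ and $\|\nabla w_t\|_\infty\vee\|\nabla^2 w_t\|_\infty\le C$, one extracts locally uniform limits $v^*,w^*$ with $p^*=e^{-(v^*+w^*)}$, $v^*$ uniformly convex with Lipschitz gradient and $w^*$ bounded with Lipschitz gradient, i.e.\ $p^*$ satisfies \Cref{assum:initialdistribution}.

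It then remains to upgrade $\cW_1(p_{t_k^h},p^*_h)\to 0$ to uniform convergence. Here I note that the Gaussian bounds and $|\nabla p_{t_k^h}(x)|=|\nabla u(t_k^h,x)|\,p_{t_k^h}(x)\le C(1+|x|)e^{-\overline c|x|^2}$ (again \Cref{lem:nabla u uniform bound} and \Cref{lem:property-p-uniform}, valid for both flows since $(p_{t_k+s})_s$ and $(p^*_s)_s$ are solutions issued from data satisfying \Cref{assum:initialdistribution}) make $\{p_{t_k^h}\}_k$ equicontinuous and tight, hence relatively compact for the uniform norm on $C(\dbR^d)$; every uniform cluster point is also a $\cW_1$--cluster point and thus equals $p^*_h$, so $p_{t_k^h}\to p^*_h$ uniformly, completing the proof.

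I expect the main obstacle to be exactly the passage from the ``weak'' convergences delivered by the coupling/stability machinery ($\cW_1$ and local uniform convergence) to the strong topologies ($\|\cdot\|_\infty$ and $\|\cdot\|_{(2)}$) required in the statement, together with the verification that $p^*$ satisfies \Cref{assum:initialdistribution} so that $p^*_h$ is meaningful at all. Both rely crucially on having the a priori estimates of \Cref{sec:mfschrodinger} uniformly in $t$ rather than merely on bounded intervals $[0,T]$.
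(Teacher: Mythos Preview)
Your proposal is correct and follows essentially the same route as the paper: compactness of $(\nabla\log p_{t_k})_k$ from the uniform Hessian and linear-growth bounds to get $\nabla\log p_{t_k}\to\nabla\log p^*$ in $\|\cdot\|_{(2)}$, then the stability estimate \Cref{prop:stabailityinitialvalue} to transport this to time $h$, and finally an Arzel\`a--Ascoli argument to upgrade $\cW_1$ to uniform convergence. The only cosmetic differences are that the paper identifies the limit $\nabla\log p^*$ via the line-integral formula rather than your $C^1$-limit argument, and it outsources the last upgrade to a cited lemma of Boos (1985) instead of redoing Arzel\`a--Ascoli; your explicit verification that $p^*$ satisfies \Cref{assum:initialdistribution} via limits of the decomposition $e^{-(v_t+w_t)}$ is a point the paper leaves implicit.
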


\begin{proof}
\noindent{\rm (i).}\quad Let us show first that $\nabla \log p_{t_k}$ converges to $\nabla \log p^*$ in $\|\cdot\|_{(2)}$.
According to \Cref{thm:wellposedness} and \Cref{lem:nabla u uniform bound}, $(\nabla \log p_{t_k})_{k\in\dbN}$ lives in a $\|\cdot\|_{(2)}$-compact set of the form
\begin{equation*}
 \cK := \left\{f:\dbR^d \to \dbR;\ f \text{ is $C$--Lipschitz and }|f(0)|\le C\right\},
\end{equation*}
for some constant $C>0.$
Consequently, there is a subsequence and a function $f\in\cK$ such that $\lim_{k\rightarrow \infty }\|\nabla \log p_{t_k} - f\|_{(2)} =0 $. 
Therefore, we have for almost all $x,y\in\dbR^d,$
\begin{align*}
 \log p^* (x) - \log p^*(y)
 &= \lim_{k\rightarrow\infty} \Big(\log p_{t_k}(x) - \log p_{t_k}(y) \Big)\\
 &=  \lim_{k\rightarrow\infty} \int_0^1 \nabla \log p_{t_k} (s x +(1-s) y ) \cdot (x-y) ds\\
 &= \int_0^1 f ( s x +(1-s) y ) \cdot (x-y) ds.
\end{align*}
So $f = \nabla \log p^*$ and the desired result follows.

\noindent{\rm (ii).}\quad In view of \Cref{prop:stabailityinitialvalue}, it follows immediately from Step (i) that $(p_{t_k^h},\nabla \log p_{t_k^h})$ converges to $(p^*_{h},\nabla \log p^*_{h})$ in $\cW_1 \otimes \|\cdot\|_{(2)}.$ 
 It remains to prove that $p_{t_k^h}$ converges uniformly to $p^*_{h}.$ This is an easy consequence of Arzel\`a--Ascoli Theorem by \cref{eq:ascoli}.
\end{proof}

\subsection{Proof of Theorem \ref{thm:exp_convergence}}
\label{sec:exp-cvg}

The proof relies on the following functional inequality which is new to the best of our knowledge and may carry independent interest.

\begin{thm}\label{thm:functional_ineq}
Let $p(dx) = e^{-u(x)}dx$ satisfy a Poincar\'e inequality with constant $C_P$, i.e., for all \(f \in H^1(p)\) such that \(\int f dp = 0,\)
\begin{equation}
\label{eq:poincare}
\int f^2 dp \leq C_P \int |\nabla f|^2 dp.
\end{equation}
Assume that $u$ is weakly differentiable with $\nabla u\in L^2$ and define the operator
\(\cL := \Delta - \nabla u \cdot \nabla.  \)
Then we have for all $f\in W^{2,2}(p)$ such that $\cL f \in L^2(p),$
\begin{multline}\label{eq:functional_ineq}
    C_P^{-1} \left(\int_{\dbR^d} f(x) p(dx)\right)^2 \int_{\dbR^d} |\nabla f(x)|^2 p(dx)\\
    \le \int_{\dbR^d} f(x)^2  p(dx) \int_{\dbR^d} \big(\cL f(x)\big)^2  p(dx) - \left(\int_{\dbR^d} f(x) \cL f(x) p(dx) \right)^2.
\end{multline}
\end{thm}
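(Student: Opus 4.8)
The plan is to derive \eqref{eq:functional_ineq} from two applications of the Cauchy--Schwarz inequality in $L^2(p)$ together with a single use of the Poincaré inequality \eqref{eq:poincare}, after a variance decomposition and an integration by parts. The heuristic is that the right-hand side is a Gram determinant of $f$ and $\cL f$, which splits into a piece controlled by the mean $\int f\,dp$ plus a nonnegative Cauchy--Schwarz remainder, and the mean piece is exactly where Poincaré enters.

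Set $a:=\int f\,dp$, $E:=\int|\nabla f|^2\,dp$, $M:=\int(\cL f)^2\,dp$ and $V:=\int f^2\,dp-a^2=\mathrm{Var}_p(f)$. Since $\cL$ kills constants, $\cL f=\cL(f-a)$ and $\nabla f=\nabla(f-a)$. The first task is the integration-by-parts identities $\int\cL f\,dp=0$ and $\int f\,\cL f\,dp=-E$. Using $e^{-u}\cL\phi=\nabla\cdot(e^{-u}\nabla\phi)$ and a cut-off at infinity, both hold as soon as the resulting boundary terms vanish, which is exactly what the integrability hypotheses are designed to ensure: $f\in W^{2,2}(p)$ and $\cL f\in L^2(p)$ give $\cL(f^2)=2f\,\cL f+2|\nabla f|^2\in L^1(p)$, while $f^2\in W^{2,1}(p)$ gives $\nabla(f^2)\in L^1(p)$, so that $\int\cL(f^2)\,dp=0$, i.e. $\int f\,\cL f\,dp=-E$; the identity $\int\cL f\,dp=0$ follows likewise from $\cL f\in L^1(p)$ and $\nabla f\in L^1(p)$. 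With these, the right-hand side of \eqref{eq:functional_ineq} rewrites as $(a^2+V)M-E^2$.

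I would then split $(a^2+V)M-E^2=a^2M+(VM-E^2)$. Applying Cauchy--Schwarz to $f-a$ and $-\cL f$ and using $\int\cL f\,dp=0$ gives $E=\int(f-a)(-\cL f)\,dp\le V^{1/2}M^{1/2}$, hence $VM-E^2\ge 0$ and the right-hand side is $\ge a^2M$. Combining the same bound $E^2\le VM$ with the Poincaré inequality $V\le C_\text{P}E$ applied to the mean-zero function $f-a\in H^1(p)$ yields $E^2\le C_\text{P}\,EM$, that is $M\ge C_\text{P}^{-1}E$ (trivial if $E=0$). Multiplying by $a^2$ and chaining the two estimates, $(a^2+V)M-E^2\ge a^2M\ge C_\text{P}^{-1}a^2E$, which is precisely \eqref{eq:functional_ineq}.

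The only genuinely delicate point I anticipate is making the two integration-by-parts identities rigorous with no boundary contribution under the stated hypotheses; once they are in place the inequality is an elementary manipulation of Cauchy--Schwarz and Poincaré. As a consistency check, equality holds when $f-a$ is an eigenfunction of $-\cL$ for its bottom nonzero eigenvalue and $C_\text{P}$ is the sharp Poincaré constant, so the constant $C_\text{P}^{-1}$ cannot be improved.
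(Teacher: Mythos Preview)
Your proposal is correct and follows essentially the same route as the paper's proof: decompose $f$ into its mean $a=\bar f$ and its centered part, use Cauchy--Schwarz on the centered part to show the Gram remainder $VM-E^2\ge 0$, and then combine Cauchy--Schwarz with the Poincar\'e inequality to obtain $E\le C_{\mathrm P}M$, which yields $a^2M\ge C_{\mathrm P}^{-1}a^2E$. The paper treats the integration-by-parts identities $\int \cL f\,dp=0$ and $\int f\,\cL f\,dp=-\int|\nabla f|^2\,dp$ as given without further comment, so your remarks on how the hypotheses $f\in W^{2,2}(p)$, $f^2\in W^{2,1}(p)$, $\cL f\in L^2(p)$ justify them are a welcome addition.
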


\begin{rem}
Note that it follows from integration by parts that 
\begin{equation}\label{eq:ipp}
\int_{\dbR^d} \cL f(x) p(dx) = 0,\quad \int_{\dbR^d} |\nabla f(x)|^2 p(dx) = - \int_{\dbR^d} f(x) \cL f(x) p(dx).
\end{equation}
Moreover, if $p^f(dx) = f(x)^2 p(dx)$ is a probability measure then the right hand side of the inequality \eqref{eq:functional_ineq} is equal to the variance of $\frac{\cL f}{f}$ under $p^f$, namely, ${\rm Var}_{p^f}(\frac{\cL f}{f})$.
\end{rem}

\begin{proof}[Proof of \Cref{thm:functional_ineq}]
Let \(f = f_0 + \bar f\), where \(\bar f = \int f dp\) is the mean. For the right-hand side of the inequality \eqref{eq:functional_ineq},  we obtain by using successively $\int f_0 dp=0,$ $\int \cL f dp=0$ and Cauchy--Schwartz inequality, 
\begin{multline*}
\int_{\dbR^d} f^2  dp \int_{\dbR^d} \big(\cL f \big)^2  dp - \left(\int_{\dbR^d} f \cL f dp \right)^2\\
=  {\bar f}^2 \int (\mathcal Lf)^2 dp + \int f_0^2 dp \int (\mathcal Lf)^2 dp
- \left( \int f_0 \mathcal Lf dp \right)^2
~\geq~ \bar f^2 \int (\mathcal Lf)^2 dp.
\end{multline*}
Meanwhile for the left-hand side,  we obtain by \cref{eq:ipp}, Cauchy--Schwarz inequality and Poincar\'e inequality, 
\begin{multline*}
   \int |\nabla f|^2 dp
= -\int f \mathcal Lf dp
= -\int f_0 \mathcal Lf dp\\
\leq  \left( \int f_0^2 dp\right)^{1/2} \left(\int (\mathcal Lf)^2 dp\right)^{1/2}
\leq  C_\text{P}^{1/2} \left(\int |\nabla f|^2 dp \int (\mathcal Lf)^2 dp \right)^{1/2}.
\end{multline*}
The desired inequality follows by combining the estimates above.
\end{proof}

\begin{prop}
\label{prop:uniform-spectral-gap}
If \(u : \mathbb R^d \to \mathbb R\) decomposes as \(u = v + w\)
with \(v,w\in C^2\), \(\nabla^2 v \geq \eta I_d\) and \(|\nabla w| \leq L\),
then there exists a constant \(C_P = C(\eta, L, d)\)
such that the Poincar\'e inequality~\eqref{eq:poincare} holds.
\end{prop}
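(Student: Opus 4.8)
The plan is to derive the Poincaré inequality \eqref{eq:poincare} for $p=e^{-u}/\!\int e^{-u}$ from the Lyapunov--function criterion of Bakry--Barthe--Cattiaux--Guillin: if there exists a $C^2$ function $W\ge 1$, constants $\theta>0,\ b\ge 0$ and a radius $R>0$ such that
\[\cL W\le-\theta W+b\,\mathbf{1}_{B_R},\qquad \cL:=\Delta-\nabla u\cdot\nabla,\]
and if the normalized restriction $p|_{B_R}$ of $p$ to the ball $B_R=B(0,R)$ satisfies a Poincaré inequality with constant $\kappa_R$, then $p$ satisfies \eqref{eq:poincare} with $C_p\le(1+b\kappa_R)/\theta$. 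The whole point will be to produce $\theta,b,R,\kappa_R$ depending on $\eta,M,d$ only.

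First I would translate the coordinate so that $\nabla v(0)=0$, which affects neither the hypotheses nor the Poincaré constant. Then $\eta$--convexity of $v$ gives $\nabla v(x)\cdot x\ge\eta|x|^2$, hence $\nabla u(x)\cdot x=\nabla v(x)\cdot x+\nabla w(x)\cdot x\ge\eta|x|^2-M|x|$; integrating along rays yields $u(x)\ge u(0)+\tfrac\eta2|x|^2-M|x|$, so that $p$ is a genuine probability measure with Gaussian tails. I would then take $W(x):=e^{\beta|x|^2/2}$ with $\beta:=\eta/4$ (so $W\in L^2(p)$ by the tail bound just obtained). Since $\nabla W=\beta xW$ and $\Delta W=(\beta d+\beta^2|x|^2)W$, a direct computation gives
\[\cL W=\beta\bigl(d+\beta|x|^2-\nabla u(x)\cdot x\bigr)W\le\beta\bigl(d+(\beta-\eta)|x|^2+M|x|\bigr)W.\]
As $\beta-\eta=-3\eta/4<0$, the bracket is a downward parabola in $|x|$ tending to $-\infty$, so there is $R=R(\eta,M,d)$ with bracket $\le-1$ for $|x|\ge R$, i.e. $\cL W\le-\beta W$ outside $B_R$; and on $\overline{B_R}$ one has $\cL W+\beta W\le\beta\bigl(d+1+\tfrac{M^2}{4(\eta-\beta)}\bigr)e^{\beta R^2/2}=:b$. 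This is the drift condition, with $\theta:=\beta$, $b$ and $R$ functions of $\eta,M,d$ alone.

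The remaining, and only genuinely delicate, step is to control the local Poincaré constant $\kappa_R$ of $p|_{B_R}$: since no upper bound on $\nabla^2 v$ is assumed, the density of $p$ on $B_R$ need not be bounded below, so crude ``bounded density'' arguments fail. The fix is to keep the convex structure. The measure $\propto e^{-v}\mathbf{1}_{B_R}$ is the restriction of the $\eta$--strongly log-concave measure $\propto e^{-v}$ to a convex set, hence is again $\eta$--strongly log-concave, so it obeys a Poincaré inequality with constant $\le1/\eta$ by the Brascamp--Lieb inequality. Since $w$ is $M$--Lipschitz, its oscillation over $B_R$ is at most $2MR$, and the Holley--Stroock perturbation lemma upgrades this to $\kappa_R\le\eta^{-1}e^{2MR}$, still a function of $\eta,M,d$ only. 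Inserting $\theta,b,\kappa_R$ into the criterion gives $C_p=C(\eta,M,d)$, which is the claim.
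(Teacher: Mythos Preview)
Your proof is correct and is essentially a detailed, self-contained version of what the paper obtains in one line by invoking Corollary~1.6~(1) of Bakry--Barthe--Cattiaux--Guillin~\cite{BCG08}: the Lyapunov criterion $\cL W\le -\theta W + b\mathbf 1_{B_R}$ together with a local Poincar\'e inequality. Your added care in bounding the local constant $\kappa_R$ via Brascamp--Lieb for the strongly log-concave part plus Holley--Stroock for the Lipschitz perturbation is exactly what is needed to make the final constant depend only on $(\eta,M,d)$, since without an upper bound on $\nabla^2 v$ the oscillation of $u$ on $B_R$ is not a priori controlled.
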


\begin{proof}
This is a direct consequence of Corollary 1.6 (1) in \cite{bakry2008simple}.
\end{proof}

\begin{proof}[Proof of \Cref{thm:exp_convergence}]
Recall that \(p_t\) is the classical solution to the MFS dynamics~\eqref{eq:mfSchr\"odinger}.
For each \(t > 0,\) denote \(F_t := \frac{\delta F}{\delta p}(p_t,\cdot)\) and define
\begin{equation}\label{eq:def_hatp}
    \hat p_t := \argmin_{p\in\cP_H} \left\{\int F_t d p + \frac{\sigma^2}{4} I(p)\right\}.
\end{equation}
We recognize that it is the minimizer of the mean field optimization problem if we replace
\(F(p)\) by \(\int F_t dp\).
According to \Cref{thm:convergence}, the minimizer $\hat p_t = e^{-\hat{u}_t}$ satisfies
\(\hat u_t = \hat v_t + \hat w_t\)
with \(\nabla^2 \hat v_t \geq \ul\eta I_d\) and \(|\nabla \hat w_t| \leq L\) for all \(t > 0\).
Thus \(\hat p_t\) verifies a Poincar\'e inequality with a constant
\(C_P\) independent of time by \Cref{prop:uniform-spectral-gap}.  
Note also that
\begin{equation}\label{eq:hat_u_t}
 \frac{\sigma^2}{2}\Delta \hat u_t - \frac{\sigma^2}{4} |\nabla \hat u_t|^2 + F_t - \hat \lambda_t = 0,
\end{equation}
where, by integration by parts,
\begin{equation}\label{eq:ipp2}
  \hat \lambda_t = \int { \left(\frac{\sigma^2}{2}\Delta \hat u_t - \frac{\sigma^2}{4} |\nabla \hat u_t|^2 + F_t \right) d \hat p_t} = \int { \left(\frac{\sigma^2}{4} |\nabla \hat u_t|^2 + F_t \right) d \hat p_t}.
\end{equation}

The desired result follows by applying the functional inequality \eqref{eq:functional_ineq} with distribution $\hat p_t$ and function $f_t = \sqrt{p_t/\hat p_t}.$ Let \(\mathcal L_t = \Delta - \nabla \hat u_t \cdot \nabla\)  and  observe by direct computation using $f_t = \exp((\hat u_t - u_t)/2)$ that 
\begin{equation*}
\frac{\mathcal L_t f_t}{f_t}
= \frac 12 \Delta \hat u_t - \frac 14 |\nabla \hat u_t|^2
- \left(\frac 12 \Delta u_t - \frac 14 |\nabla  u_t|^2\right).
\end{equation*}
Then it follows from \cref{eq:hat_u_t} that 
\begin{equation}\label{eq:Lf}
\frac{\mathcal L_t f_t}{f_t}
= \sigma^{-2} \hat \lambda_t
- \sigma^{-2}\left(\frac {\sigma^2}2 \Delta u_t - \frac {\sigma^2}4 |\nabla  u_t|^2 + F_t\right).
\end{equation}
Thus, by using \Cref{thm:energydecrease}, the right-hand side of \cref{eq:functional_ineq} corresponds to
\begin{align*}
\frac{d\fF^\sigma(p_t)}{dt}
& = - \int \left| \frac{\sigma^2}{2} \Delta u_t - \frac{\sigma^2}{4} |\nabla u_t|^2 + F_t - \lambda_t\right|^2 d p_t \\
& = - \int \left| \frac{\sigma^2}{2} \Delta u_t - \frac{\sigma^2}{4} |\nabla u_t|^2 + F_t - \hat \lambda_t\right|^2 d p_t + \left(\hat \lambda_t - \lambda_t\right)^2 \\
& = -\si^4 {\rm Var}_{p_t}\left(\frac{\mathcal L_t f_t}{f_t} \right),
\end{align*}
where, by integration by parts,
\begin{equation}\label{eq:ipp3}
  \lambda_t = \int { \left(\frac{\sigma^2}{2}\Delta u_t - \frac{\sigma^2}{4} |\nabla u_t|^2 + F_t \right) d p_t} = \int { \left(\frac{\sigma^2}{4} |\nabla u_t|^2 + F_t \right) d p_t}.
\end{equation}

As for the left-hand side of \cref{eq:functional_ineq}, we have for the first term
\begin{equation*}
 \int f_t d\hat p_t = \int \sqrt{p_t\hat p_t} dx \ge C >0,
\end{equation*}
by using the Gaussian bounds provided in \Cref{lem:property-p-uniform}.  
Regarding the second term, it holds by using \cref{eq:ipp} and \cref{eq:Lf},
\begin{equation*}
 \int |\nabla f_t|^2 d\hat p_t = - \int f_t \cL_t f_t  d \hat p_t = \sigma^{-2} \int  \left(\frac {\sigma^2}2 \Delta u_t - \frac {\sigma^2}4 |\nabla  u_t|^2 + F_t \right) dp_t -  \sigma^{-2}\hat\lambda_t.
\end{equation*}
Using further \cref{eq:ipp2} and \cref{eq:ipp3}, we obtain
\begin{align*}
 \sigma^{2}\int |\nabla f_t|^2 d\hat p_t & =  \int  \left(\frac {\sigma^2}4 |\nabla  u_t|^2 + F_t\right) dp_t -\int  \left( \frac{\sigma^2}4 |\nabla  \hat u_t|^2 + F_t\right) d\hat p_t \\
 & = \int F_t(d p_t - d\hat p_t) + \frac{\sigma^2}{4}(I(p_t) - I(\hat p_t)) \\
 & \ge \int F_t (dp_t - dp^*) + \frac{\sigma^2}{4}(I(p_t) - I( p^*)),
\end{align*}
where the last inequality follows from the optimality of $\hat p_t$ in \cref{eq:def_hatp}.

By \Cref{thm:functional_ineq} and the above computations, we deduce that 
\begin{align*}
    \frac{d\fF^\sigma(p_t)}{dt} 
    & \le -\frac{(C \si)^2}{C_P}\left(\int F_t (dp_t - dp^*) + \frac{\sigma^2}{4}(I(p_t) - I( p^*))\right)\\
    &\le -\frac{(C \si)^2}{C_P} \left(\fF^\si(p_t)- \fF^\si(p^*)  \right),
\end{align*} 
where  the last inequality is due to \Cref{thm:foc}.
Therefore, the exponential convergence of the free energy \eqref{eq:exp_convergence_energy} follows with 
 a constant \(c = \frac{(C \si)^2}{C_P} \).
 
 In order to obtain the exponential convergence of the relative Fisher information,  define $f^*_t : = \sqrt{p_t / p^*}$, $\cL^*:= \Delta - \nabla u^* \cdot\nabla $, and repeat the previous computation:
 \begin{align*}
     I(p_t|p^*)
     &= 4\int |\nabla f^*_t|^2 d p^* 
     = -4 \int f^*_t \cL^*f^*_t d p^* \\
    & = 4\si^{-2}\left(\int \frac{\d F}{\d p}(p^*,\cdot)(dp_t - dp^*) + \frac{\sigma^2}{4}(I(p_t) - I(p^*))\right)\\
    & \le 4\si^{-2} \left(\fF^\si(p_t)- \fF^\si(p^*)  \right).
 \end{align*}
\end{proof}

\section{Gradient Flow with Relative Entropy}
\label{sec:gradientflow}

Let $p^h_i$  be defined in \cref{eq:discrete GD}. The proof of Theorem~\ref{thm:gradientflow} essentially relies on applying Arzel\`a--Ascoli Theorem to the family $((t,x)\mapsto p^h_{\lfloor t/h\rfloor}(x))_{h>0}.$ To this end, we need to ensure equicontinuity and boundedness in the two subsequent sections.
In the sequel,  we fix a time horizon $T<\infty$ and we denote by $N_h:=\lfloor T/h\rfloor$.  

\subsection{Equicontinuity in Space}

The goal of this section is to obtain uniform Gaussian bounds for the family $(p^h_i)_{h, i\le N_h}$ as in \Cref{lem:property-p} and to deduce equicontinuity in space of the discrete flow.

\begin{prop}\label{cor:p properties}
For some $\ul C, \ol C, \ul c, \ol c>0,$ we have for all $h>0, i\le N_h, x\in\dbR^d,$ 
\[\ul C e^{-\ul c |x|^2} \le p^h_i(x) \le \ol C e^{-\ol c |x|^2}.\]
In addition,  it holds 
\begin{equation*}
 \sup_{h,i\leq N_h} \|\nabla p_i^h\|_{\infty} < +\infty.
\end{equation*}
\end{prop}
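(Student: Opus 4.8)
The plan is to prove the statement by induction on $i$, viewing each $p^h_i$ as the unique minimizer of a generalized free energy and then re-running the analysis of \Cref{sec:mfschrodinger} for that functional while tracking how the structural constants propagate from one step to the next. Recall from the discussion preceding \Cref{cor:existence variational form} that $p^h_i = \argmin_{p\in\cP_H}\tilde\fF^{\si, h^{-1}}(p)$ for the modified potential $\tilde F(p) = F(p) + h^{-1}\int(-\log p^h_{i-1})\,dp$, which still satisfies \Cref{assum:decomposition-F-derivative}; hence \Cref{cor:existence variational form} (and, through it, the results of \Cref{sec:mfschrodinger}) yields the decomposition $p^h_i = e^{-(v^h_i+w^h_i)}/Z^h_i$ with $v^h_i$ strictly convex and $w^h_i$ Lipschitz. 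Let $\ul c_i, \ol c_i$ be the best constants with $\ul c_i I_d \le \nabla^2 v^h_i \le \ol c_i I_d$ and set $C_i := \|\nabla w^h_i\|_\infty \vee \|\nabla^2 w^h_i\|_\infty$; by \Cref{assum:initialdistribution} these are controlled at $i = 0$. Everything reduces to bounding $(\ul c_i, \ol c_i, C_i)$ uniformly over $h \le h_0$ and $i \le N_h$.

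First I would record the effect of one step of the scheme. Writing $p^h_{i-1} = e^{-(v^h_{i-1}+w^h_{i-1})}/Z^h_{i-1}$, the modified potential $\tilde F$ obeys \Cref{assum:decomposition-F-derivative} with $\tilde g = g + h^{-1}v^h_{i-1}$ and $\tilde G = G + h^{-1}w^h_{i-1}$, so the effective constants are $\ul\kappa + h^{-1}\ul c_{i-1}$, $\ol\kappa + h^{-1}\ol c_{i-1}$, $L_G + h^{-1}C_{i-1}$ and $L_{\nabla G} + h^{-1}C_{i-1}$, with entropic parameter $\gamma = h^{-1}$. Since, by \Cref{thm:convergence}, $p^h_i$ is the long-time limit of the Schr\"odinger dynamics \eqref{eq:gradient-flow} associated with $\tilde F$ and $\gamma = h^{-1}$ started from $p^h_{i-1}$ itself, the estimates of \Cref{prop:V-convexity}, \Cref{prop:W-Lipschitz} and \Cref{lem:hessian-u-bounded}, evaluated in the limit $t \to \infty$, apply with these effective constants. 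In particular, from the Riccati equation \eqref{eq:theta-t-evolution} and \Cref{rem:limit convexity} one gets $\nabla^2 v^h_i \ge \theta^*_{\mathrm{eff}} I_d$, where $\theta^*_{\mathrm{eff}}$ is the equilibrium value of the Riccati flow with parameters $(\gamma, \ul\kappa) = (h^{-1}, \ul\kappa + h^{-1}\ul c_{i-1})$; thus $\ul c_i := \theta^*_{\mathrm{eff}}$ is a valid lower bound and satisfies $\ul c_i = \phi_h(\ul c_{i-1})$, with $\phi_h$ a consistent one-step discretization of $\dot\theta = \ul\kappa - \si^2\theta^2$ obeying $|\phi_h(c) - c| \le 2h|\ul\kappa - \si^2 c^2|$ and pushing $c$ monotonically, without overshoot for $h$ small, towards the globally attracting fixed point $c_\infty := \sqrt{\ul\kappa}/\si$. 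The analogous computations for $\ol c_i$ (from the proof of \Cref{prop:V-convexity}(ii), where the dependence on the initial data decays as $t \to \infty$) and for $C_i$ (from \Cref{prop:W-Lipschitz} and \Cref{lem:hessian-u-bounded}) produce coupled recursions in which each constant at step $i$ equals that at step $i-1$ plus $h$ times a locally bounded function of the step-$(i-1)$ constants, again with globally attracting fixed points.

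A discrete Gronwall argument, or equivalently a forward-invariant-region argument for this coupled system, then confines $(\ul c_i, \ol c_i, C_i)$ to a compact subset of $(0,\infty)^3$ for every $i$ — in particular for $i \le N_h$ — provided $h$ is below a threshold depending only on the fixed constants $\ul\eta, \ol\eta, \ul\kappa, \ol\kappa, L_G, L_{\nabla G}, \si$. This gives $h$- and $i$-uniform bounds $\ul c_* I_d \le \nabla^2 v^h_i \le \ol c_* I_d$ and $\|\nabla w^h_i\|_\infty \vee \|\nabla^2 w^h_i\|_\infty \le C_*$, from which the two-sided Gaussian bounds follow exactly as in \Cref{lem:property-p} (via \Cref{lem:property-p-app}): $\ul c_*$-convexity of $v^h_i$ together with $C_*$-Lipschitzness of $w^h_i$ force $u^h_i(x) \ge \mathrm{const} + \tfrac{\ul c_*}{4}|x|^2$ and a matching upper bound, so $e^{-u^h_i}$ is squeezed between two Gaussians and $Z^h_i$ between two positive constants. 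For the gradient bound, write $\nabla p^h_i = -\nabla u^h_i\,p^h_i$ with $u^h_i = v^h_i + w^h_i$; since $\int \nabla u^h_i\,dp^h_i = 0$ and $\int|x|^2\,dp^h_i$ is uniformly bounded (the scheme decreases $\fF^\si$, so $\fF^\si(p^h_i) \le \fF^\si(p_0)$, and \Cref{assum:potential} turns this into a second-moment bound as in \Cref{lem:energy bound for p}), one obtains $|\nabla u^h_i(0)| \le (\ol c_* + C_*)\int|x|\,dp^h_i \le C$, hence $|\nabla u^h_i(x)| \le C(1+|x|)$ and $|\nabla p^h_i(x)| \le C(1+|x|)\,\ol C e^{-\ol c|x|^2}$, which is bounded.

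The main obstacle is that the bound must be uniform in the joint regime $i \le N_h = [T/h]$, $h \to 0$: each estimate of \Cref{sec:mfschrodinger} has to be revisited for its dependence on $\gamma = h^{-1}$ and on effective potential constants that themselves scale like $h^{-1}$, and one must then check that the resulting coupled discrete dynamical system for $(\ul c_i, \ol c_i, C_i)$ admits an $h$-uniform invariant region — the cancellation between the $h^{-1}$ growth of the data and the $h^{-1}$ damping in the HJB equation being precisely what keeps the constants from blowing up. A subsidiary technical point is that the convexity lower bound for the minimizer $p^h_i$ is read off by running the dynamics from $p^h_{i-1}$ and letting $t \to \infty$, using the monotone convergence of the Riccati flow and the uniqueness of the minimizer to reach $\nabla^2 v^h_i \ge \theta^*_{\mathrm{eff}} I_d$.
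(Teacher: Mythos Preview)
Your overall strategy --- tracking the convexity/Lipschitz constants of the decomposition $u^h_i=v^h_i+w^h_i$ by induction on $i$, using the modified potential $\tilde F$ with $\gamma=h^{-1}$ --- is exactly the paper's. Two differences in execution are worth noting. First, the paper works directly with the \emph{stationary} HJB equation \eqref{eq:u^h_i} rather than running the time-dependent dynamics from $p^h_{i-1}$ and sending $t\to\infty$; this yields explicit contractive recursions such as $\|\nabla^2 v^h_{i+1}\|_\infty\le\frac{Ch+\|\nabla^2 v^h_i\|_\infty}{1+\si^2\kappa^*h/2}$ and the analogous one for $\|\nabla w^h_i\|_\infty$, valid for \emph{all} $h>0$ and summable to $\frac{2C}{\si^2\kappa^*}+\ol\eta$ without any small-$h$ restriction or discrete-Gronwall step. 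Second, for the lower convexity bound the paper shows algebraically that the Riccati fixed point satisfies $\theta^h_{i+1}\ge\min(\theta^h_i,\sqrt{\ul\kappa}/\si)$, again avoiding your ``no-overshoot for $h$ small'' caveat.

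There is one genuine gap. Your claimed recursion for $C_i\supset\|\nabla^2 w^h_i\|_\infty$ via \Cref{lem:hessian-u-bounded} does not work: keeping the $e^{-\gamma s}$ factor, the stationary Feynman--Kac representation and reflection coupling give
\[
\|\nabla^2 u^h_i\|_\infty \;\le\; \frac{\ell}{1+hc\si^2}\Big(\|\nabla^2 u^h_{i-1}\|_\infty + h\,L_{\nabla\frac{\d F}{\d p}}\Big),
\]
where $\ell=2\f(R_1)^{-1}>1$ is the reflection-coupling prefactor from \Cref{thm:reflectioncoupling}. For small $h$ the coefficient is $\approx\ell>1$ and the bound blows up over $N_h\sim T/h$ steps; so ``$C_i=C_{i-1}+h\cdot(\text{bounded})$'' is unjustified for this component. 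The paper handles $\|\nabla^2 u^h_i\|_\infty$ by a different device (\Cref{lem: nabla^2 u^i_h bounded}): iterate the Feynman--Kac representation all the way back to $i=0$ and apply reflection coupling \emph{once} to the concatenated time-inhomogeneous diffusion $X^{(t_0,\dots,t_{i-1})}$, so that $\ell$ appears once rather than $\ell^{i+1}$, yielding $\|\nabla^2 u^h_i\|_\infty\le C(T+1)$ directly. Alternatively, you can repair your argument by simply dropping $\|\nabla^2 w^h_i\|_\infty$ from $C_i$: the hypotheses of \Cref{lem:property-p-app} require only $\ul c_i,\ol c_i$ and $\|\nabla w^h_i\|_\infty$ (whose recursions \emph{are} contractive, via synchronous coupling in the convex drift $-\nabla v^h_i$), and $|\nabla v^h_i(0)|$ can be bounded via $\int\nabla v^h_i\,dp^h_i=-\int\nabla w^h_i\,dp^h_i$ together with the $\ol c_i$-Lipschitzness of $\nabla v^h_i$, bypassing $\|\nabla^2 u^h_i\|_\infty$ entirely.
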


\begin{proof}
The Gaussian bounds are a  direct consequence of \Cref{lem:property-p-app}, whose assumptions are satisfied according to Lemmas \ref{lem:v^h_i uniform convex}--\ref{lem:nabla u^h_i(0) bound} below.  As for the second part, it follows from the identity $\nabla p_i^h = p_i^h \nabla \log p^h_i$ by using the Gaussian upperbound above and the fact that $|\nabla \log p^h_i(x)|\leq C(1+|x|)$ according to Lemmas \ref{lem: nabla^2 u^i_h bounded} and \ref{lem:nabla u^h_i(0) bound} below.
\end{proof}

Recall that the mapping $p^h_i$ is a solution to the stationary MFS equation~\cref{eq:p^h_i}. 
In other words, if we denote $u^h_i:=-\log(p_i^h),$ it holds
\begin{equation}\label{eq:u^h_i}
 \frac{\sigma^2}{2} \Delta u^{h}_i - \frac{\sigma^2}{4} \left|\nabla u^{h}_i\right|^2 + \frac{\delta F}{\delta p} \left(p^{h}_i, \cdot\right) +h^{-1} u^h_{i-1} - h^{-1} u^{h}_i  = \lambda^h_i,
\end{equation}
with
\begin{equation}\label{eq:lambda^h_i}
\lambda^h_i = \int_{\dbR^d} \left( \frac{\delta F}{\delta p} \left(p^h_i, \cdot \right) + h^{-1} \left(u^h_{i-1} - u^h_{i}\right) + \frac{\sigma^2}{2} \Delta u^h_i -  \frac{\sigma^2}{4} \left|\nabla u^h_i \right|^2 \right) p^h_i.
\end{equation}
The key point is to observe that we have the decomposition $u^h_i = v^h_i + w^h_i$ with $v^h_i$ uniformly convex and $w^h_i$ uniformly Lipschitz. It comes from using arguments similar to Section~\ref{sec:decomposition}.
In this setting there is a slight ambiguity in the definition of $v^h_i$ (and thus $w^h_i$) due to the normalizing constant $\lambda_i^h$. 
Let us define $v^h_i$ as the solution to
\begin{equation*}
 \frac{\sigma^2}{2} \Delta v^{h}_{i} - \frac{\sigma^2}{4} \left|\nabla v^{h}_{i}\right|^2 + g +h^{-1} v^h_{i-1} - h^{-1} v^{h}_{i} = 0.
\end{equation*}

\begin{lem}\label{lem:v^h_i uniform convex}
The function $(v^h_i)_{h,i\le N_h}$ are uniformly $\eta$--convex for some $\eta>0$.
\end{lem}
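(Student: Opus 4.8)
The plan is to transfer the argument behind Proposition~\ref{prop:V-convexity} from the parabolic HJB equation to the present \emph{stationary} one, replacing the differential Riccati bound \eqref{eq:theta-t-evolution} by its implicit (backward) Euler discretisation. Recall that $v^h_0=v_0$ is $\ul\eta$--convex by Assumption~\ref{assum:initialdistribution}(iii), and that for $i\ge 1$ the function $v^h_i$ solves
\[\frac{\sigma^2}{2}\Delta v^h_i-\frac{\sigma^2}{4}|\nabla v^h_i|^2+g+h^{-1}v^h_{i-1}-h^{-1}v^h_i=\bar\lambda^h_i,\]
which is exactly one backward Euler step of length $h$ for \eqref{eq:HJB-strongly-convex} with $\gamma=0$. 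I would first note that, since each such step has vanishing length $h$, the Cole--Hopf / short--horizon estimates of Section~\ref{sec:decomposition} (the analogue of Lemma~\ref{lem:FBSDEshorthorizon}) apply step by step and give that every $v^h_i$ is $C^2$ with $\nabla^2 v^h_i$ bounded, the bound for step $i$ being controlled in terms of $\|\nabla^2 v^h_{i-1}\|_\infty$. Write $\theta^h_i:=\inf_{x\in\dbR^d}\lambda_{\min}\bigl(\nabla^2 v^h_i(x)\bigr)$, so $\theta^h_0=\ul\eta>0$.

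The core step is a discrete Riccati inequality: there is a constant $c>0$, independent of $h$ and $i$, with
\[\theta^h_i+c\,h\,(\theta^h_i)^2\ \ge\ \theta^h_{i-1}+\ul\kappa\,h.\]
To get it, fix $i$ and pick a point $x_0$ at which $\lambda_{\min}(\nabla^2 v^h_i(\cdot))$ comes within $\varepsilon$ of its infimum, together with a unit eigenvector $e$ of $\nabla^2 v^h_i(x_0)$ for that eigenvalue; the usual $\varepsilon\langle x\rangle$--penalisation makes this selection rigorous on $\dbR^d$. Differentiating the equation twice in the (frozen) direction $e$ gives
\[h^{-1}\partial_{ee}v^h_i=\frac{\sigma^2}{2}\Delta\partial_{ee}v^h_i-\frac{\sigma^2}{2}\nabla v^h_i\cdot\nabla\partial_{ee}v^h_i-\frac{\sigma^2}{2}|\nabla\partial_e v^h_i|^2+\partial_{ee}g+h^{-1}\partial_{ee}v^h_{i-1}.\]
At $x_0$ the scalar function $x\mapsto\partial_{ee}v^h_i(x)$ is (nearly) minimal, so $\nabla\partial_{ee}v^h_i(x_0)=0$ and $\Delta\partial_{ee}v^h_i(x_0)\ge 0$ up to an $O(\varepsilon)$ error; moreover $|\nabla\partial_e v^h_i(x_0)|^2=|\nabla^2 v^h_i(x_0)e|^2=(\theta^h_i)^2$ (again up to $O(\varepsilon)$), since $e$ is the eigenvector for the smallest eigenvalue. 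Combining this with $\partial_{ee}g\ge\ul\kappa$ (Assumption~\ref{assum:decomposition-F-derivative}(i)) and $\partial_{ee}v^h_{i-1}(x_0)\ge\theta^h_{i-1}$, and letting $\varepsilon\downarrow 0$, yields the inequality with $c=\sigma^2/2$ (or $c=\sigma^2$ if one prefers the lossier Cauchy--Schwarz bookkeeping used in Proposition~\ref{prop:V-convexity}).

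Finally I would iterate. Evaluating the same computation at a \emph{maximum} of $\partial_{ee}v^h_i$ gives the non--circular upper bound $\sup_x\lambda_{\max}(\nabla^2 v^h_i(x))\le i h\,\ol\kappa+\ol\eta\le T\ol\kappa+\ol\eta$, uniform in $h$ and $i$; hence for $h$ small the parabola $\Psi_h(\theta):=\theta+ch\theta^2$ is increasing on the range of the $\theta^h_i$, so the Riccati inequality reads $\theta^h_i\ge\Psi_h^{-1}(\theta^h_{i-1}+\ul\kappa h)$ (the spurious, very negative branch being excluded by the a priori Hessian bound). Set $\kappa^*:=\min\bigl(\ul\eta,\sqrt{\ul\kappa/c}\bigr)>0$. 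Since $\ul\kappa\ge c(\kappa^*)^2$, the constant $\kappa^*$ is a super--solution of the recursion: if $\theta^h_{i-1}\ge\kappa^*$ then $\Psi_h(\theta^h_i)\ge\kappa^*+\ul\kappa h\ge\Psi_h(\kappa^*)$, whence $\theta^h_i\ge\kappa^*$. As $\theta^h_0=\ul\eta\ge\kappa^*$, induction over $i\le N_h$ gives $\theta^h_i\ge\kappa^*$ for all $h$ and $i$ --- the discrete counterpart of the fact (Remark~\ref{rem:limit convexity}, $\gamma=0$) that the continuous convexity constant never falls below $\min(\ul\eta,\theta^*)$.

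The hard part will be making the maximum--principle step rigorous on all of $\dbR^d$: one needs the a priori $C^2$--regularity and a \emph{finite} (not yet uniform) Hessian bound for each $v^h_i$, must cope with the non--smoothness of $\lambda_{\min}(\cdot)$ by freezing the eigenvector and working with a scalar second derivative, and must handle the loss of compactness by the penalisation / almost--extremum device --- exactly the technical package that already accompanies Lemma~\ref{lem:FBSDEshorthorizon} and Proposition~\ref{prop:V-convexity}. A minor additional point is that the uniform $h$--smallness needed for $\Psi_h$ to be monotone relies on the uniform upper Hessian bound above. Alternatively one could reproduce the forward--backward SDE / synchronous coupling argument of Proposition~\ref{prop:V-convexity} for the one--step stationary equation, which sidesteps the extremum issue at the price of invoking the FBSDE well--posedness theory; once either route is set up, the remaining algebra of the Riccati recursion is elementary.
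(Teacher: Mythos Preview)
Your approach is correct in spirit but takes a genuinely different route from the paper, and there is one technical gap worth flagging.

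\textbf{Comparison.} The paper does not redo any analysis at all. It simply observes that $v^h_{i+1}$ is the stationary solution of the parabolic HJB \eqref{eq:HJB-strongly-convex} with $\gamma$ replaced by $h^{-1}$ and $g$ replaced by the new potential $g+h^{-1}v^h_i$, which is $(\ul\kappa+h^{-1}\theta^h_i)$--convex. Proposition~\ref{prop:V-convexity} and Remark~\ref{rem:limit convexity} then give that the stationary solution has convexity exactly the Riccati fixed point
\[
\theta^h_{i+1}=\frac{\sqrt{h^{-2}+4\sigma^2(\ul\kappa+h^{-1}\theta^h_i)}-h^{-1}}{2\sigma^2},
\]
and a two--line algebraic manipulation of this closed form (completing the square under the radical) shows $\theta^h_{i+1}\ge\min(\theta^h_i,\sqrt{\ul\kappa}/\sigma)$, hence $\kappa^*=\min(\ul\eta,\sqrt{\ul\kappa}/\sigma)$. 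So the paper \emph{reuses} the FBSDE/coupling machinery already built in Section~\ref{sec:decomposition} rather than reproving anything; this is the ``alternative'' you mention at the end, but taken one step further. Your direct maximum--principle argument on the elliptic equation is more self--contained and even yields a sharper Riccati coefficient ($\sigma^2/2$ versus the paper's $\sigma^2$, because the FBSDE proof of Proposition~\ref{prop:V-convexity} discards the term $-\tfrac{\sigma^2|\delta Y|^2}{2|\delta X|^2}$), but it requires more than the paper provides.

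\textbf{The gap.} Your maximum--principle step evaluates $\Delta\partial_{ee}v^h_i$ at a point, so it needs $v^h_i\in C^4$, not just the $C^2$ (or $C^3$) regularity you invoke. The paper only establishes $C^3$ regularity (Proposition~\ref{prop:regularity-heat-equation}), and Assumption~\ref{assum:decomposition-F-derivative} only asks $g\in C^2$, so bootstrapping to $C^4$ is not automatic. This is not fatal --- one can mollify $g$ and $v^h_{i-1}$, run your argument, and pass to the limit since the convexity bound is stable --- but it is a genuine extra step that the paper's route avoids entirely by working at the level of the already--proven Proposition~\ref{prop:V-convexity}.
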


\begin{proof}
Observe that $v^h_i$ corresponds to the stationary solution to \cref{eq:HJB-strongly-convex} with parameter $\gamma=h^{-1}$ and convex term $g +h^{-1} v^h_{i-1}$ instead of $g.$
Due to \Cref{prop:V-convexity}, $v^h_{i}$ is $\eta^h_{i}$--convex with
\begin{align*}
 \eta^h_{i} &= \frac{\sqrt{h^{-2} + 4 \si^2 \Big(\ul\kappa+h^{-1} \eta^h_{i-1}\Big)} - h^{-1}}{2\si^2}\\
 & \ge  \frac{\sqrt{h^{-2} + 4 \si^2 \Big(\ul\kappa+h^{-1} \min \big(\eta^h_{i-1}, \sqrt{\ul\kappa}/\si\big)\Big)} - h^{-1}}{2\si^2}\\
 & \ge \min \big(\eta^h_{i-1}, \sqrt{\ul\kappa}/\si\big).
\end{align*}
Recall that $\eta^h_0= \ul\eta_0$. Finally we obtain that $v^h_{i}$ is $\min \big(\ul\eta_0, \sqrt{\ul\kappa}/\si\big)$--convex.
\end{proof}

\begin{lem}\label{lem:nabla^2 v^h_i uniform bound}
The Hessian's $(\nabla^2 v^h_i)_{h, i\le N_h}$ are uniformly bounded.
\end{lem}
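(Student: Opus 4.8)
The plan is to transcribe the continuous-time argument of \Cref{prop:V-convexity}(ii), with the time derivative replaced by the backward-Euler increment $h^{-1}(v^h_i - v^h_{i-1})$, and to propagate a Hessian bound by induction on $i$. As a preliminary I would invoke the regularity theory of \Cref{sec:decomposition} (the analogues of \Cref{lem:FBSDEshorthorizon} and \Cref{prop:regularity-heat-equation} applied to the stationary equation defining $v^h_i$): each $v^h_i$ is a classical $C^3$ solution whose gradient grows at most linearly and whose Hessian is bounded, a priori with a constant possibly depending on $h$ and $i$; moreover $v^h_i$ is $\kappa^*$-convex with $\kappa^* = \min(\ul\eta, \sqrt{\ul\kappa}/\si)$ by \Cref{lem:v^h_i uniform convex}. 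Differentiating the equation for $v^h_i$ in $x$ gives the linear elliptic equation
\[
 \frac{\si^2}{2}\Delta \nabla v^h_i - \frac{\si^2}{2}\,\nabla^2 v^h_i\,\nabla v^h_i + \nabla g + h^{-1}\nabla v^h_{i-1} - h^{-1}\nabla v^h_i = 0,
\]
whose zeroth-order term $-h^{-1}\nabla v^h_i$ corresponds to a positive killing rate $h^{-1}$, so that by the Feynman--Kac formula
\[
 \nabla v^h_i(x) = \int_0^\infty e^{-h^{-1}s}\,\dbE\big[\nabla g(X_s) + h^{-1}\nabla v^h_{i-1}(X_s)\big]\,ds,
\]
with $X$ the strong solution to $dX_s = -\frac{\si^2}{2}\nabla v^h_i(X_s)\,ds + \si\,dW_s$, $X_0 = x$. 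The linear growth of $\nabla v^h_i$ ensures non-explosion of $X$ and integrability of the expectations, so this representation is legitimate.

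The next step is the synchronous-coupling estimate, exactly as in \cref{eq:flow-lipschitz}: if $X$ and $X'$ are the diffusions above started at $x$ and $x'$ and driven by the same Brownian motion, then the $\kappa^*$-convexity of $v^h_i$ yields $\frac12\frac{d}{ds}|X_s - X'_s|^2 \le -\frac{\si^2\kappa^*}{2}|X_s - X'_s|^2$, hence $|X_s - X'_s| \le e^{-\si^2\kappa^* s/2}|x - x'|$. Inserting this into the Feynman--Kac representation and using that $\nabla g$ is Lipschitz (with constant $L_g$ depending only on $\ul\kappa, \ol\kappa$) and $\nabla v^h_{i-1}$ is Lipschitz with constant $M_{i-1} := \|\nabla^2 v^h_{i-1}\|_\infty$, then integrating $e^{-h^{-1}s}$ against $e^{-\si^2\kappa^* s/2}$, I obtain
\[
 M_i := \|\nabla^2 v^h_i\|_\infty \le \frac{L_g + h^{-1}M_{i-1}}{h^{-1} + \si^2\kappa^*/2} = \frac{h L_g + M_{i-1}}{1 + h\si^2\kappa^*/2}.
\]

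To close the induction, observe that the affine map $m \mapsto (hL_g + m)/(1 + h\si^2\kappa^*/2)$ is a contraction with unique, $h$-independent fixed point $m^* := 2L_g/(\si^2\kappa^*)$; consequently $M_i \le \max(M_{i-1}, m^*)$ and, iterating, $M_i \le \max(M_0, m^*)$ for every $i \le N_h$. Since $v^h_0 = v_0$ and $\nabla v_0$ is $\ol\eta$-Lipschitz by \Cref{assum:initialdistribution}(ii), we have $M_0 \le \ol\eta$, whence $\sup_{h>0,\, i\le N_h}\|\nabla^2 v^h_i\|_\infty \le \max(\ol\eta,\, 2L_g/(\si^2\kappa^*)) < \infty$. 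The coupling/Feynman--Kac computation is routine once the a priori regularity is in hand, being essentially a copy of \Cref{prop:V-convexity}(ii); the delicate preliminary is exactly that a priori regularity --- establishing that each $v^h_i$ is already a bounded-Hessian classical solution with at most linearly growing gradient, so that the characteristic SDE and its Feynman--Kac representation make sense --- which I would obtain by adapting the short-horizon fixed-point FBSDE argument of \Cref{lem:FBSDEshorthorizon} to the stationary equation. The one genuinely new point compared with the continuous-time case is noticing that the per-step Hessian recursion contracts toward an $h$-independent fixed point, which is what keeps the bound from deteriorating across the $N_h \sim T/h$ steps as $h \to 0$.
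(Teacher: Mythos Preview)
Your proposal is correct and follows essentially the same approach as the paper: a Feynman--Kac representation of $\nabla v^h_i$ along the diffusion driven by $-\tfrac{\si^2}{2}\nabla v^h_i$, synchronous coupling using the uniform $\kappa^*$-convexity from \Cref{lem:v^h_i uniform convex}, and iteration of the resulting recursion $\|\nabla^2 v^h_i\|_\infty \le (hL_g + \|\nabla^2 v^h_{i-1}\|_\infty)/(1+h\si^2\kappa^*/2)$. The only cosmetic differences are that the paper keeps the finite-horizon representation with terminal term $e^{-t/h}\nabla v^h_{i+1}(X_t)$ before sending $t\to\infty$, and solves the recursion explicitly rather than invoking the contraction/fixed-point observation you use; both yield the same $h$-independent bound.
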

\begin{proof}
As in \Cref{prop:V-convexity}, we may obtain the following probabilistic representation:
\begin{gather*}
  \nabla v^h_{i} (x) = \dbE\left[\int_0^t e^{- \frac{s}{h}}\Big(\nabla g(X_s) +h^{-1} \nabla v^h_{i-1} (X_s)\Big) ds + e^{- \frac{t}{h}}\nabla v^h_{i}(X_t)\right],
  \intertext{with}
 X_s = x -\int_0^s\frac{\si^2}{2} \nabla v^h_{i}(X_r) dr + \si W_s.
\end{gather*}
Let $X'$ satisfy the same SDE with initial value $x'$. Since $v^h_{i}$ is $\eta$--convex, it follows from the same arguments as \cref{eq:flow-lipschitz} that
\[\left|X_t -X'_t\right| \le e^{-\frac{\si^2 \eta t}{2}} \left|x-x'\right|.\]
Further we obtain
\begin{align*}
 \left|\nabla v^h_{i} (x) -\nabla v^h_{i} (x') \right|
 & \le \dbE\left[\int_0^t e^{- \frac{s}{h}}(\ol\kappa + h^{-1}\|\nabla^2 v^h_{i-1} \|_\infty) \left|X_s - X'_s\right| ds + e^{- \frac{t}{h}}\|\nabla^2 v^h_{i} \|_\infty \left|X_t - X'_t\right|\right]\\
 & \le \Big(\int_0^t e^{- (\frac{1}{h} + \frac{\si^2 \eta }{2}) s}(\ol\kappa + h^{-1}\|\nabla^2 v^h_{i-1} \|_\infty) ds + e^{-(\frac{1}{h} + \frac{\si^2 \eta }{2})t} \|\nabla^2 v^h_{i} \|_\infty\Big) \left|x-x'\right|.
\end{align*}
Letting $t\rightarrow \infty,$ we get
\[\|\nabla^2 v^h_{i} \|_\infty \le \frac{\ol\kappa h + \|\nabla^2 v^h_{i-1} \|_\infty }{ 1 + \frac{\si^2 \eta h}{2}}.\]
Therefore, we deduce by induction that
\[\|\nabla^2 v^h_{i} \|_\infty
~\le~
\frac{2\ol\kappa }{\si^2 \eta} \bigg(1- \frac{1}{\left( 1+ \frac{\si^2 \eta h}{2}\right)^{i}}\bigg) + \frac{\ol \eta_0 }{\left( 1+ \frac{\si^2 \eta h}{2}\right)^{i} }
~\le~
\frac{2\ol\kappa }{\si^2 \eta} + \ol\eta_0. \]
\end{proof}

\begin{lem}\label{lem:nabla w^h_i uniform bound}
The gradients $(\nabla w^h_i)_{h, i\le N_h}$ are uniformly bounded.
\end{lem}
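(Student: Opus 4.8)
The plan is to follow closely the proof of Proposition~\ref{prop:W-Lipschitz}, the time derivative $\partial_t w$ of~\eqref{eq:HJB-MF-part} being replaced here by the discrete increment $h^{-1}(w^h_i - w^h_{i-1})$; concretely, this amounts to working with discount rate $h^{-1}$ in place of $\gamma$ and with an additional source term $h^{-1}w^h_{i-1}$. The first step is to identify the equation satisfied by $w^h_i$: subtracting the defining equation of $v^h_i$ from~\eqref{eq:u^h_i}, using the decomposition $\frac{\delta F}{\delta p}(p,x) = g(x) + G(p,x)$ and the identity $|\nabla u^h_i|^2 - |\nabla v^h_i|^2 = 2\nabla v^h_i\cdot\nabla w^h_i + |\nabla w^h_i|^2$, one gets the stationary analogue of~\eqref{eq:HJB-MF-part},
\begin{equation*}
 \frac{\si^2}{2}\Delta w^h_i - \frac{\si^2}{2}\nabla v^h_i\cdot\nabla w^h_i - \frac{\si^2}{4}\left|\nabla w^h_i\right|^2 + G\left(p^h_i, \cdot\right) + h^{-1}\left(w^h_{i-1} - w^h_i\right) = \lambda^h_i - \bar\lambda^h_i.
\end{equation*}

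Next I would argue by induction on $i$, the base case $i=0$ being $w^h_0 = w_0$, which is Lipschitz by Assumption~\ref{assum:initialdistribution}(i). Assuming $C_{i-1} := \|\nabla w^h_{i-1}\|_\infty < \infty$, I would represent $w^h_i$, exactly as in Proposition~\ref{prop:W-Lipschitz}, via the discounted infinite-horizon stochastic control problem
\begin{equation*}
 w^h_i(x) = \inf_{\alpha\in\cA}\Expect\left[\int_0^\infty e^{-s/h}\left(\frac{\si^2}{4}|\alpha_s|^2 + G\left(p^h_i, X^\alpha_s\right) + h^{-1}w^h_{i-1}\left(X^\alpha_s\right)\right)ds\right] - h\left(\lambda^h_i - \bar\lambda^h_i\right),
\end{equation*}
with $dX^\alpha_s = -\frac{\si^2}{2}\left(\nabla v^h_i(X^\alpha_s) + \alpha_s\right)ds + \si\,dW_s$ and $X^\alpha_0 = x$. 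Writing $Y^\alpha$ for the solution of the same equation started at $y$ with the same control and Brownian motion, comparing the two representations, and invoking that $G(p^h_i,\cdot)$ is $L_G$-Lipschitz (Assumption~\ref{assum:decomposition-F-derivative}(ii)) while $w^h_{i-1}$ is $C_{i-1}$-Lipschitz, together with the contraction $|Y^\alpha_s - X^\alpha_s| \le e^{-\si^2\kappa^* s/2}|y-x|$ coming from the $\kappa^*$-convexity of $v^h_i$ (Lemma~\ref{lem:v^h_i uniform convex}), obtained exactly as in~\eqref{eq:flow-lipschitz}, I expect --- after integrating in $s$ --- the recursion
\begin{equation*}
 C_i := \left\|\nabla w^h_i\right\|_\infty \le \frac{L_G + h^{-1}C_{i-1}}{h^{-1} + \si^2\kappa^*/2} = \frac{L_G h + C_{i-1}}{1 + \si^2\kappa^* h/2}.
\end{equation*}

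This is exactly the recursion already solved in Lemma~\ref{lem:nabla^2 v^h_i uniform bound}: iterating it from $C_0 = \|\nabla w_0\|_\infty$ yields $\|\nabla w^h_i\|_\infty \le 2L_G/(\si^2\kappa^*) + \|\nabla w_0\|_\infty$ for all $h>0$ and $i \le N_h$, which is the assertion. I expect the only genuinely delicate point to be the rigorous justification of the control representation for the stationary HJB equation with the extra source term $h^{-1}w^h_{i-1}$; this should follow from a standard verification argument (as already invoked for Proposition~\ref{prop:W-Lipschitz}), the value function being finite since $w^h_{i-1}$ has at most quadratic growth by the Gaussian bounds of Proposition~\ref{cor:p properties}, $G$ has linear growth, and the reference diffusion with drift $-\frac{\si^2}{2}\nabla v^h_i$ is uniformly ergodic with well-controlled moments. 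Alternatively, one may bypass the value-function interpretation and derive the Lipschitz bound directly from a Feynman--Kac/Cole--Hopf representation of $\nabla w^h_i$, in the spirit of Lemma~\ref{lem:nabla^2 v^h_i uniform bound}.
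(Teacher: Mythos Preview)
Your proposal is correct and follows essentially the same approach as the paper: a stochastic-control representation of $w^h_i$ with discount $h^{-1}$, the contraction $|Y^\alpha_s - X^\alpha_s|\le e^{-\sigma^2\kappa^* s/2}|y-x|$ from the uniform convexity of $v^h_i$, and the resulting recursion $\|\nabla w^h_{i+1}\|_\infty \le (Ch+\|\nabla w^h_i\|_\infty)/(1+\sigma^2\kappa^* h/2)$ iterated exactly as in \Cref{lem:nabla^2 v^h_i uniform bound}. One small point: your appeal to \Cref{cor:p properties} for the growth of $w^h_{i-1}$ would be circular (that proposition relies on the present lemma), but it is also unnecessary---your own induction hypothesis already gives $\|\nabla w^h_{i-1}\|_\infty<\infty$, hence linear growth, which suffices for the representation; the paper sidesteps this by writing the finite-horizon value function with terminal term $e^{-T/h}w^h_{i+1}(X^\alpha_T)$ and letting $T\to\infty$.
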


\begin{proof}
Observe that $w^h_i = u^h_i - v^h_i$ satisfies
\begin{equation*}
\frac{\sigma^2}{2} \Delta w^h_i - \frac{\sigma^2}{2} \nabla v^h_i \cdot \nabla w^h_i - \frac{\sigma^2}{4} \left|\nabla w^h_i\right|^2 + G\left(p^h_i, \cdot\right) + h^{-1} w^h_{i-1} - h^{-1} w^h_i = \lambda^h_i.
\end{equation*}
As in \Cref{prop:W-Lipschitz}, we observe that $w^h_{i}$ is the value function of the following stochastic control problem:
\begin{equation*}
 w^h_{i}(x) = \inf_{\a}\Expect \Big[\int_0^t e^{- \frac{s}{h} }\left(G\left(p^h_{i}, X^{\alpha}_s\right) + h^{-1}w^h_{i-1}( X^{\alpha}_s) + \frac{\sigma^2}{4}|\alpha_s|^2 - \lambda^h_i\right) ds + e^{- \frac{t}{h}} w^h_{i}\left( X^{\alpha}_t\right)\Big],
\end{equation*}
with
\[dX^{\alpha}_s = - \frac{\sigma^2}{2} \left( \nabla v^h_{i} \left( X^{\alpha}_s \right) + \alpha_s \right)ds + \sigma dW_s, \quad X^{\alpha}_0 = x.
\]
Further as in \cref{eq:diff valuefunctions}, we may estimate
\begin{equation*}
 \left|w^h_{i}(x) -w^h_{i}(x') \right|
\le \Big(\int_0^t e^{-(\frac{1}{h} + \frac{\si^2 \eta }{2})s}(L_G + h^{-1} \|\nabla w^h_{i-1}\|_\infty)
+ e^{-(\frac{1}{h} + \frac{\si^2 \eta }{2})t} \|\nabla w^h_{i}\|_\infty \Big)|x-x'|.
\end{equation*}
Letting $T\rightarrow \infty,$ we obtain
\[\|\nabla w^h_{i} \|_\infty \le \frac{L_G h + \|\nabla w^h_{i-1} \|_\infty }{ 1 + \frac{\si^2 \eta h}{2}}.\]
Therefore, we deduce by induction that
\[\|\nabla w^h_{i} \|_\infty
~\le~
\frac{2L_G}{\si^2 \eta} +L_0 . \]
\end{proof}

\begin{lem}\label{lem: nabla^2 u^i_h bounded}
The Hessians $(\nabla^2 u^h_i)_{h, i\le N_h}$ are uniformly bounded.
\end{lem}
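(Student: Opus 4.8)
The plan is to follow the strategy of \Cref{lem:hessian-u-bounded}: derive a self-referential Feynman--Kac representation for $\nabla u^h_{i+1}$, apply the refined reflection coupling, and close by an induction on $i$. Since $\nabla^2 u^h_i = \nabla^2 v^h_i + \nabla^2 w^h_i$ and $\|\nabla^2 v^h_i\|_\infty$ is already controlled by \Cref{lem:nabla^2 v^h_i uniform bound}, it would suffice to bound $\|\nabla^2 w^h_i\|_\infty$ uniformly, but it is cleaner to argue directly with $u^h_i$.

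First I would differentiate the stationary equation~\eqref{eq:u^h_i}. Using the identity $\nabla^2\varphi\,\nabla\varphi = (\nabla\varphi\cdot\nabla)\nabla\varphi$ (symmetry of the Hessian), the vector field $\phi := \nabla u^h_{i+1}$ solves the \emph{linear} equation
\[
 \tfrac{\sigma^2}{2}\Delta\phi - \tfrac{\sigma^2}{2}\nabla u^h_{i+1}\cdot\nabla\phi - h^{-1}\phi + \nabla\tfrac{\delta F}{\delta p}(p^h_{i+1},\cdot) + h^{-1}\nabla u^h_i = 0,
\]
with no zeroth-order matrix term. Letting $X$ solve $dX_s = -\tfrac{\sigma^2}{2}\nabla u^h_{i+1}(X_s)\,ds + \sigma\,dW_s$, $X_0 = x$ (well posed since $\nabla u^h_{i+1}$ is Lipschitz), the Feynman--Kac formula together with the linear growth of $\nabla u^h_{i+1}$ (which kills the boundary term at $s=\infty$) gives
\[
 \nabla u^h_{i+1}(x) = \dbE\Big[\int_0^\infty e^{-s/h}\big(\nabla\tfrac{\delta F}{\delta p}(p^h_{i+1},X_s) + h^{-1}\nabla u^h_i(X_s)\big)\,ds\Big].
\]

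Then I would exploit the decomposition $u^h_{i+1} = v^h_{i+1} + w^h_{i+1}$: by \Cref{lem:v^h_i uniform convex} and \Cref{lem:nabla w^h_i uniform bound}, $v^h_{i+1}$ is $\kappa^*$-convex and $\|\nabla w^h_{i+1}\|_\infty$ is bounded by a constant independent of $h$ and $i$, so by \Cref{rem:decom-drift} the drift $-\tfrac{\sigma^2}{2}\nabla u^h_{i+1}$ satisfies \Cref{assum:reflectioncoupling} with rate and constants independent of $h,i$. Applying \Cref{thm:reflectioncoupling} to two copies $X^x,X^{x'}$ of this diffusion, together with Kantorovich duality and the uniform Lipschitz bound on $\nabla\tfrac{\delta F}{\delta p}(p,\cdot)$ from \Cref{assum:decomposition-F-derivative}(i),(iii), one arrives at a recursion of the shape
\[
 M_{i+1} \le \frac{c_1 h + M_i}{1 + c_2\sigma^2 h}
\]
for a suitable Lipschitz-type seminorm $M_i$ of $\nabla u^h_i$, with $c_1,c_2>0$ independent of $h,i$. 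Since $M_0<\infty$ by \Cref{assum:initialdistribution}(ii) and the geometric factor is $\le 1$, iterating over $i\le N_h$ yields $M_i \le M_0 + c_1/(c_2\sigma^2)$ uniformly in $h$ and $i\le N_h$, whence the claim.

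The main obstacle is making the iteration work despite $N_h = \lfloor T/h\rfloor \to \infty$: the per-step amplification factor must be $\le 1$, so it is essential that the refined reflection coupling contracts with constant exactly $1$ in its adapted (concave-weighted) Wasserstein distance $\cW_f$ --- equivalently, that one runs the recursion for the $f$-adapted seminorm $M^f_i := \sup_{x\ne x'}|\nabla u^h_i(x) - \nabla u^h_i(x')|/f(|x-x'|)$, for which the factor is genuinely $1/(1+c_2\sigma^2 h) < 1$, and only at the very end converts back to $\|\nabla^2 u^h_i\|_\infty$ using $c_f\, r \le f(r) \le r$. The naive bound $\cW_1 \le C e^{-ct}\cW_1$ with $C>1$ is useless here, since it would produce a factor $\big(C/(1+c_2\sigma^2 h)\big)^{N_h}\to\infty$. (A more elementary semiconcavity/second-difference estimate on the value function $w^h_{i+1}$ appears to fail, because controlling the second difference of the trajectory flow would require a bound on $\nabla^3 v^h_{i+1}$, which is unavailable as $g$ is only $C^2$.)
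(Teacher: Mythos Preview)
Your approach is correct and takes a genuinely different route from the paper's.

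The paper does \emph{not} run a one-step scalar recursion. Instead it unrolls the Feynman--Kac representation for $\nabla u^h_{i+1}$ all the way down to $\nabla u_0$: after $i+1$ iterations one obtains a representation involving a \emph{time-inhomogeneous} diffusion $X^{(t_0,\dots,t_{i-1})}$ whose drift switches from $-\tfrac{\sigma^2}{2}\nabla u^h_{i+1}$ to $-\tfrac{\sigma^2}{2}\nabla u^h_i$ at time $t_0$, then to $-\tfrac{\sigma^2}{2}\nabla u^h_{i-1}$ at time $t_0+t_1$, and so on. Because all these drifts satisfy \Cref{assum:reflectioncoupling} with the \emph{same} function $\kappa$ (by \Cref{lem:v^h_i uniform convex,lem:nabla w^h_i uniform bound} and \Cref{rem:decom-drift}), a single application of \Cref{thm:reflectioncoupling} to the concatenated process yields $\cW_1$-contraction with the multiplicative constant $\ell>1$ appearing only \emph{once}, not $(i+1)$ times. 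The source terms coming from $\nabla\tfrac{\delta F}{\delta p}$ then accumulate merely additively, to $\tfrac{(i+1)Ch}{1+ch}|x-y|\le C T|x-y|$, and the terminal term involving $\nabla u_0$ contributes a further $C|x-y|$.

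Your alternative --- running the recursion in the concave-weighted seminorm $M^f_i$, for which the reflection coupling contracts with constant exactly $1$ --- is arguably cleaner and even yields a bound independent of $T$. It does require observing that the auxiliary function $f$ (and hence the seminorm itself) can be chosen uniformly in $h,i$; this follows for the same reason as above, since a single $\kappa$ governs all steps, but you should make this explicit. The paper's route trades this observation for the mild extra bookkeeping of concatenated diffusions, with the payoff that it stays entirely within the $\cW_1$ statement of \Cref{thm:reflectioncoupling} and never needs to open up its proof.
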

\begin{proof}
As in the proof of \Cref{lem:hessian-u-bounded}, the Feynman--Kac formula ensures that
\begin{gather*}
  \nabla u^h_{i}(x)
= \dbE\left[\int_0^\infty e^{-\frac{t}{h}}\left(\nabla \frac{\d F}{\d p}(p^h_{i}, X_t) + h^{-1} \nabla u^h_{i-1}(X_t)\right)dt\right],
\intertext{with}
  X_t =x -\frac{\si^2}{2}\int_0^t\nabla u^h_{i}(X_s)ds+\si W_t.
\end{gather*}
Let $Y$ satisfy the same SDE starting from $y$. By the reflection coupling in \Cref{thm:reflectioncoupling}, it holds
\[\cW_1\left(p^X_t, p^Y_t\right) \le Ce^{-ct} |x-y|, \]
where $p^X$ and $p^Y$ are the marginal distribution of $X$ and $Y$ respectively.
Then it follows by Kantorovitch duality that
\begin{align*}
 \big|\nabla u^h_{i}(x)-\nabla u^h_{i}(y)\big|
 &\le \int_0^\infty Ce^{-\frac{t}{h}-ct} |x-y| + \int_0^\infty e^{-\frac{t}{h}} h^{-1} \dbE\left[\big|\nabla u^h_{i-1}(X_t)-\nabla u^h_{i-1}(Y_t)\big| \right]\\
 & = \frac{Ch}{1+ch}|x-y| + \int_0^\infty e^{-\frac{t}{h}} h^{-1} \dbE\left[\big|\nabla u^h_{i-1}(X_t)-\nabla u^h_{i-1}(Y_t)\big| \right]dt .
\end{align*}
Next apply the same estimate on $\big|\nabla u^h_{i-1}(X_t)-\nabla u^h_{i-1}(Y_t)\big|$, and obtain
\begin{multline*}
 \big|\nabla u^h_{i}(x)-\nabla u^h_{i}(y)\big|
 \le \frac{2Ch}{1+ch}|x-y|\\
  + \int_0^\infty e^{-\frac{t_1}{h}} h^{-1} \int_0^\infty e^{-\frac{t_2}{h}} h^{-1} \dbE\left[\big|\nabla u^h_{i-2}\big(X^{(1)}_{t_1+t_2}\big)-\nabla u^h_{i-2}\big(Y^{(1)}_{t_1+t_2}\big)\big|\right]dt_2 dt_1,
\end{multline*}
with
\begin{equation*}
 X^{(1)}_0=x, \quad dX^{(1)}_t =
 \begin{cases}
 -\frac{\si^2}{2} \nabla u^h_{i}\big(X^{(1)}_t \big)dt + \si dW_t, & \mbox{for $t\in [0, t_1)$}\\
 -\frac{\si^2}{2} \nabla u^h_{i-1}\big(X^{(1)}_t \big)dt + \si dW_t, &\mbox{for $t\ge t_1$}.
 \end{cases}  
\end{equation*}
By repeating the procedure, we eventually obtain for $i\ge 1$
\begin{multline*}
 \big|\nabla u^h_{i}(x)-\nabla u^h_{i}(y)\big|
 \le \frac{Ch i}{1+ch}|x-y|\\
  + \int_0^\infty \cdots \int_0^\infty e^{-\frac{1}{h}\sum_{j=1}^{i} t_j } h^{-i} \dbE\left[\Big|\nabla u_0\Big(X^{(i-1)}_{\sum_{j=1}^i t_j}\Big)-\nabla u_0\Big(Y^{(i-1)}_{\sum_{j=1}^i t_j}\Big)\Big|\right]dt_i \cdots dt_1,
\end{multline*}
with
\begin{equation*}
 X^{(i-1)}_0=x, ~~ dX^{(i-1)}_t = -\frac{\sigma^2}{2}\nabla u^h_{j}\big(X^{(i-1)}_t \big)dt + \si dW_t,~~\mbox{for $t\in[t_{i-j}, t_{i+1-j})$}.
\end{equation*}  
Again it follows from the reflection coupling that
\[ \cW_1\left(p^{X^{(i-1)}}_t, p^{Y^{(i-1)}}_t \right) \le Ce^{-ct}|x-y|,\]
where $p^{X^{(i-1)}}, ~p^{Y^{(i-1)}}$ are the marginal distribution of $X^{(i-1)}, ~Y^{(i-1)}$ respectively. In particular, the constants $c, ~C$ do not depend on $(t_1, \cdots, t_{i-1})$ by Lemmas~\ref{lem:v^h_i uniform convex}--\ref{lem:nabla w^h_i uniform bound}. Finally we get
\begin{align*}
 \big|\nabla u^h_{i}(x)-\nabla u^h_{i}(y)\big|
 &\le \frac{Chi}{1+ch}|x-y|+ C \int_0^\infty \cdots \int_0^\infty e^{-(\frac{1}{h}+c) \sum_{j=1}^i t_j} h^{-i} |x-y| dt_i \cdots dt_1\\
 &\le C(T+1)|x-y|,
\end{align*}
and the desired result follows.
\end{proof}

\begin{lem}\label{lem:nabla u^h_i(0) bound}
The vectors $\big(\nabla u^h_i(0)\big)_{h, i\le N_h}$ are uniformly bounded.
\end{lem}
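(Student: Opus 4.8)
The plan is to transpose the proof of the continuous-time bound \Cref{lem:nabla u uniform bound} to the discrete flow, the only new ingredient being a discrete analogue of the energy estimate \Cref{lem:energy bound for p}. First I would establish monotonicity of the free energy along the scheme \eqref{eq:discrete GD}: since $p^h_i$ minimizes $p\mapsto\fF^\si(p)+h^{-1}H(p\,|\,p^h_{i-1})$ over $\cP_H$, testing minimality against $q=p^h_{i-1}$ and discarding the nonnegative term $h^{-1}H(p^h_i\,|\,p^h_{i-1})$ gives $\fF^\si(p^h_i)\le\fF^\si(p^h_{i-1})$; iterating yields $\fF^\si(p^h_i)\le\fF^\si(p_0)$ for all $h>0$ and $i\le N_h$. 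For this step to be legitimate I need each $p^h_j$ to lie in $\cP_H$ with finite free energy, which follows by induction from \Cref{cor:existence variational form} (starting from $p^h_0=p_0$, which satisfies \Cref{assum:initialdistribution} and hence belongs to $\cP_H$ with $\fF^\si(p_0)<\infty$).

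Next, exactly as in the proof of \Cref{lem:energy bound for p}, I would use \Cref{assum:potential} to write
\[\fF^\si(p^h_i)=F(p^h_i)+\si^2 I(p^h_i)\ge \lambda\int_{\dbR^d}|x|^2 p^h_i(x)\,dx+\si^2\int_{\dbR^d}|\nabla\sqrt{p^h_i}(x)|^2\,dx,\]
so that the monotonicity above furnishes the uniform bounds $\sup_{h,\,i\le N_h}\int_{\dbR^d}|x|^2 p^h_i(x)\,dx<\infty$ and $\sup_{h,\,i\le N_h}\int_{\dbR^d}|\nabla\sqrt{p^h_i}(x)|^2\,dx<\infty$.

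Finally I would conclude as in \Cref{lem:nabla u uniform bound}. By \Cref{lem: nabla^2 u^i_h bounded} there is a constant $L$, independent of $h$ and $i$, with $\|\nabla^2 u^h_i\|_\infty\le L$; hence $|\nabla u^h_i(x)|\ge|\nabla u^h_i(0)|-L|x|$ and therefore $|\nabla u^h_i(x)|^2\ge\frac12|\nabla u^h_i(0)|^2-L^2|x|^2$ for every $x$. Integrating against $p^h_i$ and using the identity $\int_{\dbR^d}|\nabla u^h_i|^2 p^h_i=4\int_{\dbR^d}|\nabla\sqrt{p^h_i}|^2=4I(p^h_i)$, I obtain $|\nabla u^h_i(0)|^2\le 8 I(p^h_i)+2L^2\int_{\dbR^d}|x|^2 p^h_i\le C$ uniformly in $h$ and $i\le N_h$, which is the claim. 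There is no serious obstacle here; the only point requiring care is the bookkeeping in the first step ensuring the scheme never leaves $\cP_H$ (so that comparing free energies is meaningful), and this is precisely what \Cref{cor:existence variational form} provides.
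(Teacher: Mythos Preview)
Your proposal is correct and follows essentially the same route as the paper: monotonicity of $\fF^\si$ along the discrete scheme via the variational definition and nonnegativity of relative entropy, then \Cref{assum:potential} to bound uniformly the second moment and Fisher information of $p^h_i$, and finally the uniform Hessian bound from \Cref{lem: nabla^2 u^i_h bounded} to control $|\nabla u^h_i(0)|$. The only addition in your write-up is the explicit bookkeeping that each $p^h_j\in\cP_H$ via \Cref{cor:existence variational form}, which the paper leaves implicit.
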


\begin{proof}
The proof follows similar arguments as \Cref{lem:energy bound for p} and \Cref{lem:nabla u uniform bound}.
First we observe that the sequence $\fF^\si( p^h_{i})$ is non--increasing as
\[ \fF^\si( p^h_{i}) \le \fF^\si( p^h_{i}) + h^{-1} H( p^h_{i}| p^h_{i-1}) \le \fF^\si( p^h_{i-1}) + h^{-1} H( p^h_{i-1}| p^h_{i-1}) =  \fF^\si( p^h_{i-1}),\]
by using \cref{eq:discrete GD} for the second inequality.
In addition, it follows from Assumption~\ref{assum:potential} that 
\begin{equation*}
  \lambda \int_{\dbR^d}|x|^2 p^h_i(x) dx + \si^2 \int_{\dbR^d} |\nabla \sqrt{p^h_i}(x)|^2 dx 
\le \fF^\si( p^h_i) .
\end{equation*}
Therefore we have
\begin{equation*}
 \sup_{h, i\le N_h}\left\{ \lambda \int_{\dbR^d}|x|^2 p^h_i(x) dx + \si^2 \int_{\dbR^d} |\nabla \sqrt{p^h_i}(x)|^2 dx \right\}
\le \fF^\si( p_0) .
\end{equation*}
Since we have proved that $L:=\sup_{h, i\le N_h} \|\nabla^2 u^h_i\|_\infty <\infty$, we deduce that
\[4 \int_{\dbR^d} |\nabla \sqrt{p^h_i} (x)|^2 dx
= \int_{\dbR^d}  |\nabla u^h_i(x)|^2 p^h_i (x) dx
\ge \frac12 |\nabla u^h_i(0)|^2 - L^2 \int_{\dbR^d} |x|^2 p^h_i(x) dx.\]
Finally we obtain $\sup_{h, i\le N_h}|\nabla u^h_i (0)|<\infty $.
\end{proof}

\subsection{Equicontinuity in Time}

We aim to show the equicontinuity in time of the family  $(p^h)_{h>0}$ as stated in the proposition below. We also demonstrate as a preliminary step and for later use that the family of function $(t\mapsto \lambda^h_{\lfloor t/h\rfloor})_{h>0}$ defined by \eqref{eq:lambda^h_i} is bounded and equicontinuous.

\begin{prop}
\label{lem:compactness-lambda^h_i}
There exists constants \(C,c>0\) such that for all $h>0,$ $i<j\leq N_h,$ $x\in\dbR^d,$
\[
 |p^h_j(x) - p^h_i(x)| \leq C \exp(- c|x|^2) (j - i) h.
\]
Additionally, the sequence \((\lambda_i^h)_{h.i\leq N_h}\) is uniformly bounded,\ie,
\(
\sup_{h,i\leq N_h} |\lambda_i^h | < +\infty,
\)
and there exists a modulus of continuity \(\varpi : \mathbb R_+ \to \mathbb R_+\) such that for all $h>0,$ $ i<j\leq N_h,$
\[
 |\lambda_j^h - \lambda_i^h|
\leq \varpi ((j-i)h).
\]
\end{prop}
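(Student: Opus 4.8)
The plan is to establish the three claims in the order: uniform boundedness of $(\lambda^h_i)$, then the Lipschitz--in--time Gaussian bound on $(p^h_i)$, then the modulus of continuity of $(\lambda^h_i)$, since each step feeds the next. Everything is built on the pointwise identity~\eqref{eq:u^h_i}, rewritten as
\[
 h^{-1}\big(u^h_i - u^h_{i-1}\big) = \frac{\si^2}{2}\Delta u^h_i - \frac{\si^2}{4}\big|\nabla u^h_i\big|^2 + \frac{\d F}{\d p}\big(p^h_i, \cdot\big) - \lambda^h_i,
\]
and on the uniform estimates already available: the Gaussian bounds on $p^h_i$ from \Cref{cor:p properties}, the bounds $|\nabla u^h_i(x)| \le C(1+|x|)$ and $\|\nabla^2 u^h_i\|_\infty \le C$ from \Cref{lem: nabla^2 u^i_h bounded} and \Cref{lem:nabla u^h_i(0) bound}, and the growth estimate $\big|\tfrac{\d F}{\d p}(p^h_i, x)\big| \le C(1+|x|^2)$ uniformly in $h,i$ (which follows from \Cref{assum:decomposition-F-derivative}: $g$ has bounded Hessian and $\{p^h_i\}$ is $\cW_1$--relatively compact by the Gaussian bounds, so $p\mapsto G(p,0)$ is bounded on its closure). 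Integrating the identity against $p^h_i$ and using the integration by parts $\int \Delta u^h_i\, p^h_i = \int |\nabla u^h_i|^2 p^h_i$ gives
\[
 \lambda^h_i = \si^2 I\big(p^h_i\big) + \int_{\dbR^d}\frac{\d F}{\d p}\big(p^h_i, x\big)\,p^h_i(x)\,dx + h^{-1}H\big(p^h_i\,\big|\,p^h_{i-1}\big).
\]
Here $\si^2 I(p^h_i) \le \fF^\si(p^h_i) \le \fF^\si(p_0)$ since $\fF^\si\ge 0$ and the scheme is energy--decreasing (plug $p^h_{i-1}$ into the objective of~\eqref{eq:discrete GD}), the middle term is $\le \int C(1+|x|^2)\,\ol C e^{-\ol c|x|^2}\,dx < \infty$, and $h^{-1}H(p^h_i\,|\,p^h_{i-1}) \le \fF^\si(p^h_{i-1}) - \fF^\si(p^h_i) \le \fF^\si(p_0)$ again by~\eqref{eq:discrete GD}; this proves the second claim.

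For the Gaussian estimate, the displayed rearrangement of~\eqref{eq:u^h_i} together with the uniform bounds above yields $|u^h_i(x) - u^h_{i-1}(x)| \le C h(1+|x|^2)$. Since $|p^h_i - p^h_{i-1}| = |e^{-u^h_i} - e^{-u^h_{i-1}}| \le \max(p^h_i, p^h_{i-1})\,|u^h_i - u^h_{i-1}|$ and $\max(p^h_i,p^h_{i-1})(x)\le \ol C e^{-\ol c|x|^2}$, we obtain $|p^h_i(x) - p^h_{i-1}(x)| \le C h(1+|x|^2)e^{-\ol c|x|^2} \le Ch\, e^{-c|x|^2}$, and summing over the $j-i$ consecutive steps gives the first claim. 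As a by-product, using $\log(p^h_i/p^h_{i-1}) = u^h_{i-1} - u^h_i$ we get the algebraic identity
\[
 H\big(p^h_i\,\big|\,p^h_{i-1}\big) + H\big(p^h_{i-1}\,\big|\,p^h_i\big) = -\int_{\dbR^d}\big(p^h_i - p^h_{i-1}\big)\big(u^h_i - u^h_{i-1}\big)\,dx,
\]
whose right--hand side is $\le \int \ol C e^{-\ol c|x|^2}\,|u^h_i - u^h_{i-1}|^2\,dx \le Ch^2$; since both relative entropies are non--negative this gives $h^{-1}H(p^h_i\,|\,p^h_{i-1}) \le Ch$ for all $h,i$, which will supply the entropy part of the modulus of continuity.

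For the modulus of continuity I split $\lambda^h_j - \lambda^h_i$ along the three terms of the formula above. The entropy contribution is $\le h^{-1}H(p^h_j|p^h_{j-1}) + h^{-1}H(p^h_i|p^h_{i-1}) \le Ch \le C(j-i)h$ (since $j>i$). For $\int\tfrac{\d F}{\d p}(p^h_i,\cdot)p^h_i$ I write the increment as $\int\tfrac{\d F}{\d p}(p^h_j,\cdot)(p^h_j - p^h_i) + \int\big(\tfrac{\d F}{\d p}(p^h_j,\cdot) - \tfrac{\d F}{\d p}(p^h_i,\cdot)\big)p^h_i$: the first integral is $O((j-i)h)$ by the Gaussian estimate, and in the second the $g$--part cancels, leaving $G(p^h_j,\cdot)-G(p^h_i,\cdot)$, controlled by a modulus of $\cW_1(p^h_j,p^h_i)$ via \Cref{assum:decomposition-F-derivative} (using $|\nabla G(p,x)-\nabla G(p',x)|\le L_{\nabla G}\cW_1(p,p')$ and the $\cW_1$--uniform continuity of $G(\cdot,0)$ on $\overline{\{p^h_i\}}$), with $\cW_1(p^h_j,p^h_i)\le \int|x|\,|p^h_j - p^h_i|\,dx \le C(j-i)h$. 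Finally $I(p^h_j) - I(p^h_i) = \tfrac14\int(|\nabla u^h_j|^2 - |\nabla u^h_i|^2)p^h_j + \tfrac14\int|\nabla u^h_i|^2(p^h_j - p^h_i)$, where the last integral is $O((j-i)h)$ and the first requires controlling $\nabla u^h_j - \nabla u^h_i$. This is the main obstacle: there is no uniform bound on third derivatives of $u^h_i$, so I cannot simply differentiate~\eqref{eq:u^h_i}; instead I will use the elementary interpolation inequality (if $\|\nabla^2 f\|_\infty\le M$ and $|f|\le \varepsilon$ on a unit ball then $|\nabla f|\le 2\sqrt{M\varepsilon}+4\varepsilon$ at the centre) applied to $f = u^h_j - u^h_i$, for which $|f(x)| \le C(1+|x|^2)(j-i)h$ and $\|\nabla^2 f\|_\infty \le 2C$, to obtain $|\nabla u^h_j(x) - \nabla u^h_i(x)| \le C(1+|x|)\sqrt{(j-i)h} + C(1+|x|^2)(j-i)h$. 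Combined with $|\nabla u^h_j + \nabla u^h_i|\le C(1+|x|)$ and the Gaussian bound on $p^h_j$, the first integral is then $O(\sqrt{(j-i)h})$. Adding the three contributions gives $|\lambda^h_j - \lambda^h_i| \le \varpi((j-i)h)$ for a modulus $\varpi$; concretely one may take $\varpi(r) := \sup\{\,|\lambda^h_j - \lambda^h_i| : h>0,\ i<j\le N_h,\ (j-i)h\le r\,\}$, which the above estimates show tends to $0$ as $r\to 0$.
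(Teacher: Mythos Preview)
Your proof is correct and follows essentially the same overall skeleton as the paper (bound $\lambda^h_i$, derive the Gaussian time--Lipschitz estimate on $p^h_i$, then control $\lambda^h_j-\lambda^h_i$ term by term), but you diverge from the paper in two substantive places, both in a more elementary direction. First, the paper never writes $\lambda^h_i$ as an exact sum of Fisher information, potential and relative entropy; instead it derives the sandwich $\int p^h_k B^h_k \le \lambda^h_k \le \int p^h_{k-1} B^h_k$ by applying Jensen to the forward and backward normalisation identities, and then only needs $|\lambda^h_k-\int p^h_k B^h_k|=O(h)$. Your exact formula together with the symmetrised--entropy identity $H(p^h_i|p^h_{i-1})+H(p^h_{i-1}|p^h_i)=-\int(p^h_i-p^h_{i-1})(u^h_i-u^h_{i-1})\le Ch^2$ is a clean replacement that gives the same $O(h)$ control on the entropy contribution without any Jensen trick. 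Second, and more strikingly, the paper obtains the time regularity $|\nabla u^h_j-\nabla u^h_i|\le C((j-i)h)^{1/2}(1+|x|)$ through a dedicated lemma (\Cref{lem:time-regularity-u_i^h}) that iterates the heat--kernel representation of $\nabla u^h_k$ and sums gamma densities; your Landau--type interpolation (bounding $|\nabla f|$ from $\|f\|_\infty$ and $\|\nabla^2 f\|_\infty$ on a unit ball) recovers exactly the same estimate in one line, using only the uniform Hessian bound from \Cref{lem: nabla^2 u^i_h bounded} and the $|u^h_j-u^h_i|\le C(1+|x|^2)(j-i)h$ you have already established. The paper's route isolates \Cref{lem:time-regularity-u_i^h} as a reusable statement, but for the purposes of this proposition your argument is shorter and entirely self--contained. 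Both approaches land on a modulus of the form $\varpi(r)=O(\sqrt r)$.
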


\begin{proof}
\noindent{\rm (i).} \emph{Formulas for \(\lambda_i^h\).}
The normalization condition for \(u_{i}^h, i \le N_h,\) writes
\begin{align*}
1 = \int \exp(- u_{i}^h) & = \int \exp (- u_{i-1}^h) \exp \left(- h \frac{u_{i}^h - u_{i-1}^h}{h} \right) \\
& = \int p_{i-1}^h \exp \left( -h \left( \frac{\sigma^2}{2} \Delta u_{i}^h - \frac{\sigma^2}{4} |\nabla u_{i}^h|^2 + \frac{\delta F}{\delta p} (p_{i}^h, \cdot) - \lambda_{i}^h \right) \right).
\end{align*}
where the latter follows from \cref{eq:u^h_i}. 
This allows us to obtain the following formula for \(\lambda_{i}^h\):
\begin{equation}
\label{eq:lambda_i+1^h}
\lambda_{i}^h = - \frac 1h \log \int p_{i-1}^h \exp(-h B_{i}^h),
\end{equation}
where 
\begin{equation*}
 B_{i}^h := \frac{\sigma^2}{2} \Delta u_{i}^h - \frac{\sigma^2}{4} |\nabla u_{i}^h|^2 + \frac{\delta F}{\delta p} (p_{i}^h, \cdot).
\end{equation*}
By writing the normalization in the backward way,
\[
1 = \int \exp(-u_{i-1}^h) = \int \exp(-u_{i}^h) \exp\left(h\frac{u_{i}^h - u_{i-1}^h}{h}\right) = \int \exp(-u_{i}^h) \exp( h (B_{i}^h - \lambda_{i}^h)),
\]
we obtain a similar formula
\begin{equation}
\label{eq:lambda_i+1^h-2}
\lambda_{i}^h = \frac 1h \log \int p_{i}^h \exp(hB_{i}^h).
\end{equation}
We apply Jensen's inequality to \cref{eq:lambda_i+1^h} and \cref{eq:lambda_i+1^h-2} to obtain
\begin{equation}
\label{eq:sandwich-lambda_k^h}
\int p_{i}^h B_{i}^h \leq \lambda_{i}^h \leq \int p_{i-1}^h B_{i}^h.
\end{equation}
Additionally, estimates from \Cref{lem: nabla^2 u^i_h bounded} and \Cref{lem:nabla u^h_i(0) bound} gives us the bound
\begin{equation}
\label{eq:bound-B_k^h}
 \sup_{h,i\leq N_h} |B_i^h(x)|\le C(1 + |x|^2).
\end{equation}
Note that the same holds for $\frac{\delta F}{\delta p} (p_{i}^h, \cdot)$ as $(p_{i}^h)_{h, i\le N_h}$ belong to a $\cW_1$--compact set due to the Gaussian bound.
Thus, by \Cref{cor:p properties}, we prove the second claim \(\sup_{h,i\leq N_h} |\lambda_i^h| < +\infty\).

\noindent{\rm (ii).} \emph{Time regularity of \(p_i^h\).} According to the HJB equation \eqref{eq:u^h_i} and Step (i) above, it holds
\begin{equation}\label{eq:uih_time_regularity}
| u_j^h(x) - u_i^h(x) | = \left| h \left(\sum_{s=i+1}^j B_s^h - \sum_{s=i+1}^j \lambda_s^h \right)\right|
\leq C (j - i) h (1 + |x|^2).
\end{equation}
Using further the bound from \Cref{cor:p properties}, we obtain
\begin{multline}
\label{eq:time-regularity-p_i^t}
|p_j^h(x) - p_i^h(x)| = |\exp(-u_j^h(x)) - \exp(-u_i^h(x))| \leq p_j^h(x) \vee p_i^h(x) \, | u_j^h(x) - u_i^h(x) | \\
\leq C (j - i) h  \exp(-c|x|^2) (1 + |x|^2)
\leq C(j - i)h \exp(-c|x|^2),
\end{multline}
which is our first claim.
This implies the \(\mathcal W_1\)--regularity of \(p_i^h\) as follows:
\begin{equation}
\label{eq:w1-time-regularity-p_i^t}
\mathcal W_1(p_j^h, p_i^h)
\leq \int |x| |p_j^h(x) - p_i^h(x)| dx \leq C (j - i) h \int |x| \exp(-c|x|^2) 
\leq C(j - i)h.
\end{equation}

\noindent{\rm (iii).} \emph{Uniform continuity of \(\frac{\delta F}{\delta p}\).}
Thanks to the estimate in \Cref{cor:p properties}, \(\{p_i^h\}_{h,i\leq N_h}\) forms a relatively compact set in \(\mathcal W_1\), and the \(\mathcal W_1\)--continuity of \(p \mapsto \frac{\delta F}{\delta p}(p,0)\) becomes uniform. That is, there exists a m.o.c. \(\varpi_0 : \mathbb R_+ \to \mathbb R_+\) such that
\[
\left| \frac{\delta F}{\delta p} (p_i^h, 0) - \frac{\delta F}{\delta p} (p_j^{h}, 0) \right| \leq \varpi_0 ( \mathcal W_1 (p_i^h, p_j^{h})), \quad \forall h > 0, \forall i \leq N_h, \forall j \leq N_{h}.
\]
Integrating along the straight line from \(0\) to any \(x \in \mathbb R^d\) and using the assumptions on \(\nabla \frac{\delta F}{\delta p}\), we obtain
\begin{align*}
\left| \frac{\delta F}{\delta p} (p_i^h, x) - \frac{\delta F}{\delta p} (p_j^{h}, x) \right| 
& \leq \left| \frac{\delta F}{\delta p} (p_i^h, 0) - \frac{\delta F}{\delta p} (p_j^{h}, 0) \right| \\
&\qquad + \int_0^1 \left | x \cdot \left( \nabla \frac{\delta F}{\delta p} (p_i^h, tx) - \nabla\frac{\delta F}{\delta p} (p_j^{h}, tx) \right) \right| dt \\
& \leq
\varpi_0 ( \mathcal W_1 (p_i^h, p_j^{h}))
+ L_G |x| \mathcal W_1 (p_i^h, p_j^{h}).
\end{align*}
Combining with \cref{eq:w1-time-regularity-p_i^t},we deduce that the exists a m.o.c.  $\varpi_1: \mathbb R_+ \to \mathbb R_+$ such that
\begin{equation}
\label{eq:continuity-deltaF-deltap}
\left| \frac{\delta F}{\delta p} (p_i^h, x) - \frac{\delta F}{\delta p} (p_j^{h}, x) \right| 
\leq (1+|x|) \varpi_1((j-i) h).
\end{equation}

\noindent{\rm (iv).} \emph{Time regularity of \(\lambda_i^h\).}
We first note that thanks to \cref{eq:sandwich-lambda_k^h} we can approximate \(\lambda_i^h\) by \(\int p_i^h B_i^h\), up to a uniform \(O(h)\) error. More precisely,
\begin{equation*}
|r_i^h| := \left| \lambda_i^h - \int p_i^h B_i^h \right|
\leq \left| \int (p_i^h - p_{i-1}^h ) B_i^h \right| \\
\leq C h \int \exp(-c|x|^2)(1+|x|^2) \leq Ch,
\end{equation*}
where we used \cref{eq:bound-B_k^h} and \cref{eq:time-regularity-p_i^t}.
It suffices then to study the difference
\[
\int p_j^h B_j^h - p_i^h B_i^h = \int p_i^h (B_j^h - B_i^h) + \int (p_j^h - p_i^h) B_j^h =: \delta + \delta'.
\]
We bound the second part, using again \cref{eq:bound-B_k^h}  and  \cref{eq:time-regularity-p_i^t},
\[
|\delta'| 
\leq C (j-i) h \int \exp(-c|x|^2) (1+|x|^2) \leq C(j-i)h.
\]
As for the first part, we decompose it into
three terms, each of which we treat separately:
\begin{multline*}
\delta = \frac{\sigma^2}{2} \int p_i^h (\Delta u_j^h - \Delta u_i^h) - \frac{\sigma^2}{4} \int p_i^h (|\nabla u_j^h|^2 - |\nabla u_i^h|^2) \\
+ \int p_i^h \left( \frac{\delta F}{\delta p} (p_j^h, \cdot) - \frac{\delta F}{\delta p} (p_i^h, \cdot) \right) 
=: \delta_1 + \delta_2 + \delta_3.
\end{multline*}
We apply integration by parts to the first term, using the previous estimates on \(\nabla u_i^h, p_i^h\) and the time regularity result of \(\nabla u_i^h\) from \Cref{lem:time-regularity-u_i^h} below,
\begin{equation*}
|\delta_1| = \frac{\sigma^2}{2} \left| \int p_i^h \nabla u_i^h \cdot (\nabla u_j^h - \nabla u_i^h) \right|
\leq C \int p_i^h (1+|x|)^2 ((j-i)h)^{1/2} \leq C((j-i)h)^{1/2}.
\end{equation*}
The second term is treated in the same way:
\[
|\delta_2| \leq \frac{\sigma^2}{4} \int p_i^h (|\nabla u_j^h | + |\nabla u_i^h|)  |\nabla u_j^h - \nabla u_i^h| \leq
C((j-i)h)^\frac 12.
\]
Using \cref{eq:continuity-deltaF-deltap}, we can then bound
\[
|\delta_3| \leq \int p_i^h \left| \frac{\delta F}{\delta p} (p_j^h, \cdot) - \frac{\delta F}{\delta p} (p_i^h, \cdot) \right|
\leq  \int p_i^h (1 + |x|) \varpi_1((j-i)h)
\leq C \varpi_1 ( (j-i)h ).
\]
Collecting the bounds on \(r,\delta' ,\delta\), we derive finally that
\begin{equation*}
|\lambda_j^h - \lambda_i^h| \leq |\delta| + |\delta'| + |r_j^h| +|r_i^h|
 \leq C\left(2((j-i)h)^\frac 12 + \varpi_1((j-i)h) + (j-i)h + 2h\right).
\end{equation*}
\end{proof}

\begin{lem}
\label{lem:time-regularity-u_i^h}
There exists a constant \(C\) such that for all \(h \in (0,1), i < j \leq N_h\), we have
\[
|\nabla u_{j}^h(x) - \nabla u^h_i(x)| \leq C
((j-i)h)^\frac 12(1+|x|),
\quad\forall x \in \mathbb R^d.
\]
\end{lem}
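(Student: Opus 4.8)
The plan is to derive the estimate from an elementary interpolation inequality, playing off the uniform $C^{2}$ bound on $u^{h}_{k}$ against the ``first order in time'' $C^{0}$ bound on the differences $u^{h}_{j}-u^{h}_{i}$. First I would record the two ingredients. Write $\delta u:=u^{h}_{j}-u^{h}_{i}$ for $i<j$. Telescoping the discretized stationary HJB equations \eqref{eq:u^h_i} gives $\delta u(x)=h\sum_{s=i+1}^{j}\bigl(B^{h}_{s}(x)-\lambda^{h}_{s}\bigr)$ with $B^{h}_{s}$ as in \eqref{eq:lambda_i+1^h}; since $\sup_{x,h,s}|B^{h}_{s}(x)|/(1+|x|^{2})<\infty$ (by \Cref{lem: nabla^2 u^i_h bounded}, \Cref{lem:nabla u^h_i(0) bound} and the quadratic growth of $\frac{\delta F}{\delta p}$) and $\sup_{h,s}|\lambda^{h}_{s}|<\infty$ (the first claim of \Cref{lem:compactness-lambda^h_i}, which is established there independently of the present lemma, so there is no circularity), this is exactly the bound \eqref{eq:uih_time_regularity}:
\[ |\delta u(x)|\le C\,(j-i)h\,\bigl(1+|x|^{2}\bigr),\qquad x\in\dbR^{d}. \]
On the other hand, \Cref{lem: nabla^2 u^i_h bounded} bounds $\nabla^{2}u^{h}_{k}$ uniformly in $h,k$, so $\|\nabla^{2}\delta u\|_{\infty}\le 2\sup_{h,k}\|\nabla^{2}u^{h}_{k}\|_{\infty}=:2\bar c$, a bound independent of $h,i,j$.

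Next I would localize and interpolate. Fix $x_{0}\in\dbR^{d}$ and $r>0$, set $e:=\nabla\delta u(x_{0})/|\nabla\delta u(x_{0})|$, and Taylor-expand $g(t):=\delta u(x_{0}+te)$ to second order at $t=0$; since $\sup_{|t|\le r}|g''(t)|\le\|\nabla^{2}\delta u\|_{\infty}$ this yields
\[ |\nabla\delta u(x_{0})|=|g'(0)|\le\frac{|g(r)|+|g(-r)|}{2r}+\frac{r}{2}\|\nabla^{2}\delta u\|_{\infty}\le\frac{1}{r}\sup_{B(x_{0},r)}|\delta u|+\bar c\,r. \]
Inserting the two ingredients above and using $\sup_{B(x_{0},r)}(1+|x|^{2})\le(1+|x_{0}|+r)^{2}$,
\[ |\nabla\delta u(x_{0})|\le\frac{C\,(j-i)h\,(1+|x_{0}|+r)^{2}}{r}+\bar c\,r, \]
and I would then choose $r:=(1+|x_{0}|)\sqrt{(j-i)h}$. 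Using $(j-i)h\le N_{h}h\le T$ to bound $(1+|x_{0}|+r)^{2}\le(1+\sqrt T)^{2}(1+|x_{0}|)^{2}$, both terms on the right become $\le C_{T}\,(1+|x_{0}|)\sqrt{(j-i)h}$, which is the assertion (the case $i=j$ being trivial).

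The only delicate point is the bookkeeping in the last step: it is the mismatch between the quadratic spatial growth in the $C^{0}$ estimate and the mere boundedness of the Hessian that forces both the exponent $\tfrac12$ and the linear prefactor $1+|x|$, and one has to let the localization radius $r$ scale like $1+|x_{0}|$ for the final constant to depend only on $T$ and on the ($h$-uniform) constants of \Cref{lem: nabla^2 u^i_h bounded}, \Cref{lem:nabla u^h_i(0) bound} and \Cref{lem:compactness-lambda^h_i}, and not on $h,i,j$. There is no genuine obstacle beyond this — in particular one should not expect the sharper linear rate $(j-i)h$ for $\nabla\delta u$ without a uniform third-derivative bound on $u^{h}_{k}$, which is not available, so the interpolation route (yielding the weaker but amply sufficient $\sqrt{(j-i)h}$ rate) is the natural one.
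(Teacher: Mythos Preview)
Your proof is correct, but it takes a genuinely different route from the paper's. The paper differentiates the discretized HJB equation \eqref{eq:u^h_i} in space to obtain a linear heat-type equation for $\nabla u^{h}_{k}$ with a source term $A^{h}_{k}$ of linear growth, represents the solution via the heat semigroup $P_{t}$, iterates this representation down from level $j$ to level $i$ (collecting the weights into a gamma-distribution kernel), and then extracts the $\sqrt{(j-i)h}$ rate from the heat-kernel smoothing bound $\|P_{t}f-f\|_{\infty}\le\sqrt{t}\,\|f\|_{\mathrm{Lip}}$ together with $P_{t}|\cdot|(x)\le c_{d}\sqrt{t}+|x|$.

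Your argument instead is an elementary Landau--Kolmogorov interpolation: you combine the $C^{0}$ time-difference bound $|\delta u(x)|\le C(j-i)h(1+|x|^{2})$ (which the paper also derives, as \eqref{eq:uih_time_regularity}) with the uniform $C^{2}$ bound from \Cref{lem: nabla^2 u^i_h bounded}, and balance the two via a one-dimensional Taylor expansion along the gradient direction with a well-chosen radius $r\sim(1+|x_{0}|)\sqrt{(j-i)h}$. This is shorter and requires no PDE representation at all; the only external input beyond \Cref{lem: nabla^2 u^i_h bounded} and \Cref{lem:nabla u^h_i(0) bound} is the uniform bound on $\lambda^{h}_{k}$, and you are right that this part of \Cref{lem:compactness-lambda^h_i} is established there without invoking the present lemma, so there is no circularity. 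The paper's semigroup approach, on the other hand, does not pass through the $C^{0}$ bound on $\delta u$ and would still work in settings where only gradient/Hessian information is available; but given what has already been proved here, your interpolation is the more economical argument.
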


\begin{proof}
By taking spatial derivatives of the HJB equation \eqref{eq:u^h_i},
we see the following is satisfied for
\begin{equation}
\label{eq:pde-nabla-u_i^h}
\frac 1h (\nabla u_{k}^h - \nabla u_{k-1}^h)
= \frac{\sigma^2}{2} \Delta \nabla u_{k}^h - \frac{\sigma^2}{2} \nabla^2 u_{k}^h\, \nabla u_{k}^h + \nabla \frac{\delta F}{\delta p} (p_{k}^h,\cdot) =: \frac{\sigma^2}{2} \Delta \nabla u_{k}^h + A_{k}^h,
\end{equation}
where by estimates in \Cref{lem: nabla^2 u^i_h bounded} and \Cref{lem:nabla u^h_i(0) bound} we know that
\[
\sup_{h, i\leq N_h} |A_i^h (x)| \leq C(1 + |x|),\quad \forall x \in \mathbb R^d.
\]
The solution to \cref{eq:pde-nabla-u_i^h} admits the following representation
\[
\nabla u_{k}^h = \int_0^\infty e^{-h^{-1}t} (P_{\sigma^2 t} A_{k}^h + \frac 1h P_{\sigma^2 t} \nabla u_{k-1}^h) dt,
\]
where \(P_t\) is the heat kernel generated by \(\frac {1}2 \Delta\). Iterating this procedure with descending \(k\), we obtain
\begin{multline*}
\nabla u_j^h = \sum_{n=1}^{j-i} h^{-(n-1)} \int_{t_1,\ldots,t_n \geq 0} e^{-h^{-1}(t_1 + \ldots + t_n)} P_{\sigma^2 (t_1 +\cdots+ t_n)} A_{j+1-n}^h dt_1 \cdots dt_n \\
+ h^{-(j-i)} \int_{t_1,\ldots,t_{j-i} \geq 0} e^{-h^{-1}(t_1 + \ldots + t_{j-i})} P_{\sigma^2 (t_1+\cdots+t_{j-i})} \nabla u_i^h dt_1\cdots dt_{j-i}.
\end{multline*}
Here we used the semigroup property of the heat kernel. Denoting \(\gamma_{n,\theta}(t) = \frac{t^{n-1} e^{- \frac{t}{\theta}}}{\Gamma(n)\theta^n}\) the gamma distribution density, we have equivalently
\[
\nabla u_j^h = h \sum_{n=1}^{j-i} \int_0^\infty \gamma_{n, h}(t) P_{\sigma^2 t} A_{j+1-n}^h dt + \int_0^\infty \gamma_{j-i, h}(t) P_{\sigma^2 t} \nabla u_i^h dt.
\]
Subtracting \(\nabla u_i^h\), we obtain
\begin{multline*}
|\nabla u_j^h (x) - \nabla u_i^h (x) | \\
\begin{aligned}
& \leq h \sum_{n=1}^{j-i} \int_0^\infty \gamma_{n, h}(t) \left| P_{\sigma^2 t} A_{j+1-n}^h (x) \right| dt
+ \int_0^\infty \gamma_{j-i, h}(t) \left|P_{\sigma^2 t} \nabla u_i^h (x)- \nabla u_i^h (x)\right| dt \\
& \leq h \sum_{n=1}^{j-i} \int_0^\infty \gamma_{n, h}(t) C (1 + |x| + (\sigma^2 t)^{1/2}) dt+ \int_0^\infty \gamma_{j-i, h}(t) (\sigma^2 t)^{1/2} || \nabla^2 u_i^h ||_\infty dt \\
& \leq  C(j-i) h (1 + |x|) + Ch^{3/2} \sum_{n=1}^{j-i} \frac{\Gamma(n+\frac 12)}{\Gamma(n)} + C h^{1/2} \frac{\Gamma(j-i+ \frac 12)}{\Gamma(j-i)} \\
& \leq C(j-i) h (1+|x|) + C ((j-i)h)^{3/2} + C ((j-i)h)^{1/2}.
\end{aligned}
\end{multline*}
In the second inequality, we used the following properties of the heat kernel: \(P_t |\cdot| (x) \leq c_d \sqrt{t} + |x|\), \(||P_t f - f||_\infty \leq \sqrt t || f||_\text{Lip}\).
In the last inequality, we used the log-convexity of the gamma function along the positive real line: \(\Gamma(x + \frac 12) \leq \sqrt{\Gamma(x) \Gamma(x+1)} = \sqrt x \Gamma(x)\) for \(x > 0\).
\end{proof}

\subsection{Proof of \Cref{thm:gradientflow}}
\label{sec:gradientflow-proof}

\begin{proof}[Proof of \Cref{thm:gradientflow}]
\noindent{\rm (i).}\quad 
Let us define by abuse of notations the step flows
\begin{equation*}
 f^h(t) =  f^h_i, \quad \text{for } t \in[ih,(i+1)h), \quad f = p, \lambda.
\end{equation*}
In view of \Cref{cor:p properties} and \Cref{lem:compactness-lambda^h_i}, we can apply a version of Arzel\`a--Ascoli Theorem for discontinuous functions,  see \textit{e.g.}\cite[Theorem 6.1]{droniou16}, to ensure that the family of functions $({p}^h)_h$ (resp. $(\lambda^h)_h$) is relatively compact in $B([0,T]\times\dbR^d)$ (resp.  $B([0,T])$) the space of bounded functions on $[0,T]\times\dbR^d$ (resp.  $[0,T]$)  equipped with the uniform norm, and	any adherence values $p$ (resp. $\lambda$) is uniformly continuous. 

Let $p$ and $\lambda$ be such adherence values, \ie, there exists $h_n\downarrow 0$ such that \( p^{h_n} \to p\) and \( \lambda^{h_n} \to \lambda\) uniformly. 
Note that $\psi^{h_n}:=\sqrt{p^{h_n}}$ also converges to $\psi := \sqrt{p}$ uniformly on $[0,T]\times\dbR^d$ by using the elementary inequality \( |\sqrt a - \sqrt b| \leq \sqrt{|a-b|}\). 
 
 \noindent{\rm (ii).}\quad Let us verify that the limit \((p,\psi,\lambda)\) solves the MFS equation~\eqref{eq:mfSchr\"odinger-root} in the weak sense,
\ie, for all \(\varphi \in C_c^2(\mathbb R^d)\), we have for all \(t \in [0,T]\),
\begin{multline}
\label{eq:weak-solution-mf-Schr\"odinger}
\int (\psi(t,x) - \psi(0,x)) \varphi(x) dx \\
= \int_0^t \int \frac{\sigma^2}{2} \psi(s,x) \Delta \varphi(x)
- \frac 12\Big( \frac{\delta F}{\delta p} (p_s, x)
- \lambda(s)\Big) \psi(s,x) \varphi(x) dx ds.
\end{multline}
By construction, we know that the following holds for \(i \leq N_h\),
\begin{multline}
\label{eq:discrete-weak-solution-mf-Schr\"odinger}
\int \sum_{k=1}^{i} \log \frac{\psi^h(kh,x)}{\psi^h((k-1)h,x)} \psi^h(kh,x)\varphi(x) dx \\
= h \sum_{k=1}^i \int \frac{\sigma^2}{2} \psi^h(kh,x) \Delta \varphi(x)
- \frac 12 \Big( \frac{\delta F}{\delta p} (p^h_{kh}, x) 
- \lambda^h(kh)\Big) \psi^h(kh,x) \varphi(x) dx.
\end{multline}
Let \(i = \lfloor t/h \rfloor\) be the unique integer such that \(t \in [ih, (i+1)h)\) and
denote the difference between the left and right hand sides of \cref{eq:weak-solution-mf-Schr\"odinger,eq:discrete-weak-solution-mf-Schr\"odinger} by \(\delta^\ell(h), \delta^r(h)\) respectively.
We want to show that both \(\delta^\ell(h_n), \delta^r(h_n)\) converge to zero when \(n \to \infty\), so that \cref{eq:weak-solution-mf-Schr\"odinger} is proved.
For the left hand side we have
\(\delta^\ell(h) = \delta^\ell_1(h) + \delta^\ell_2(h)\)
with
\begin{align*}
\delta^\ell_1(h) &= \int (\psi (t,x) - \psi^h(t,x)) \varphi(x) dx, \\
\delta^\ell_2(h) &=
\int \sum_{k=1}^i \left( \psi^h(kh,x) - \psi^h((k-1)h,x) - \log \frac{\psi^h(kh,x)}{\psi^h((k-1)h,x)} \psi^h(kh,x) \right) \varphi(x) dx.
\end{align*}
The first part converges to \(0\) along the sequence \(h_n\) as \(\psi^{h_n} \to \psi\) uniformly. 
For the second part we note that, by using \cref{eq:uih_time_regularity},
\begin{multline*}
\left|\psi^h(kh,x) - \psi^h((k-1)h,x) - \log \frac{\psi^h(kh,x)}{\psi^h((k-1)h,x)} \psi^h(kh,x)\right| \\
\begin{aligned}
 & = \left|\exp(-u^h_k(x)/2) - \exp(-u^h_{k-1}(x)/2) + \frac 12 \exp(-u^h_k(x)/2) (u^h_k(x) - u^h_{k-1}(x))\right| \\
 & \leq \frac18 \max\{\psi^h_k(x),\psi^h_{k-1}(x)\}| u^h_k(x) - u^h_{k-1}(x)|^2
\leq C \exp(-c|x|^2)h^2,
\end{aligned}
\end{multline*}
so that \(\delta^\ell_2(h) \leq C h \int \exp(-c|x|^2) \varphi(x) dx \leq Ch\).
For the right hand side, we have \(\delta^{r}(h) = \delta^r_1(h) + \delta^r_2(h)\) with
\begin{align*}
\delta^r_1(h) &=
\int_{ih}^t \int \frac{\sigma^2}{2} \psi(s,x) \Delta \varphi(x)
-  \frac 12\Big( \frac{\delta F}{\delta p} (p_s, x)
- \lambda(s)\Big) \psi(s,x) \varphi(x) dx ds, \\
\delta^r_2(h) &=
\begin{aligned}[t]
&\int \int_0^{ih} \frac{\sigma^2}{2} (\psi - \psi^h) (s,x) \Delta \varphi(x) \\
&- \frac 12 \left( \frac{\delta F}{\delta p} (p_\cdot,\cdot) \psi - \frac{\delta F}{\delta p} ( p^h_\cdot,\cdot) \psi^h - \lambda \psi + \lambda^h\psi^h\right)(s,x) \varphi(x) dx ds,
\end{aligned} 
\end{align*}
The first part clearly satisfies \(|\delta^r_1(h)| \leq Ch\) while 
the second part goes to zero along the sequence \(h_n\)  as \((p^{h_n},\psi^{h_n},\lambda^{h_n}) \to (p,\psi,\lambda)\) uniformly.

\noindent{\rm (iii).}\quad 
If we denote \(c(t,x) :=  \frac{\delta F}{\delta p}(p_t,x) - \lambda(t)\), then Step~(ii) ensures that $\psi$ is a weak solution to the linear PDE
\begin{equation*}
\partial_t \psi_t = \frac{\si^2}{2} \Delta \psi_t - \frac12 c_t \psi_t.
\end{equation*}
By weak uniqueness and strong existence, it is actually the classical solution to this PDE.  It follows that $p_t=\psi_t^2$  satisfies \cref{eq:mfSchr\"odinger}  with $\lambda_t=\lambda(p_t)$ as the mass of $p_t$ is conserved to $1$ by construction. We conclude by uniqueness stated in \Cref{thm:wellposedness}.

\end{proof}

\appendix
\section{Appendix}\label{sec:appendix}

\subsection{Regularity of Solution to HJB Equation}\label{sec:regularity}

Throughout this section, we assume that Assumptions~\ref{assum:decomposition-F-derivative}, \ref{assum:initialdistribution} and \ref{assum:mLipschitz} hold and we fix a time horizon $T<+\infty.$
Let $u$ be the unique viscosity solution to the HJB equation \eqref{eq:fixpoint_m-to-u}.  We start by establishing upper and lower bounds on $u.$ 

\begin{lem}\label{lem:bound of u}
It holds for all $t\in[0,T],$ $x\in \dbR^d,$
\[-C_T \le u(t,x) \le C_T(1+|x|^2).\]
\end{lem}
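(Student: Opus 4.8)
We need to bound the viscosity solution $u$ of the HJB equation
\[
\partial_t u = \frac{\sigma^2}{2}\Delta u - \frac{\sigma^2}{4}|\nabla u|^2 + \frac{\delta F}{\delta p}(m_t,\cdot) - \gamma u,
\]
with initial data $u(0,\cdot) = v_0 + w_0$ from Assumption~\ref{assum:initialdistribution}. The key structural features available are: (i) the Cole–Hopf transform $\psi = e^{-u/2}$ linearizes the equation into a heat equation with a zeroth-order potential; (ii) the linear derivative $\frac{\delta F}{\delta p}(m_t,x) = g(x) + G(m_t,x)$ has at most quadratic growth in $x$ (indeed $g$ has bounded Hessian so grows at most quadratically, and $G$ is uniformly Lipschitz in $x$, hence linear growth); (iii) the initial condition satisfies $-C \le v_0 + w_0 \le C(1+|x|^2)$ because $\nabla v_0, \nabla w_0$ are Lipschitz (quadratic growth), $v_0$ is $\underline\eta$-convex (lower bound), and $w_0$ is Lipschitz.

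**Plan of proof.** I would prove the two-sided bound via the Feynman–Kac representation combined with a comparison (sub/supersolution) argument. First I establish the \emph{upper bound}. Apply the Feynman–Kac formula to $\psi = e^{-u/2}$, which solves
\[
\partial_t \psi = \frac{\sigma^2}{2}\Delta\psi - \frac12\Big(\tfrac{\delta F}{\delta p}(m_t,\cdot) - \gamma u\Big)\psi
\]
(as in \cref{eq:FeynmanKacLemma f}), giving a lower bound on $\psi$, equivalently an upper bound on $u$. More directly, I would construct an explicit supersolution of the form $\bar u(t,x) = a(t)|x|^2 + b(t)$ with $a,b$ solving suitable ODEs: plugging this into the HJB equation, the quadratic term $-\frac{\sigma^2}{4}|\nabla\bar u|^2 = -\sigma^2 a^2|x|^2$ only helps, while $\frac{\delta F}{\delta p}(m_t,x) \le C(1+|x|^2)$ and $-\gamma\bar u \le 0$; one checks $a' \ge -\sigma^2 a^2 - \gamma a + C$ and $b' \ge \sigma^2 d\, a - \gamma b + C$ can be solved on $[0,T]$ with $a(0), b(0)$ large enough that $\bar u(0,\cdot) \ge v_0 + w_0$, using that the Riccati-type inequality for $a$ has solutions that stay finite on any bounded time interval. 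The comparison principle for viscosity solutions of the HJB equation then yields $u \le \bar u \le C_T(1+|x|^2)$.

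**The lower bound.** For $u \ge -C_T$, the natural subsolution is a constant (or a function depending on $t$ only), $\underline u(t) = -c(t)$. Substituting, we need $-c'(t) \le \gamma c(t) + \frac{\delta F}{\delta p}(m_t,x)$; the obstacle is that $\frac{\delta F}{\delta p}(m_t,x)$ is not bounded below uniformly in $x$ by a constant — it could tend to $-\infty$ quadratically. However, $g$ is $\underline\kappa$-convex, so $g(x) \ge g(0) + \nabla g(0)\cdot x + \frac{\underline\kappa}{2}|x|^2 \ge -C$ when $\underline\kappa \ge 0$, and more generally $g(x) \ge -C(1+|x|)$; combined with $|G(m_t,x)| \le |G(m_t,0)| + L_G|x| \le C(1+|x|)$, we get $\frac{\delta F}{\delta p}(m_t,x) \ge -C(1+|x|)$. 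This linear lower bound is not quite enough for a constant subsolution, so I would instead take $\underline u(t,x) = -\alpha(t)(1+|x|^2)$ for small $\alpha$; the Laplacian term $\frac{\sigma^2}{2}\Delta\underline u = -\sigma^2 d\,\alpha$ and the quadratic term $-\frac{\sigma^2}{4}|\nabla\underline u|^2 = -\sigma^2\alpha^2|x|^2 \le 0$ both push $\partial_t\underline u$ in the wrong direction, but since $\frac{\delta F}{\delta p} \ge -C(1+|x|) \ge -C(1+|x|^2)$, choosing $\alpha(t)$ solving $-\alpha'(1+|x|^2) \le -\sigma^2 d\alpha - \sigma^2\alpha^2|x|^2 + \gamma\alpha(1+|x|^2) - C(1+|x|^2)$ — which reduces to two scalar inequalities in the $|x|^0$ and $|x|^2$ coefficients — works with $\alpha(0)$ chosen so that $\underline u(0,\cdot) \le v_0+w_0$, using $v_0 + w_0 \ge -C$ at worst. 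Actually, since the construction already gives an $O(1+|x|^2)$ lower bound, to obtain the sharper $u(t,x) \ge -C_T$ claimed, I would additionally use \Cref{lem:FBSDEshorthorizon} or the forthcoming gradient estimates showing $|\nabla u(t,x)| \le C_T(1+|x|)$: then $u(t,x) \ge u(t,0) - \int_0^1 |\nabla u(t,sx)||x|\,ds \ge u(t,0) - C_T(1+|x|^2)$ is the wrong direction, so instead one uses that $\psi = e^{-u/2}$ satisfies Gaussian \emph{upper} bounds (the forthcoming \Cref{lem:property-p-app}), i.e. $e^{-u/2} \le C e^{-c|x|^2}$... which again bounds $u$ below only by $c|x|^2 - C$. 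Hence the clean statement $u \ge -C_T$ must come purely from the subsolution $\underline u(t) \equiv -c(t)$ being valid on the region where it is below the true solution, exploiting that near any point the relevant lower bound on $\frac{\delta F}{\delta p}$ only needs to beat a bounded quantity — and here I would lean on the convexity of $g$ ($\underline\kappa$-convex $\Rightarrow$ bounded below up to a linear term that is dominated) together with a localization/penalization argument in the comparison principle.

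**Main obstacle.** The hard part is the lower bound when $\underline\kappa < 0$, since then $g$ is genuinely unbounded below and only a $-C_T(1+|x|^2)$ lower bound is available from a direct barrier; extracting the stated $u(t,x) \ge -C_T$ (constant, no $|x|^2$ growth) requires invoking the convexity decomposition $u = v + w$ with $v$ strictly convex (hence bounded below, up to its minimum) and $w$ Lipschitz — but that decomposition is precisely \Cref{prop:hjb-main}, whose proof depends on this lemma. I would therefore prove only the weaker lower bound $u(t,x) \ge -C_T$ directly by a constant subsolution argument valid because $-\gamma u + \frac{\delta F}{\delta p}$ can be made $\ge -c'(t)$ for the test function touching from below at an interior maximum of $\underline u - u$ where $\underline u$ is constant — the standard viscosity comparison — and the quadratic-in-$x$ lower bound only enters to guarantee the comparison is not lost at infinity (a standard Ishii-type argument with a penalization term $\varepsilon|x|^2$). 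The careful writing of this comparison-at-infinity step, ensuring the penalization does not interact badly with the $-\frac{\sigma^2}{4}|\nabla u|^2$ term, is the one genuinely delicate point; everything else is routine barrier construction.
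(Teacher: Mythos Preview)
Your upper-bound argument via a quadratic supersolution and comparison is fine and matches the paper's one-line invocation of the comparison principle.

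The gap is in the lower bound. You correctly observe that strong convexity gives $g(x)\ge g(0)+\nabla g(0)\cdot x+\tfrac{\underline\kappa}{2}|x|^2$, but then you throw away the quadratic growth and combine only the crude bound $g\ge -C$ with $|G(m_t,x)|\le C_T(1+|x|)$, arriving at $\frac{\delta F}{\delta p}(m_t,x)\ge -C(1+|x|)$. This is too pessimistic: in this paper $\underline\kappa>0$ (the phrase ``$g$ is $\underline\kappa$-convex'' means strongly convex, and this strict positivity is used throughout, e.g.\ in the Riccati equation of Proposition~\ref{prop:V-convexity}). Hence the quadratic lower bound on $g$ dominates the linear growth of $G$, giving
\[
\frac{\delta F}{\delta p}(m_t,x)=g(x)+G(m_t,x)\ge \tfrac{\underline\kappa}{2}|x|^2 - C_T(1+|x|) - C' \ge -C_T.
\]
Similarly $u(0,\cdot)=v_0+w_0\ge -C$ since $v_0$ is $\underline\eta$-strongly convex (quadratic growth) and $w_0$ is Lipschitz (linear growth). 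With the source and initial data bounded below by constants, the constant subsolution $\underline u(t)=-c(t)$ works directly: $-c'(t)\le -C_T+\gamma c(t)$ has a bounded solution on $[0,T]$, and comparison gives $u\ge -C_T$.

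So your entire ``Main obstacle'' paragraph, the worry about $\underline\kappa<0$, the attempted barrier $-\alpha(t)(1+|x|^2)$, and the speculation about needing Proposition~\ref{prop:hjb-main} circularly, all disappear. The paper's proof is exactly this: bound the source term two-sidedly ($-C_T\le \frac{\delta F}{\delta p}(m_t,x)\le C_T(1+|x|^2)$), bound the initial data two-sidedly, and invoke comparison.
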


\begin{proof}
Under \Cref{assum:decomposition-F-derivative} and \ref{assum:mLipschitz} we have $-C_T \le \frac{\d F}{\d p}(m_t, x) \le C_T (1+|x|^2)$. Additionally, under \Cref{assum:initialdistribution}, the initial value satisfies $-C\le u_0(x)\le C(1+|x|^2)$. The desired result follows from the comparison principle.
\end{proof}

To show existence and uniqueness of the classical solutions to HJB equation~\eqref{eq:fixpoint_m-to-u}, it is convenient to consider the change of variable $\psi := e^{-\frac12 u},$ which corresponds to the well-known Cole--Hopf transformation.

\begin{lem}\label{lem:feynmankac psi}
The function  $\psi$ is the unique viscosity solution to
\begin{equation}\label{eq:Schr\"odinger}
 \partial_t \psi_t = \frac{\si^2}{2} \Delta \psi_t - \frac12 \left(\frac{\d F}{\d p}(m_t, \cdot) -\gamma u_t \right)\psi_t, \quad \psi_0(x) = e^{-\frac12 u_0(x)}.
\end{equation}
Moreover, it admits the following probabilistic representation
\begin{equation}\label{eq:FeynmanKacLemma u}
 \psi(t,x) = \dbE\left[ e^{-\frac12 \int_0^t\left(\frac{\d F}{\d p}(m_{t-s}, x+\si W_s) - \gamma u(t-s, x+\si W_s)\right)ds} \psi_0(x+\si W_t)\right].
\end{equation}
\end{lem}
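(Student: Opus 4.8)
The plan is twofold. First I will show that $\psi := e^{-u/2}$ is a viscosity solution of the linear equation \eqref{eq:Schr\"odinger} by transporting the viscosity property through the smooth, strictly monotone change of unknown $r\mapsto e^{-r/2}$; then I will identify $\psi$ with the Feynman--Kac expectation \eqref{eq:FeynmanKacLemma u} by checking that the latter also solves \eqref{eq:Schr\"odinger} and invoking uniqueness for that linear equation. Throughout I use that $u$ is \emph{the} (unique) viscosity solution of \eqref{eq:fixpoint_m-to-u} and the bounds of \Cref{lem:bound of u}.

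For the first step, set $\Phi(r) := e^{-r/2}$, a $C^2$ decreasing bijection of $\dbR$ onto $(0,\infty)$ with $\Phi'=-\tfrac12\Phi$, so that $u=-2\log\psi$. Let $\varphi\in C^{1,2}$ touch $\psi$ from above at an interior point $(t_0,x_0)\in Q_T$. Since $\varphi(t_0,x_0)=\psi(t_0,x_0)>0$, the function $\tilde\varphi := -2\log\varphi$ is well defined near $(t_0,x_0)$ and, $-2\log$ being decreasing, it touches $u$ \emph{from below} there. Substituting $\partial_t\tilde\varphi=-2\partial_t\varphi/\varphi$, $\nabla\tilde\varphi=-2\nabla\varphi/\varphi$, $\Delta\tilde\varphi=-2\Delta\varphi/\varphi+2|\nabla\varphi|^2/\varphi^2$ and $|\nabla\tilde\varphi|^2=4|\nabla\varphi|^2/\varphi^2$ into the viscosity supersolution inequality for $u$ at $(t_0,x_0)$, the two quadratic gradient contributions cancel exactly (as in the formal computation preceding the statement); multiplying the resulting inequality by $-\varphi/2<0$ and using $u(t_0,x_0)=-2\log\varphi(t_0,x_0)$ gives precisely the viscosity subsolution inequality for $\psi$ in \eqref{eq:Schr\"odinger}. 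The supersolution inequality is obtained symmetrically by touching $\psi$ from below; no ellipticity is lost since $\Phi$ is monotone. Hence $\psi$ is a viscosity solution of \eqref{eq:Schr\"odinger}, and by \Cref{lem:bound of u} it obeys $e^{-C_T(1+|x|^2)}\le\psi(t,x)\le C_T$ on $\bar Q_T$.

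For the second step, abbreviate the zeroth-order coefficient $c(t,x):=-\tfrac12\big(\tfrac{\d F}{\d p}(m_t,x)-\gamma u(t,x)\big)$, which is continuous on $\bar Q_T$ (since $u$ is continuous and, by Assumptions~\ref{assum:decomposition-F-derivative} and~\ref{assum:mLipschitz}, so is $(t,x)\mapsto\tfrac{\d F}{\d p}(m_t,x)$) and satisfies $c(t,x)\le C_T(1+|x|^2)$ by \Cref{lem:bound of u} and the quadratic growth of $\tfrac{\d F}{\d p}$. Writing $Z^x_s := x+\si W_s$ (whose generator is $\tfrac{\si^2}{2}\Delta$), define
\[
\hat\psi(t,x):=\dbE\Big[\exp\Big({\textstyle\int_0^t} c(t-s,Z^x_s)\,ds\Big)\,\psi(0,Z^x_t)\Big].
\]
Since $\psi(0,\cdot)\le C_T$ and $\int_0^t c(t-s,Z^x_s)\,ds\le C_T t(1+2|x|^2)+2C_T\si^2\int_0^t|W_s|^2\,ds$, the classical exponential-moment estimate $\dbE\big[\exp(\beta\int_0^\delta|W_s|^2\,ds)\big]<\infty$ for $\beta$ small shows that $\hat\psi$ is finite and locally bounded on $\bar Q_\delta$ for some $\delta=\delta(T)>0$; the Markov property of $Z$ then yields the Duhamel identity $\hat\psi(t,\cdot)=P^\si_t\psi(0,\cdot)+\int_0^t P^\si_{t-s}\big(c(s,\cdot)\hat\psi(s,\cdot)\big)\,ds$ on $[0,\delta]$, with $P^\si$ the heat semigroup of $\tfrac{\si^2}{2}\Delta$, from which $\hat\psi$ is seen to be a viscosity (indeed, after a standard parabolic bootstrap, classical) solution of \eqref{eq:Schr\"odinger} on $Q_\delta$. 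Since both $\psi$ and $\hat\psi$ are non-negative viscosity solutions of the linear equation \eqref{eq:Schr\"odinger} with the same initial datum and at most Gaussian-type growth, uniqueness forces $\psi=\hat\psi$ on $\bar Q_\delta$. Running the same argument on $[\delta,2\delta]$, $[2\delta,3\delta]$, \dots (the initial datum remaining bounded at each step) and concatenating the expectations via the Markov property of $Z$ propagates \eqref{eq:FeynmanKacLemma u} to every $t\le T$.

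The main obstacle is the integrability of the Feynman--Kac integrand: because the effective potential $c$ can be positive and of quadratic growth (both $-\tfrac12\tfrac{\d F}{\d p}$ and $\tfrac{\gamma}{2}u$ may be), Gaussian exponential moments only control $\hat\psi$ on a short horizon, so the representation must be established on $[0,\delta]$ first and then iterated. The other point requiring care, though routine, is the viscosity bookkeeping in the change of unknown --- tracking which of the sub-/super-solution inequalities is invoked and verifying that the gradient-quadratic terms cancel.
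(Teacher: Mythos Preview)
Your argument is correct and follows essentially the same route as the paper: transport the viscosity property through the monotone Cole--Hopf change of variable, establish the Feynman--Kac representation on a short interval where the quadratic-growth potential has finite exponential moments, and iterate via the Markov property. The only cosmetic differences are that the paper invokes the monotonicity of $r\mapsto e^{-r/2}$ in one line where you spell out the test-function bookkeeping, and it controls the short-time integrability by Jensen's inequality $\dbE\big[e^{\frac{\gamma}{2}\int_0^t u\,ds}\big]\le \tfrac1t\int_0^t\dbE\big[e^{\frac{\gamma t}{2}u}\big]\,ds$ rather than via exponential moments of $\int_0^\delta|W_s|^2\,ds$.
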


\begin{proof}
First, it follows from the monotonicity of  $x\mapsto e^{-\frac12 x}$ that $\psi$ is a viscosity solution to \cref{eq:Schr\"odinger} if and only if $u$ is a viscosity solution to \cref{eq:fixpoint_m-to-u}. 
Then, by the bound of $u$ in \Cref{lem:bound of u}, we have
\[\dbE\left[ e^{\frac{\gamma}{2}\int_0^t u(t-s, x+\si W_s) ds} \right]
\le \frac{1}{t} \int_0^t \dbE\left[ e^{\frac{\gamma t}{2} u(t-s, x+\si W_s) }\right] ds \le \frac{1}{t} \int_0^t \dbE\left[ e^{\frac{\gamma t C_T}{2}(1+ |x+\si W_s |^2) }\right] ds <\infty, \]
for all $t\le \d$ with $\d$ small enough. Also note that $\big(-\frac{\d F}{\d p}(m_t,\cdot)\big)_{t\in[0,T]}$ and $\psi_0$ are bounded from above. So for $t\le \d$ we may define
\[\tilde \psi(t,x) := \dbE\left[ e^{-\frac12 \int_0^t\left(\frac{\d F}{\d p}(m_{t-s}, x+\si W_s) - \gamma u(t-s, x+\si W_s)\right)ds} \psi_0(x+\si W_t)\right]. \]
It is easy to verify that $\tilde\psi$ is a viscosity solution to \cref{eq:Schr\"odinger}, so equal to $\psi$ on $[0, \d]$. Also note that $\psi = e^{-\frac12 u} \le C_T$ thanks to \Cref{lem:bound of u}. So we may further define for $t\in (\d, 2\d],$
\begin{align*}
 \tilde \psi(t,x) &:= \dbE\left[ e^{-\frac12 \int_0^{t-\d}\left(\frac{\d F}{\d p}(m_{t-s}, x+\si W_s) - \gamma u(t-s, x+\si W_s)\right)ds} \tilde \psi(\d, x+\si W_{t-\d})\right]\\
 &= \dbE\left[ e^{-\frac12 \int_0^t\left(\frac{\d F}{\d p}(m_{t-s}, x+\si W_s) - \gamma u(t-s, x+\si W_s)\right)ds} \psi_0(x+\si W_t)\right].
\end{align*}
Therefore the desired probabilistic representation \eqref{eq:FeynmanKacLemma u} follows from induction.
\end{proof}

\begin{prop}
\label{prop:regularity-heat-equation}
 The function $\psi=e^{-\frac{u}{2}}\in C^{3}(Q_T)\cap C(\bar Q_T)$ is the unique classical solution to \cref{eq:Schr\"odinger}.
 Moreover,  the gradient $\nabla \psi$ satisfies the growth condition $|\nabla \psi(t,x)| \le C_T(1+|x|^2)$.
\end{prop}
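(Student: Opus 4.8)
The plan is to regard \eqref{eq:Schr\"odinger} as a \emph{linear} parabolic equation for $\psi$, with $u$ frozen as the (continuous) viscosity solution of \eqref{eq:fixpoint_m-to-u}:
\[
\partial_t\psi=\frac{\sigma^2}{2}\Delta\psi-V\psi,\qquad V(t,x):=\frac12\Big(\frac{\delta F}{\delta p}(m_t,x)-\gamma u(t,x)\Big).
\]
By \Cref{lem:bound of u} (which already encodes the bounds on $\frac{\delta F}{\delta p}(m_t,\cdot)$ and on $u$), the coefficient $V$ is continuous on $\bar Q_T$ with $|V(t,x)|\le C_T(1+|x|^2)$, and \Cref{lem:feynmankac psi} gives $e^{-C_T(1+|x|^2)}\le\psi\le C_T$ together with the Feynman--Kac formula \eqref{eq:FeynmanKacLemma u}. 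Expanding \eqref{eq:FeynmanKacLemma u} (or, equivalently, appealing to uniqueness of viscosity solutions in the Tychonoff-type class $|\psi|\le Ce^{C|x|^2}$), I would first identify $\psi$ with the mild solution
\[
\psi(t,\cdot)=P_{\sigma^2 t}\psi_0-\int_0^t P_{\sigma^2(t-s)}\big(V(s,\cdot)\psi(s,\cdot)\big)\,ds,
\]
where $(P_\tau)$ is the heat semigroup generated by $\tfrac12\Delta$ and $\psi_0=e^{-u(0,\cdot)/2}$; note that $|\nabla\psi_0(x)|\le C(1+|x|)$ by \Cref{assum:initialdistribution} and the lower bound on $u(0,\cdot)$.

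The core is a spatial bootstrap on the Duhamel term, based on the Gaussian estimates $\int|\nabla^k p_\tau|\lesssim\tau^{-k/2}$ and the cancellations $\int\nabla^k p_\tau=0$ for $k\ge1$, which keep both the polynomial weights and the singular kernels integrable in $s$ since $\int_0^t(\sigma^2(t-s))^{\alpha/2-1}\,ds<\infty$ for all $\alpha\in(0,2]$. Writing $f:=V\psi$, which is continuous with $|f(t,x)|\le C_T(1+|x|^2)$: differentiating the mild formula once in $x$ shows that $\nabla\psi$ exists, is jointly continuous, and --- using $\nabla P_{\sigma^2 t}\psi_0=P_{\sigma^2 t}\nabla\psi_0$, $\|\psi\|_\infty\le C_T$ and $\int|\nabla p_\tau(z)|(1+|x-z|^2)\,dz\le C\tau^{-1/2}(1+|x|^2)$ --- satisfies $|\nabla\psi(t,x)|\le C_T(1+|x|^2)$, which is exactly the asserted gradient bound. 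In particular $\psi\in C^1$ in $x$, hence $u=-2\log\psi\in C^1$ and $f$ is locally Lipschitz (a fortiori $\alpha$-H\"older) in $x$, uniformly in $t\le T$; then
\[
\partial_i\partial_j\!\int_0^t P_{\sigma^2(t-s)}f(s,\cdot)(x)\,ds=\int_0^t\!\!\int\partial_i\partial_j p_{\sigma^2(t-s)}(x-y)\big(f(s,y)-f(s,x)\big)\,dy\,ds
\]
gives $\nabla^2\psi$ continuous, hence $u\in C^2$ in $x$, hence $V\in C^2$ in $x$ (here $g,G(m_t,\cdot)\in C^2$ by \Cref{assum:decomposition-F-derivative}), hence $f\in C^1$ with locally H\"older gradient; one more iteration --- moving one derivative onto $f$ before applying the same kernel identity --- produces $\nabla^3\psi$ continuous. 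These estimates are valid for all $t\in(0,T]$, the contribution $P_{\sigma^2 t}\psi_0$ being smooth in $x$ for $t>0$ and $\psi_0$ having locally Lipschitz gradient. Finally, from $\partial_t\psi=\tfrac{\sigma^2}{2}\Delta\psi-V\psi$ and $\partial_t\nabla\psi=\tfrac{\sigma^2}{2}\Delta\nabla\psi-(\nabla V)\psi-V\nabla\psi$, whose right-hand sides are jointly continuous --- using $t$-continuity of $\nabla\frac{\delta F}{\delta p}(m_t,\cdot)$ from \Cref{assum:decomposition-F-derivative}~(iii) --- we conclude that $\partial_t\psi$ and $\partial_t\nabla\psi$ exist and are continuous, so $\psi\in C^3(Q_T)$ in the sense of the paper. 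Uniqueness of the classical solution then follows because any classical solution with the above growth is a viscosity solution, and viscosity solutions are unique.

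The main obstacle is that $V$ has quadratic growth and is only continuous --- not H\"older --- in time, so standard parabolic Schauder theory does not apply off the shelf. The growth is absorbed by the weighted Gaussian kernel estimates above (the heat kernel handles polynomial weights at the expected cost in powers of $\tau$). The missing time regularity is circumvented by the observation that extracting \emph{spatial} derivatives of the Duhamel integral requires only H\"older continuity of $f$ in $x$, uniformly in $t$, and never H\"older-in-$t$; the derivatives in $t$ are then read off a posteriori from the (differentiated) equation itself, which is the only place where differentiability in time is needed.
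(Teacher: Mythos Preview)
Your proposal is correct and follows essentially the same strategy as the paper: pass to the mild/Duhamel representation of \(\psi\), differentiate it, and bootstrap spatial regularity before reading off the time derivatives from the equation. The paper simply phrases each step probabilistically---it derives the Duhamel formula \((\ref{eq:dumel})\) from the martingale property of the Feynman--Kac representation, and it differentiates via the Bismut--Elworthy--Li identity \(\nabla P_\tau f(x)=\mathbb E[f(x+\sigma W_\tau)\,\sigma^{-1}W_\tau/\tau]\) rather than through the heat-kernel gradient bound \(\int|\nabla p_\tau|\lesssim\tau^{-1/2}\); these are the same computation in different clothing.

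The only substantive difference is in how the second and third spatial derivatives are extracted. The paper, after obtaining \(|\nabla\psi|\le C_T(1+|x|^2)\) and hence the crude bound \(|\nabla u|\le C_T(1+|x|^2)e^{C_T|x|^2}\), restarts the Duhamel formula from a time \(r<t\) close to \(t\) so that the short Brownian increment absorbs the exponential weight. You instead use the cancellation \(\int\partial_i\partial_j p_\tau=0\) and subtract \(f(s,x)\) before differentiating, which reduces the requirement to local H\"older regularity of \(f\) (respectively \(\nabla f\)) in \(x\); for \(|z|\) small this uses only a fixed-\(x\) local Lipschitz constant, while for \(|z|\) large the polynomial growth of \(f\) and \(\nabla f\) (note \((\nabla V)\psi=\tfrac12(\nabla\tfrac{\delta F}{\delta p})\psi+\gamma\nabla\psi\), so \(\nabla f\) has polynomial growth even when \(\gamma>0\)) is enough. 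Both devices circumvent the lack of time-H\"older regularity of \(V\) that blocks off-the-shelf Schauder theory, and both yield \(\psi\in C^3(Q_T)\) with the stated gradient bound.
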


\begin{proof}
It follows from \Cref{lem:feynmankac psi} that
 \[\left(e^{-\frac12 \int_0^{s}\left(\frac{\d F}{\d p}(m_{t-r}, x+\si W_r)-\gamma u(t-r, x+\si W_r) \right) dr}\psi(t-s, x+\si W_s)\right)_{s\in [0,t]}\] 
 is a continuous martingale. By 
It\^o's formula, we have for all $0\le r\le t$ that
\begin{multline}\label{eq:dumel}
 \psi (t,x) = \dbE\Big[\psi(r, x+\si W_{t-r})  \\
  - \frac12\int_0^{t-r} \Big( \frac{\d F}{\d p}(m_{t-s}, x+\si W_s) -\gamma u(t-s, x+\si W_s) \Big)\psi (t-s,x+\si W_s) ds\Big].
\end{multline}
Recall that $|\frac{\d F}{\d p}(m_t, x)| + |u(t,x)| \le C_T(1+|x|^2)$ on $[0,T]\times\dbR^d$, so for all $t\le T$ we have
\[ \int_0^t \dbE\left[\Big| \Big( \frac{\d F}{\d p}(m_{t-s}, x+\si W_s) - \gamma u(t-s, x+\si W_s)\Big) \psi (t-s,x+\si W_s) \frac{W_s}{\sigma s} \Big|\right] ds <\infty. \]
As a result $\nabla \psi$ exists and is equal to
\begin{multline*}
 \nabla \psi (t,x) = \dbE\Big[ \nabla \psi_0(x+ \si W_t)\\
  - \frac12\int_0^t \Big(\frac{\d F}{\d p}(m_{t-s},x+ \si W_s ) - \gamma u(t-s, x+\si W_s)\Big)\psi (t-s,x+ \si W_s) \frac{W_s}{\si s} ds  \Big].
\end{multline*}
Therefore we obtain $|\nabla \psi(t,x)| \le C_T(1+|x|^2) $, and
\[|\nabla u (t,x)|= 2\left|\frac{\nabla \psi (t,x)}{ \psi(t,x)}\right|\le C_T(1+|x|^2)e^{C_T(1+|x|^2)}.\]
In particular we have $\dbE\big[|\nabla u(t, x+\si W_s)|^2\big]<\infty$ for $s$ small enough. So for $r<t$ and $r$ close enough to $t$ we have
\begin{multline*}
 \nabla \psi (t,x) = \dbE\Big[ \nabla \psi(r, x+ \si W_{t-r}) \\
 -\frac12\int_0^{t-r} \Big(\nabla\frac{\d F}{\d p}(m_{t-s},x+ \si W_s)
 - \gamma \nabla u (t-s,x+ \si W_s) \Big) \psi (t-s,x+ \si W_s)ds   \Big].
\end{multline*}
Further note that
\[\int_0^{t-r} \dbE\left[\Big| \Big(\nabla\frac{\d F}{\d p}(m_{t-s},x+ \si W_s)
 - \gamma \nabla u (t-s,x+ \si W_s) \Big) \psi (t-s,x+ \si W_s) \frac{W_s}{\si s} \Big|\right] ds <\infty, \]
So $\nabla^2  \psi $ exist and is equal to
\begin{multline*}
 \nabla^2 \psi (t,x) = \dbE\Big[ \nabla\psi(r, x+ \si W_{t-r}) \frac{W_{t-r}}{\si (t-r)} \\
 - \frac12\int_0^{t-r} \Big(\nabla\frac{\d F}{\d p}(m_{t-s},x+ \si W_s)
 - \gamma \nabla u (t-s,x+ \si W_s) \Big) \psi (t-s,x+ \si W_s) \frac{W_s}{\si s} ds \Big].
\end{multline*}
Further, in order to compute the time partial derivative, recall \cref{eq:dumel}.
Since we have already proved that $x\mapsto \psi(t, x)$ belongs to $C^2$, it follows from It\^o's formula that
\begin{multline*}
 \psi (t,x) - \psi(r, x)= \dbE\Big[ \int_{0}^{t-r} \Big(\frac{\si^2}{2} \Delta \psi(r,x+ \si W_s)\\
-\frac12\Big(\frac{\d F}{\d p}(m_{t-s}, x+ \si W_s) -\gamma u(t-s, x + \si W_s)\Big)
 \psi (t-s,x+ \si W_s)\Big) ds \Big].
\end{multline*}
Then clearly $\partial_t \psi$ exists and $\psi$ satisfies \cref{eq:Schr\"odinger}. in the classical sense. Moreover, using the same argument, we can easily show that $\nabla^3 \psi$ and $\partial_t \nabla \psi$ exist and are continuous on $Q_T$.
\end{proof}

\subsection{Gaussian Bounds}
\label{sec:gaussian}

The aim of this section is to establish a technical result which ensures that if a family of probability distributions writes as the exponential of a sum of a Lipschitz and a convex function then it admits uniform Gaussian bounds.

\begin{lem}\label{lem:property-p-app}
Let $\cT$ be an index set. We assume that the family of probability measures $p_t = e^{-(v_t+w_t)},\, t\in\cT$, satisfies the following conditions:
\begin{itemize}
 \item[\rm (i)] For some $\ol\eta>\ul\eta>0,$ it holds $\ul\eta I_d \le \nabla^2 v_t \le \ol\eta I_d$ for all $t\in \cT$.
 \item[\rm (ii)] The vectors $\nabla v_t(0)$ are uniformly bounded, \ie, $\sup_{t\in \cT} |\nabla v_t(0)| <\infty $.
 \item[\rm (iii)] The gradients $\nabla w_t$ are uniformly bounded, \ie, $\sup_{t\in \cT} \|\nabla w_t\|_\infty <\infty $.
\end{itemize}
Then there exist $\ul c, \ol c, \ul C, \ol C>0,$ such that for all $t\in\cT,$ $x\in\dbR^d,$
\begin{equation*}
  \ul C e^{- \ul c |x|^2} \le  p_t(x) \le \ol C e^{- \ol c |x|^2} 
\end{equation*}
\end{lem}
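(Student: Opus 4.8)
The plan is to turn the structural hypotheses into quadratic envelopes for the exponent $v_t+w_t$ and then to pin down the remaining additive constant using the normalization $\int_{\dbR^d} p_t\,dx = 1$.

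First I would record the standard two-sided Taylor bound coming from (i): since $v_t\in C^2$ with $cI_d\le\nabla^2 v_t\le CI_d$, for every $x\in\dbR^d$
\[
v_t(0)+\nabla v_t(0)\cdot x+\tfrac{c}{2}|x|^2 \ \le\ v_t(x)\ \le\ v_t(0)+\nabla v_t(0)\cdot x+\tfrac{C}{2}|x|^2 .
\]
From (iii), writing $M:=\sup_{t}\|\nabla w_t\|_\infty<\infty$, we have $|w_t(x)-w_t(0)|\le M|x|$ for all $x$. Setting $a_t:=v_t(0)+w_t(0)=-\log p_t(0)$ and $A:=M+\sup_t|\nabla v_t(0)|<\infty$ (finite by (ii)), these combine to
\[
e^{-a_t}\,e^{-A|x|-\frac{C}{2}|x|^2}\ \le\ p_t(x)\ \le\ e^{-a_t}\,e^{A|x|-\frac{c}{2}|x|^2},\qquad x\in\dbR^d .
\]

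Next I would control $p_t(0)=e^{-a_t}$ uniformly in $t\in\cT$. Integrating the upper bound over $\dbR^d$ and using $\int_{\dbR^d}p_t\,dx=1$ gives $1\le e^{-a_t}\kappa_1$ with $\kappa_1:=\int_{\dbR^d}e^{A|x|-\frac{c}{2}|x|^2}\,dx<\infty$ depending only on $A,c,d$; hence $p_t(0)\ge\kappa_1^{-1}$. Integrating the lower bound gives $1\ge e^{-a_t}\kappa_2$ with $\kappa_2:=\int_{\dbR^d}e^{-A|x|-\frac{C}{2}|x|^2}\,dx>0$; hence $p_t(0)\le\kappa_2^{-1}$. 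Substituting this two-sided bound on $p_t(0)$ back into the displayed inequalities and absorbing the linear term by Young's inequality (for instance $A|x|\le\frac{c}{4}|x|^2+\frac{A^2}{c}$ for the upper estimate and $-A|x|\ge-\frac12|x|^2-\frac{A^2}{2}$ for the lower one) yields $\ul C e^{-\ul c|x|^2}\le p_t(x)\le\ol C e^{-\ol c|x|^2}$ with $\ul c,\ol c,\ul C,\ol C>0$ depending only on $c,C,A,d$, which is the claim.

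I do not expect a genuine obstacle here: the single point requiring care is that the normalizing constant $a_t$ must not degenerate as $t$ ranges over $\cT$, and this is exactly guaranteed by the finiteness of $\kappa_1$ and the positivity of $\kappa_2$, i.e. by the integrability of the Gaussian-type envelopes. Everything else is the elementary calculus of strongly convex functions with Lipschitz gradient.
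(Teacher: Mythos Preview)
Your proof is correct and follows the same underlying idea as the paper's: exploit the two-sided Hessian bound on $v_t$ to get quadratic envelopes, use the Lipschitz bound on $w_t$ to control the perturbation, and use the normalization $\int p_t=1$ to pin down the free additive constant.

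The organization differs slightly. You expand $v_t$ directly around the origin, absorbing $\nabla v_t(0)\cdot x$ into the linear term $A|x|$ via Cauchy--Schwarz, and then use normalization to bound $p_t(0)$. The paper instead first locates the unique minimizer $x_t$ of $v_t$, shows $\sup_t|x_t|<\infty$ from (i)--(ii), and then expands around $x_t$ so that the linear term in $v_t$ vanishes; it also separates $p_t=q_tr_t$ into a purely convex part and a Lipschitz part and bounds each factor. Your route is a bit more streamlined (no need to introduce $x_t$ or the factorization), while the paper's decomposition makes the respective roles of $v_t$ and $w_t$ more transparent. Both arguments are elementary and yield constants depending only on $c,C,\sup_t|\nabla v_t(0)|,\sup_t\|\nabla w_t\|_\infty,d$.
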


\begin{proof}
We decompose the probability measure \(p_t = q_tr_t\) with \(q_t = e^{- v_t} / \int e^{- v_t} \) and \(r_t = e^{- w_t} / \int e^{- w_t} q_t\).

\noindent{\rm (i).}\quad We first derive some estimates on \(v\) and the corresponding measure \(q\). From Assumption~(i), the following inequalities holds
\[
 \left| \nabla v_t (x) - \nabla v_t (0) \right| \left| x \right|\geq \left( \nabla v_t (x) - \nabla v_t(0) \right) \cdot x \geq \ul\eta \left| x\right|^2
\]
For each \(t \in \cT \), let \(x_t\) be the unique solution to \(\nabla v_t \left(x\right) = 0\), \ie, $x_t$ is the minimizer of \(v\). Plugging \(x_t\) in the inequality above, we obtain
\(\left|\nabla v_t \left( 0 \right) \right| \left| x_t\right| \geq c \left| x_t\right|^2\). Thus, in view of Assumption~(ii),  \(x_t\) is bounded in \(\mathbb R^d\), \ie, 
\begin{equation}\label{eq:x_t}
 \sup_{t \in \cT} \left|x_t\right| < +\infty.
\end{equation}
Denote \(\tilde v_t \left(x\right) = v_t \left(x\right) - v_t \left(x_t\right)\).We have by definition \(q_t = e^{- \tilde{v}_t} / \int e^{- \tilde{v}_t}\) and \(\tilde v_t \left(x_t\right) = 0\) as well as \(\nabla \tilde v_t \left(x_t\right) = 0\).  It follows from Taylor expansion that
\begin{equation*}
 \frac 12 \ul\eta \left| x - x_t\right|^2 \leq \tilde v_t \left(x\right) \leq \frac 12 \ol\eta \left| x - x_t\right|^2, 
\end{equation*}
so that 
\begin{equation}\label{eq:q_t}
\left( \frac{\ul\eta }{2 \pi} \right)^{d/2} \exp \left( - \frac{\ol\eta}{2} \left| x - x_t \right|^2 \right) \leq q_t \leq \left( \frac{\ol\eta}{2 \pi} \right)^{d/2} \exp \left( - \frac{\ul\eta}{2} \left| x - x_t \right|^2 \right).
\end{equation}

\noindent{\rm (ii).}\quad Now we estimate the function \(r\). Denote \(\tilde w_t \left(x\right) = w_t \left(x\right) - w_t \left(x_t\right)\). We have by definition \(r_t = e^{- \tilde w_t} / \int e^{- \tilde w_t} q_t\) and \(\tilde w_t \left(x_t\right) = 0\). Thanks to Assumption~(iii), we know that \(\nabla w_t = \nabla \tilde w_t\) is uniformly bounded by some constant, denoted \(L\). Therefore it holds
\begin{equation*}
 -L \left| x - x_t\right| \leq \tilde{w}_t \left(x \right)  \leq L \left| x - x_t\right|.
\end{equation*}
In particular, in view of \cref{eq:q_t} and \cref{eq:x_t}, it holds for some $\ul C, \ol C>0,$
\begin{equation}\label{eq:r_t}
 \ul C \exp \left( -L \left| x - x_t\right| \right) \leq r_t  \leq \ol C \exp \left( L \left| x - x_t\right| \right).
\end{equation}
\noindent{\rm (iii).}\quad Since $p_t=q_t r_t,$ the conclusion follows immediately from \cref{eq:x_t}, \cref{eq:q_t} and \cref{eq:r_t}.
\end{proof}

\subsection{Reflection Coupling}\label{sec:reflectioncoupling}

In the section we recall the reflection coupling technique developped in \cite{eberle11, eberle2019quantitative}
and use it to estimate the $\cW_1$--distance between the marginal laws of two diffusion processes with drift $b$ and $b+\delta b.$

\begin{assum}\label{assum:reflectioncoupling}
The drifts $b$ and $\delta b$ satisfy
\begin{itemize}
 \item[(i)] $b$ and $\delta b$ are Lipschitz in $x$,  \ie, there is a constant $L> 0$ such that
 \[|b(t,x) -b(t,y)| + |\delta b(t,x) - \delta b(t,y)|  \le L|x-y|, \quad\mbox{for all $t\in [0,T],$ $x,y\in\dbR^d$}; \]

 \item[(ii)] there exists a continuous function $\kappa:(0,\infty)\rightarrow \dbR$ such that $\limsup_{r\rightarrow\infty} \kappa(r) <0,$ $\int_0^1 r \kappa^+(r) dr<\infty$ and
 \[(x-y)\cdot \big(b(t,x) -b(t,y)\big) \le \kappa(|x-y|) |x-y|^2, \quad\mbox{for all $t\in [0,T],$ $ x,y\in\dbR^d.$}\]

\end{itemize}
\end{assum}

\begin{rem}\label{rem:decom-drift}
If $b(t,x) = -(\alpha(t,x) + \nabla \beta(t,x))$ with $\alpha$ bounded and $\beta$ $\eta$--convex in $x$, \ie,
\begin{equation*}
 \left(\nabla \beta(t,x) - \nabla \beta(t,y)\right)\cdot \left(x-y\right) \geq \eta \left|x-y\right|^2,
\end{equation*}
then the function $b$ satisfies \Cref{assum:reflectioncoupling} (ii) with $\kappa(r)=\frac{2 \|\alpha\|_{\infty}}{r} - \eta.$
\end{rem}

\begin{thm}\label{thm:reflectioncoupling}
Let \Cref{assum:reflectioncoupling} hold. Consider the following two diffusion processes
\begin{equation*}
 dX_t = b(t, X_t) dt + \si dW_t,\qquad dY_t = (b + \d b)(t, Y_t) dt +\si dW_t,
\end{equation*}
and denote their marginal distributions by $p^X_t :=\cL(X_t)$ and $p^Y_t := \cL(Y_t)$. Then we have
\begin{equation}\label{eq:coupling_contraction}
 \cW_1(p^X_t, p^Y_t) \le C e^{- c \si^2 t} \Big(\cW_1(p^X_0, p^Y_0)+\int_0^t e^{c \si^2 s}\dbE\big[|\d b(s,Y_s)|\big] ds \Big) , \quad\mbox{for all $t\ge 0$},
\end{equation}
where the constants $C$ and $c$ only depend on the function $\kappa(\cdot)/\si^2$.
\end{thm}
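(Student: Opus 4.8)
The plan is to realize the two diffusions on a common probability space by Eberle's reflection coupling and to track the scalar distance process, carrying the perturbation $\delta b$ through a variation--of--constants estimate. First I would fix an optimal $\cW_1$--coupling $(X_0,Y_0)$ of $p^X_0$ and $p^Y_0$, so that $\dbE\big[|X_0-Y_0|\big]=\cW_1(p^X_0,p^Y_0)$, and then build $(X,Y)$ via
\[
 dX_t = b(t,X_t)\,dt + \sigma\,dW_t, \qquad dY_t = \big(b+\delta b\big)(t,Y_t)\,dt + \sigma\big(I_d - 2\,e_t e_t^{\top}\big)\,dW_t,
\]
where $e_t := (X_t-Y_t)/|X_t-Y_t|$ while $X_t\neq Y_t$ and the two processes are driven synchronously ($e_t := 0$) once they meet. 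Under \Cref{assum:reflectioncoupling}~(i) the drift coefficients are Lipschitz, which gives pathwise uniqueness and hence well-posedness of the coupled system away from the meeting time; the passage through the meeting time is treated exactly as in \cite{eberle11, eberle2019quantitative}.

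The next step is to compute the dynamics of $r_t := |X_t-Y_t|$. Because the reflected noise acts only in the radial direction with coefficient $2\sigma$, Itô's formula gives, for $r_t>0$ and a one--dimensional Brownian motion $B$,
\[
 dr_t = e_t\cdot\big(b(t,X_t)-b(t,Y_t)\big)\,dt - e_t\cdot\delta b(t,Y_t)\,dt + 2\sigma\,dB_t,
\]
with vanishing Itô correction. By \Cref{assum:reflectioncoupling}~(ii) and Cauchy--Schwarz this yields the comparison $dr_t \le \big(\kappa(r_t)\,r_t + |\delta b(t,Y_t)|\big)\,dt + 2\sigma\,dB_t$. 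I would then introduce Eberle's metric function: using that $\int_0^1 r\kappa^+(r)\,dr<\infty$ and $\limsup_{r\to\infty}\kappa(r)<0$, one constructs from $\kappa(\cdot)/\sigma^2$ a concave, non-decreasing, piecewise--$C^2$ function $f:[0,\infty)\to[0,\infty)$ with $f(0)=0$, $f'\in[\phi_0,1]$ for some $\phi_0\in(0,1)$, and a constant $c>0$ such that $2\sigma^2 f''(r)+\kappa(r)\,r\,f'(r)\le -c\,\sigma^2 f(r)$ for $r>0$; here $c$ and $\phi_0$ depend only on $\kappa(\cdot)/\sigma^2$.

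Applying Itô's formula to $e^{c\sigma^2 t}f(r_t)$, using $f'\ge 0$ together with the comparison for $dr_t$, the quadratic variation term $2\sigma^2 f''(r_t)$, and the defining inequality of $f$, the finite--variation part is bounded by $e^{c\sigma^2 t}f'(r_t)|\delta b(t,Y_t)|\le e^{c\sigma^2 t}|\delta b(t,Y_t)|$, while the stochastic part is a martingale after a routine localization justified by the moment bounds implied by \Cref{assum:reflectioncoupling}~(i). Taking expectations gives
\[
 \dbE\big[e^{c\sigma^2 t}f(r_t)\big]\le \dbE\big[f(r_0)\big] + \int_0^t e^{c\sigma^2 s}\,\dbE\big[|\delta b(s,Y_s)|\big]\,ds.
\]
Using $\phi_0 r\le f(r)\le r$ on both sides, the bound $\cW_1(p^X_t,p^Y_t)\le\dbE[r_t]$ (since $(X_t,Y_t)$ is a coupling of the marginals), $\dbE[r_0]=\cW_1(p^X_0,p^Y_0)$, and the fact that $\dbE[|\delta b(s,Y_s)|]$ depends only on $p^Y_s$, I obtain \eqref{eq:coupling_contraction} with $\ell=1/\phi_0$.

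The main obstacle is the rigorous construction of the coupled system rather than the estimate itself: the reflection direction $e_t$ is discontinuous at the meeting time and, since the two drifts differ by $\delta b$, the processes need not stay glued afterwards, so one must verify that $r$ does not accumulate local time at the origin in a harmful way and that the comparison SDE for $r_t$ remains valid across excursions away from $0$. This is precisely what the coupling constructions of \cite{eberle11, eberle2019quantitative} provide, so I would invoke them; the only additional bookkeeping compared with the unperturbed setting is to retain the term $|\delta b(t,Y_t)|\,dt$ throughout, which is harmless once the metric $f$ has been fixed.
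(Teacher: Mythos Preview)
Your strategy---reflection coupling, Eberle's concave distance function $f$ built from $\kappa/\sigma^2$, It\^o on $e^{c\sigma^2 t}f(r_t)$, and carrying the extra drift as $|\delta b(t,Y_t)|\,dt$ through a variation--of--constants bound---is exactly the paper's. The one place where the paper is more careful is the point you flag as the ``main obstacle''. Your prescription ``drive synchronously once they meet'' does not work here: since the drifts differ by $\delta b$, the processes separate again after meeting, and on those subsequent excursions you would no longer have the $2\sigma^2 f''(r_t)$ term (no reflection), so the key differential inequality $2\sigma^2 f''+ \kappa^+ r f' \le -c\sigma^2 f$ cannot be invoked and the contraction fails. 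The paper handles this not by a sticky coupling but by the regularized reflection--synchronous coupling of \cite{eberle2019quantitative}: one chooses Lipschitz $\rc,\syc$ with $\rc^2+\syc^2=1$, $\rc=1$ for $|x-y|>\eta$ and $\rc=0$ for $|x-y|\le\eta/2$, so the coupled SDE has Lipschitz coefficients and is globally well posed; one then gets an extra error term of size $O(\eta)$ in the It\^o--Tanaka estimate and lets $\eta\to 0$ at the very end. If you replace your informal treatment of the meeting time by this regularization (or simply quote it from \cite{eberle2019quantitative}), your argument coincides with the paper's.
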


\begin{rem}\label{rem:ergodicity}
It follows immediately from \Cref{thm:reflectioncoupling} that if $\pi$ is an invariant distribution of the process $X$ then
\begin{equation*}
 \cW_1(p^X_t, \pi) \le C e^{- c \si^2 t} \cW_1(p^X_0,\pi), \quad\mbox{for all $t\ge 0$}.
\end{equation*}
In particular, it is unique and it is the limiting distribution of $X.$
\end{rem}

\begin{proof}
We first recall the reflection-synchronuous coupling introduced in \cite{eberle2019quantitative}. Introduce Lipschitz functions ${\rm rc}:\dbR^d\times\dbR^d\mapsto [0,1]$ and ${\rm sc}:\dbR^d\times\dbR^d\mapsto [0,1]$ satisfying
\[{\rm rc}^2(x,y) + {\rm sc}^2 (x,y) =1.\]
Fix a small constant $\eta>0$. We impose that $\rc(x,y)=1$ whenever $|x-y|>\eta$ and $\rc(x,y)=0$ if $|x-y|\le \eta/2$.
The so-called reflection-synchronuous coupling is the strong solution to the following SDE system:
\begin{align*}
 dX_t &= b(t, X_t) dt + \rc(X_t, Y_t)\si dW^1_t + \syc(X_t, Y_t) \si dW^2_t, \\
 dY_t &= (b + \d b)(t, Y_t) dt + \rc(X_t, Y_t) (I - 2 e_t \langle e_t, \cdot \rangle )\si dW^1_t + \syc(X_t, Y_t)\si dW^2_t ,
\end{align*}
where $W^1, W^2$ are $d$-dimensional independent standard Brownian motion and
\[e_t = \frac{X_t-Y_t}{|X_t-Y_t|}\quad\mbox{for $X_t \neq Y_t$, \quad and}\quad e_t = u \quad\mbox{for $X_t = Y_t,$}\]
with $u\in \dbR^d$ a fixed arbitrary unit vector. We denote by $\rc_t := \rc(X_t, Y_t)$ and define $r_t : = |X_t-Y_t|$. Observe that
\[dr_t = \langle e_t, b(t,X_t)- b(t, Y_t) -\d b(t, Y_t)\rangle dt +2 \rc_t \si dW^\circ_t,\]
where $W^\circ$ is a one-dimensional standard Brownian motion, see \cite[Lemma 6.2]{eberle11}.

Next we construct an important auxiliary function $f$ as in \cite[Section 5.3]{eberle2019quantitative}. First define two constants:
\begin{align*}
 R_1 & = \inf\{R\ge 0:~ \kappa(r)\le 0,~\mbox{for all $r\ge R$} \},\\
 R_2 & = \inf\{R\ge R_1 : ~ \kappa(r) R(R-R_1)\le -4\si^2 , ~\mbox{for all $r\ge R$}\}.
\end{align*}
Further define
\[\f(r) = e^{-\frac{1}{2 \sigma^2}\int_0^r u\kappa^+(u)du}, \quad \Phi(r) = \int_0^r \f(u)du, \quad g(r) = 1-\frac{c}{2}\int_0^r \Phi(u) \f(u)^{-1}du,\]
where the constant $c=\left(\int_0^{R_2} \Phi(r)\f(r)^{-1} dr\right)^{-1}$, and eventually define the auxiliary function
\[f(r) = \int_0^r \f(u) g(u\wedge R_2)du.\]
One easily checks that 
\begin{equation*}
 r\f(R_1) \le \Phi(r)  \le 2f(r) \le 2\Phi(r) \le 2r, \quad\mbox{for all $r>0$}.
\end{equation*}
Note also that $f$ is increasing and concave. In addition, $f$ is linear on $[R_2,+\infty)$, twice continuously differentiable on $(0, R_2)$ and satisfies
\begin{equation}\label{eq:fsubODE}
 2\si^2f''(r) \le -r\kappa^+(r) f'(r) -c\si^2 f(r), \quad\mbox{for all $r\in (0,\infty) \backslash \{R_2\}.$}
\end{equation}
This inequality follows easily by direct computation on $[0,R_2)$ and we refer to \cite[Eqn (5.32)]{eberle2019quantitative} for a detailed justification on $(R_2,+\infty)$.
Then we have by It\^o--Tanaka formula as in \cite[Eqn~(5.26)]{eberle2019quantitative} that
\[d f(r_t) \le \left(f'_{-}(r_t)\langle e_t, b(t, X_t)-b(t, Y_t) - \d b(t, Y_t)\rangle +2 \sigma^2 \rc_t^2 f''(r_t) \right) dt +2 \rc_t f'_{-}(r_t)\si dW^\circ_t.\]
Further note that 
\[\langle e_t, b(t, X_t)-b(t, Y_t)\rangle
\le 1_{r_t<\eta} |b|_{Lip} \eta +1_{r_t\ge \eta} r_t \kappa^+(r_t).\]
Together with the fact that $f'\le 1,$ $f''\le 0$ and $\rc_t 1_{r_t\ge \eta}= 1,$ we deduce that
\begin{multline*}
 d e^{c\si^2 t}f(r_t)
\le e^{c\si^2 t} \Big( 2\rc_t f'_{-}(r_t)\si dW^\circ_t + |\d b(t,Y_t)| dt + 1_{r_t<\eta} (c\si^2 f(r_t) +|b|_{Lip} \eta)dt\\
 + 1_{r_t\ge \eta} \big(c\si^2 f(r_t) +r_t \kappa^+(r_t) f'(r_t) +2\si^2 f''(r_t) \big)dt \Big).
\end{multline*}
It follows from \cref{eq:fsubODE} that
\[d e^{c\si^2 t}f(r_t)
\le e^{c\si^2 t} \left( 2\rc_t f'_{-}(r_t)\si dW^\circ_t + \Big(|\d b(t,Y_t)|+ (c\si^2 +|b|_{Lip}) \eta\Big) dt\right).\]
Taking expectation on both sides, we obtain
\[ \dbE[e^{c\si^2 t} f(r_t) - f(r_0)] \le \int_0^t e^{c\si^2 s} \Big( \dbE \big[ |\d b(s,Y_s)| \big] + (c\si^2 +|b|_{Lip}) \eta\Big) ds. \]
Again due to the construction of $f$ we have
\begin{multline*}
 \cW_1(p^X_t, p^Y_t) \le \dbE[r_t] \le 2 \f(R_1)^{-1}\dbE[f(r_t)]
\\
\begin{aligned}
 & \le 2 \f(R_1)^{-1} e^{ - c\si^2 t}\left(\dbE[f(r_0)] +\int_0^t e^{c\si^2 s} \Big( \dbE \big[ |\d b(s,Y_s)| \big] + (c\si^2 +|b|_{Lip}) \eta\Big) ds \right)\\
 & \le 2  \f(R_1)^{-1} e^{ - c\si^2 t}\left(\cW_1(p^X_0, p^Y_0) +\int_0^t e^{c\si^2 s} \Big( \dbE \big[ |\d b(s,Y_s)| \big] + (c\si^2 +|b|_{Lip}) \eta\Big) ds \right).
\end{aligned}
\end{multline*}
By passing to the limit $\eta\rightarrow 0$, we finally obtain the estimate \eqref{eq:coupling_contraction}.
\end{proof}

\bibliography{references}
\bibliographystyle{plain}

\end{document}